\newcommand\blfootnote[1]{%
  \begingroup
  \renewcommand\thefootnote{}\footnote{#1}%
  \addtocounter{footnote}{-1}%
  \endgroup
}
\theoremstyle{plain}
\newtheorem{theorem}{Theorem}[section]
\newtheorem{corollary}[theorem]{Corollary}
\newtheorem{lemma}[theorem]{Lemma}
\newtheorem{notation}[theorem]{Notation}
\theoremstyle{definition}
\newtheorem{definition}[theorem]{Definition}
\newtheorem{example}[theorem]{Example}
\theoremstyle{remark}
\newtheorem{remark}[theorem]{\bf Remark}
\begin{document}	     	
\author{
  Ambily A. A.\\
  \text{Department of Mathematics},\\ 
  \textit{Cochin University of Science and Technology},\\
  \textit{Kochi, Kerala, India, 682022}\\
  \texttt{ambily@cusat.ac.in}\\
  \and
  Gayathry Pradeep\footnote{Corresponding author}\\
  \text{Department of Mathematics},\\
  \textit{Cochin University of Science and Technology},\\
  \textit{Kochi, Kerala, India, 682022}\\
  \texttt{gayathrypradeep221b@gmail.com}\\}
  \title{Solvability of the ${\rm SK_1}$-analog of the orthogonal groups}

\maketitle     	
\begin{abstract}
We prove the dilation principle for the relative Dickson-Siegel-Eichler-Roy (DSER) elementary orthogonal group and using the dilation principle we prove the Quillen's analog of the local-global principle for the group. Applying the relative local-global principle, we prove the solvability and nilpotency of the ${\rm SK_1}$-analog of the orthogonal groups and study the homotopy and commutativity principle for odd elementary orthogonal groups.
\end{abstract}

\blfootnote{{\it Keywords:} Quadratic modules, Orthogonal groups, DSER elementary orthogonal group, Local-global principle, Solvable groups}
 
\blfootnote{{\it 2020 Mathematics Subject Classification:}19G99, 13C10, 11E70, 20H25, 13H99, 20F16}

    \date{\today}
	
	\setstcolor{red}

    \section{Introduction}
	The work of A. A. Suslin in \cite{Suslin1977} established the significant result on the normality of the elementary linear group ${\rm E}_n(R)$ in the general linear group ${\rm GL}_n(R)$, for $n\geq 3$. This led to the definition of the quotient group ${\rm K}_{1,n}(R)$ as $\frac{{\rm GL}_n(R)}{{\rm E}_n(R)}$. As the derived subgroup gives an insight into the commutativity of the given group, the study of the derived series and the solvability of the given group are of equal importance as that of the abelian groups. In \cite{Bak1991,vanderKallen1989}, A. Bak and W. van der Kallen investigated the solvability and nilpotency of ${\rm K}_{1,n}(R)$ and A. Bak used the localization-completion method to prove the following remarkable theorem in \cite{Bak1991}. 

\begin{theorem}[\cite{Bak1991}]
Let $R$ be a commutative ring with finite Bass-Serre dimension $\delta(R)$, and $S$ be an associative quasi-finite $R$-algebra. Then the group ${\rm K}_{1,n}(S)$ with $n\geq 3$ is nilpotent by abelian and the nilpotency class is at most $\delta(R)+2-n$.
\end{theorem}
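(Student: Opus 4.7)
The plan is to follow Bak's localization-completion strategy and do induction on the Bass-Serre dimension $\delta = \delta(R)$, with $n \geq 3$ fixed. Writing $c = \delta + 2 - n$, the target is to show that the derived subgroup of ${\rm K}_{1,n}(S)$ is nilpotent of class $\leq c$. Before starting the induction one sets up the relative framework: for every two-sided ideal $I \trianglelefteq S$, introduce the relative groups ${\rm E}_n(S,I)$ and ${\rm GL}_n(S,I)$ and record the basic commutator identities $[{\rm E}_n(S), {\rm GL}_n(S,I)] \subseteq {\rm E}_n(S,I)$ and $[{\rm GL}_n(S,I), {\rm GL}_n(S,J)] \subseteq {\rm E}_n(S, I \cap J)$ for $n \geq 3$. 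Suslin's normality theorem then makes the quotient ${\rm K}_{1,n}(S)$ group-theoretically accessible, and every formal manipulation below can be carried out modulo an appropriate elementary subgroup.

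The heart of the argument is the localization-completion (Milnor) square
\[
\begin{tikzcd}
S \arrow[r] \arrow[d] & S_s \arrow[d] \\
\widehat{S} \arrow[r] & (\widehat{S})_s
\end{tikzcd}
\]
where $s \in R$ is a non-zero-divisor chosen so that $\delta(R_s) < \delta(R)$, and $\widehat{S}$ denotes the $s$-adic completion. The induced Mayer--Vietoris patching lets one express any $g \in {\rm GL}_n(S)$, modulo ${\rm E}_n(S)$, as a product of lifts coming from ${\rm GL}_n(S_s)$ and ${\rm GL}_n(\widehat{S})$. On the $S_s$ factor the induction hypothesis gives a nilpotency class $\leq c-1$ for the derived subgroup; on the $\widehat{S}$ factor, $s$-adic continuity of the commutator map forces a commutator of two elements congruent to $1$ modulo a sufficiently high power of $s$ to lie in ${\rm E}_n(\widehat{S})$ (the absorption step, where quasi-finiteness of $S$ over $R$ enters to secure the required Noetherian control and stability of ${\rm E}_n(S, I^k)$ under limits). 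Iterating this decomposition on $(c+1)$-fold commutators of derived elements and tracking where each factor lands, one concludes that every such iterated commutator is trivial in ${\rm K}_{1,n}(S)$, giving the nilpotent-by-abelian statement with the claimed bound.

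The main obstacle, and what makes the argument genuinely subtle, is the simultaneous book-keeping of two filtrations in the iteration: the length of the iterated commutator and the power of the completion ideal into which each factor sinks. A naive expansion of commutators across the Milnor square causes a combinatorial blow-up of cross terms, and one must invoke the commutator identities of Step~1 repeatedly to collapse these into a manageable product absorbed at each stage. Handling the non-commutativity of $S$ (only $R$ is assumed commutative), choosing $s$ compatibly with the Bass-Serre stratification of $R$, and verifying that the completed elementary groups behave well under the patching are the technical points that absorb most of the labour; the overall induction itself, once these tools are in place, is bookkeeping.
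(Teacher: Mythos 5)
First, a point of reference: the paper does not prove this statement at all. It is quoted verbatim from Bak's 1991 paper as background motivation, so there is no in-paper proof to compare your attempt against; your proposal has to stand on its own as a reconstruction of Bak's argument.

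As such a reconstruction it correctly names the main ingredients (induction on $\delta(R)$, the localization--completion square, relative commutator formulas, Suslin normality), but it is an outline rather than a proof, and two of its load-bearing steps are asserted in a form that is either missing or wrong. (i) The induction has no base case in your write-up: when $\delta(R)\leq n-2$ the claim is that ${\rm K}_{1,n}(S)$ is already abelian, and this is exactly the injective/surjective stability theorem for ${\rm K}_1$ of quasi-finite algebras --- the ``general stability'' half of Bak's paper --- without which the descent $\delta(R_s)<\delta(R)$ has nowhere to terminate. (ii) The ``absorption step'' as you state it --- that a commutator of two matrices congruent to $1$ modulo a high power of $s$ lies in ${\rm E}_n(\widehat{S})$ --- is false at that level of generality; the derived group of a congruence subgroup is not elementary merely by $s$-adic continuity, and asserting it amounts to assuming a strong relative form of the theorem you are trying to prove. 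What Bak actually does is build a functorial dimension filtration on ${\rm GL}_n$ and run a conjugation/commutator calculus relating ${\rm GL}_n(\widehat{S},s^k\widehat{S})$ to elementary subgroups only after passing to the localization $(\widehat{S})_s$, and the combination step --- why a commutator of a class-$(c-1)$ element from the $S_s$ factor with the completion factor lands in ${\rm E}_n(S)$, giving class $\leq c$ overall --- is precisely the part your sketch defers to ``bookkeeping.'' So the strategy is the right one, but the proposal does not yet contain a proof.
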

\vspace{2mm}

Subsequent to the simultaneous and celebrated proofs of the {\it Serre's problem on projective modules} by A. A. Suslin and D. Quillen in \cite{Suslin1977, Quillen1976}, a technique called the local-global principle (LGP) has emerged to generalize the results of elementary groups over local rings to elementary groups on projective modules over arbitrary rings. Later, this study was carried forward by Suslin-Kopeiko in \cite{SuslinKopeiko1977} by validating the results to orthogonal groups for proving stabilization of orthogonal groups and cancellation theorems for quadratic modules over polynomial rings. Quillen's analogue of the local-global principle for symplectic groups was proved in \cite{Kopeiko1978} by V. I. Kopeiko shortly. In \cite{Abe1983}, E. Abe proved a version of local-global principle for general Chevalley groups.  In \cite{Bass1968}, H. Bass introduced the transvection groups and its elementary subgroups. Later, the local-global principle for elementary subgroups of the transvection groups were proved by A. Bak, R. Basu, and Ravi A. Rao in \cite{BakBasuRao2010}. Local-global principle for relative elementary transvection groups was proved by R. Basu, Ravi A. Rao, and R. Khanna in \cite{BasuRaoKhanna2020}. Generalization of the local global-principle for the odd unitary group and the isotropic reductive groups are proved by A. Petrov and A. K. Stavrova in \cite{Petrov2005} and \cite{PetrovStavrova}. The normality of the elementary subgroups and the local-global principle of the classical groups and classical like groups facilitated in generalizing the solvability and nilpotency results to these groups as well. In \cite{Roozbeh2002, Roozbeh2003, BAKRHNV2009}, R. Hazrat, N. Vavilov and A. Bak generalized the above result to quadratic modules and the Chevalley groups using Bak's localization-completion method. In general, the nilpotent structure for Chevalley groups studied in \cite{Stepanov2016} by A. Stepanov using a localization technique called {\it universal localization}.

\vspace{2mm}

In \cite{Roy1968}, Amit Roy extended the definition of Eichler transformations on quadratic spaces over fields to the orthogonal sum of a non-degenerate quadratic space and a hyperbolic space over a commutative ring. The group generated by these transformations is called the Dickson-Siegel-Eichler-Roy (DSER) elementary orthogonal group. In \cite{AmbilyRao2020}, A. A. Ambily and R. A. Rao established Quillen's local-global principle for the DSER elementary orthogonal group. The DSER elementary orthogonal group coincides with the elementary orthogonal group ${\rm EO}_n(R)$ on a free quadratic $R$-space with standard hyperbolic form. Thus, the group is a generalization of the elementary orthogonal group. Similar to the classical groups, we can consider the relative DSER elementary orthogonal group with respect to an ideal $I$ of $R$. This article extends the local-global principle to the relative DSER elementary orthogonal group. 

\vspace{2mm}

In \cite{RRaoSS2017}, R. A. Rao and S. Sharma studied the solvability and nilpotency of the reduced ${\rm K}_1$ of the linear, symplectic and even orthogonal groups. For the even orthogonal groups they have obtained the following result using local-global principle:

\begin{theorem}[\cite{RRaoSS2017}]\label{RRSSSol}
    Let R be a local ring. Then the quotient group $\frac{{\rm SO}_{2m}(R[X])}
{{\rm EO}_{2m}(R[X])}$ is solvable of length at most $2$.
\end{theorem}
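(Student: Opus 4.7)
The plan is to realise $Q := {\rm SO}_{2m}(R[X])/{\rm EO}_{2m}(R[X])$ as an extension of two abelian groups, combining the relative local-global principle discussed above with the classical stability results for the even orthogonal group over a local ring.

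First, I would consider the evaluation homomorphism $\varepsilon_0 : {\rm SO}_{2m}(R[X]) \to {\rm SO}_{2m}(R)$ given by $X \mapsto 0$. This map is split surjective, and since it carries Eichler generators to Eichler generators it descends to a split surjection
\[
\bar\varepsilon_0 : Q \twoheadrightarrow \frac{{\rm SO}_{2m}(R)}{{\rm EO}_{2m}(R)}.
\]
For a local ring $R$ the target of $\bar\varepsilon_0$ is abelian, by the Suslin-Kopeiko stability and commutator computations for orthogonal groups, so $[Q,Q]$ already lies in $K := \ker \bar\varepsilon_0$. Thus the theorem will follow once $K$ itself is shown to be abelian.

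Next, any class in $K$ is represented by some $\alpha \in {\rm SO}_{2m}(R[X])$ with $\alpha(0) = I$: a representative whose value at $0$ already lies in ${\rm EO}_{2m}(R)$ may, without altering its class, be multiplied by a constant lift in ${\rm EO}_{2m}(R[X])$. Given two such representatives $\alpha, \beta$, the core calculation is to prove $[\alpha, \beta] \in {\rm EO}_{2m}(R[X])$. I would invoke the relative local-global principle proved in this paper, applied with respect to the ideal $(X)$, to reduce the claim to verifying that the image of $[\alpha, \beta]$ is elementary in ${\rm SO}_{2m}(R_\mathfrak{m}[X])$ for each maximal ideal $\mathfrak{m}$ of $R$. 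Locally, one combines a Horrocks-type splitting for the orthogonal group with the Chevalley commutator identities among Eichler transvections to exhibit $[\alpha, \beta]$ as an explicit product of elementary generators.

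The main obstacle is precisely this final local step: one must produce the elementary factorisation of $[\alpha, \beta]$ using only the vanishing of $\alpha$ and $\beta$ at $X = 0$, and this requires the Horrocks-type splitting for the even orthogonal group together with delicate bracket identities for Eichler transvections. Once this is in place, the relative local-global principle globalises the conclusion and $K$ is shown to be abelian. The resulting short exact sequence
\[
1 \longrightarrow K \longrightarrow Q \longrightarrow Q/K \longrightarrow 1
\]
then displays $Q$ as metabelian, which is the statement that ${\rm SO}_{2m}(R[X])/{\rm EO}_{2m}(R[X])$ is solvable of length at most $2$.
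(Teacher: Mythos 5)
There is a genuine gap, and it sits exactly where you locate ``the main obstacle.'' The paper does not prove this even-dimensional statement itself (it is quoted from Rao--Sharma), but it proves the odd analogue (Theorem~\ref{SK_1analogfree}) by the same method as the cited source, and comparing your plan with that proof shows two problems. First, your use of the local-global principle is vacuous: $R$ is local, so localizing at the maximal ideals of $R$ returns $R$ itself and reduces the problem to the problem. The working argument instead introduces an auxiliary variable, sets $\gamma(X,T):=[\alpha(TX),\beta(X)]$ with $\gamma(X,0)=Id$, and applies the \emph{absolute} local-global principle over the non-local base ring $R[X]$, localizing at the maximal ideals $\mathfrak{m}$ of $R[X]$; only then does one land in genuine local rings $R[X]_{\mathfrak{m}}$ where Lemma~\ref{splitting} is available.

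Second, and more seriously, your reduction to ``$K=\ker\bar\varepsilon_0$ is abelian'' discards the hypothesis that actually makes the local step work. Derived length $\le 2$ requires only that $[\alpha,\beta]$ be elementary for $\alpha,\beta\in[{\rm SO}_{2m}(R[X]),{\rm SO}_{2m}(R[X])]$, and the fact that $\beta$ is a product of commutators is used crucially: over the local ring $R[X]_{\mathfrak{m}}$ one has ${\rm SO}={\rm SO}_2\cdot{\rm EO}$ with ${\rm SO}_2$ commutative, hence $[{\rm SO},{\rm SO}]\subseteq{\rm EO}$ locally, so $\beta_{\mathfrak{m}}$ is elementary and normality finishes the local verification. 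If you only know $\beta(0)=I$ (membership in $K$), then locally $\beta_{\mathfrak{m}}=\sigma\epsilon$ with $\sigma\in{\rm SO}_2(R[X]_{\mathfrak{m}})$ possibly nontrivial, and the factor $[\alpha_{\mathfrak{m}},\sigma]$ is not controlled by any Horrocks-type splitting or bracket identity you cite; no such ``explicit elementary factorisation'' is available. Indeed, whether commutators of arbitrary elements of ${\rm SO}_{2m}(R[X])$ are elementary is precisely the abelianness question the paper flags as open, so your claim that $K$ is abelian is an unproved (and possibly stronger) assertion, not a step. The first half of your argument --- $Q/K$ abelian via the splitting over the local ring $R$ --- is fine, but it is also not needed once the commutator-subgroup argument is run correctly.
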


\vspace{2mm}

 In \cite{RRaoSS2017}, the authors proved that the group $\frac{{\rm SL}_{n}(R[X])}
{{\rm E}_{n}(R[X])}$ is abelian for a local ring $R$. Now, we need to check whether the group $\frac{{\rm SO}_{2m}(R[X])}
{{\rm EO}_{2m}(R[X])}$ is abelian or not. Nevertheless, we were able to prove the odd orthogonal analog of Theorem~\ref{RRSSSol} by using the relative local-global principle of the DSER elementary orthogonal group. Furthermore, we establish the solvability and nilpotency of ${\rm SK}_1$-analog of the relative DSER elementary orthogonal group as follows:
\begin{theorem}\label{DSERSolvability}
		Let $(Q,q)$ be a quadratic $R$-space of rank $n \geq 1 $ and $P$ be a finitely generated projective $R$-module of rank $m\geq2$. Let $M=Q\perp \mathbb{H}(P)$. If for every maximal ideal $\mathfrak{m}$ of $R,\; q_{\mathfrak{m}}\perp h_{\mathfrak{m}}=\tilde{\phi}_{n+2m}$, where $h$ is the hyperbolic form on $\mathbb{H}(P)$, then the group $\frac{{\rm SO}_{(R[X],(X))}(M[X],XM[X])}{{\rm EO}_{(R[X],(X))}(Q[X],\mathbb{H}(P[X]))}$ is solvable of length at most $2$.
	\end{theorem}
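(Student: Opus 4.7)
The plan is to apply the relative local-global principle for the DSER elementary orthogonal group (established earlier in this paper) in order to reduce the global solvability statement to its local counterpart, and then invoke a relative version of Theorem~\ref{RRSSSol} of Rao and Sharma. Solvability of length at most $2$ is equivalent to asserting that for any four elements $\alpha_1,\alpha_2,\alpha_3,\alpha_4 \in {\rm SO}_{(R[X],(X))}(M[X], XM[X])$, the iterated commutator $\theta := [[\alpha_1,\alpha_2],[\alpha_3,\alpha_4]]$ belongs to ${\rm EO}_{(R[X],(X))}(Q[X], \mathbb{H}(P[X]))$, so this is the precise membership we must establish.

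To carry this out, I first observe that $\theta$ is automatically in the relative orthogonal group since each $\alpha_i$ is, so the relative local-global principle applies and reduces the membership claim to showing that for every maximal ideal $\mathfrak{m}$ of $R$, the localisation $\theta_{\mathfrak{m}}$ lies in ${\rm EO}_{(R_{\mathfrak{m}}[X],(X))}(Q_{\mathfrak{m}}[X], \mathbb{H}(P_{\mathfrak{m}}[X]))$. The hypothesis $q_{\mathfrak{m}} \perp h_{\mathfrak{m}} = \tilde{\phi}_{n+2m}$ supplies an isometry between $M_{\mathfrak{m}}$ and a standard quadratic module of rank $n+2m$; under this isometry the localised DSER group is identified with the classical relative elementary orthogonal group over $R_{\mathfrak{m}}[X]$ relative to $(X)$, and similarly for ${\rm SO}$. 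Thus the local problem becomes the relative form of Theorem~\ref{RRSSSol}, namely that the quotient of relative orthogonal groups over a local ring has derived length at most $2$, from which $\theta_{\mathfrak{m}}$ lies in the local relative elementary group, concluding the global assertion via the local-global principle.

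The main obstacle is this last reduction to the relative Rao-Sharma statement: the Rao-Sharma proof performs commutator manipulations among standard generators of ${\rm EO}_{2m}$, and one has to confirm that, when the inputs are congruent to the identity modulo $X$, all intermediate elementary factors can be chosen to lie in the principal relative elementary subgroup ${\rm EO}_{2m}(R_{\mathfrak{m}}[X], XR_{\mathfrak{m}}[X])$. This requires careful use of Suslin/Vaserstein-type commutator identities adapted to the orthogonal setting and to the ideal $(X)$, together with the fact that the elementary generators parametrised by entries in $(X)$ generate (up to conjugation) the relative elementary subgroup. Once this relative local solvability is in hand, the relative local-global principle finishes the proof without further work, so the whole difficulty of the argument is concentrated in this local, commutator-theoretic step.
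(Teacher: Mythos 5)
Your proposal follows essentially the same route as the paper: reduce to showing that a commutator of elements of the derived subgroup lies in the relative DSER elementary group, localize at each maximal ideal of $R$ (where the hypothesis $q_{\mathfrak{m}}\perp h_{\mathfrak{m}}=\tilde{\phi}_{n+2m}$ and local freeness of $P$ identify the localized DSER group with the classical relative elementary orthogonal group), invoke relative solvability over the local ring, and conclude with the relative local-global principle (Theorem~\ref{RLG}). The ``main obstacle'' you single out --- relative solvability of length at most $2$ over a local ring --- is not actually left open: it is precisely Theorem~\ref{relativesolvabilityodd} proved earlier in this section for the odd case, combined with the even-dimensional relative result cited from \cite[Theorem~4.12]{RRaoSS2017}, so no further commutator-theoretic work is required.
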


\begin{theorem}\label{DSERNilpotency}
	Let $Q$ be a quadratic $R$- space of rank $n\geq 1$ and $P$ be a finitely generated projective module of rank $m\geq 2$. Let $M=Q\perp \mathbb{H}(P)$. If for all maximal ideal $\mathfrak{m}$ of $R,\; q_{\mathfrak{m}}\perp h_{\mathfrak{m}}=\tilde{\phi}_{n+2m}$, where $h$ is the hyperbolic form on $\mathbb{H}(P)$, then the nilpotency class 
 of the group $\frac{{\rm SO}_{(R[X],(X))}(M[X],XM[X])}{{\rm EO}_{(R[X],(X))}(Q[X],\mathbb{H}(P[X])}$ at most 2.
\end{theorem}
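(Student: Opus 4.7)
The strategy is to leverage the relative local-global principle for the DSER elementary orthogonal group, which was established earlier in this paper, to reduce the nilpotency assertion to the case of local base rings. Write $G := {\rm SO}_{(R[X],(X))}(M[X], XM[X])$ and $E := {\rm EO}_{(R[X],(X))}(Q[X], \mathbb{H}(P[X]))$. The statement that $G/E$ has nilpotency class at most $2$ is equivalent to the inclusion $[G, [G, G]] \subseteq E$, so the task reduces to showing that the triple commutator $[\alpha, [\beta, \gamma]]$ lies in $E$ for every $\alpha, \beta, \gamma \in G$.

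By the relative local-global principle, membership in $E$ can be tested after localizing at each maximal ideal $\mathfrak{m}$ of $R$. Under the standing hypothesis $q_\mathfrak{m} \perp h_\mathfrak{m} = \tilde{\phi}_{n+2m}$, the form $M_\mathfrak{m}$ is the standard one, and so the localized relative DSER elementary orthogonal group coincides with the standard relative elementary orthogonal group ${\rm EO}_{n+2m}(R_\mathfrak{m}[X], (X))$. The problem is thereby reduced to establishing nilpotency class at most $2$ for the quotient of the standard relative orthogonal group by its elementary subgroup over a local ring.

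In the local setting, I would combine the solvability conclusion of Theorem~\ref{DSERSolvability} (which already places $[[G,G],[G,G]]$ inside $E$) with a more refined commutator analysis. Concretely, one would express an inner commutator $[\beta_\mathfrak{m}, \gamma_\mathfrak{m}]$ in terms of the generators of ${\rm EO}_{n+2m}(R_\mathfrak{m}[X],(X))$ together with a correction in ${\rm SO}$ of smaller ``elementary complexity'', and then invoke the Steinberg-type commutator relations for elementary orthogonal generators and the normality of the relative elementary subgroup in the relative orthogonal group to collapse the triple commutator into $E$. An analogue of the argument used in the linear case in \cite{RRaoSS2017} can be adapted here, along with the identities among Eichler transformations used in the proof of Theorem~\ref{DSERSolvability}.

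The principal obstacle is precisely this upgrade from solvability length $\le 2$ to nilpotency class $\le 2$, since the latter is strictly stronger: it requires $[G,G]$ to be \emph{central} modulo $E$, not merely abelian. Making this centrality explicit demands controlling commutators $[\alpha_\mathfrak{m}, \epsilon]$ where $\epsilon$ ranges over a generating set of the relative elementary orthogonal group, while carefully tracking that every parameter appearing in the resulting elementary factors remains in the ideal $(X)$. Ensuring the reduction to the local case preserves the relative structure, and that the local commutator identities are genuinely strong enough to give $[G, [G,G]] \subseteq E$ rather than the weaker metabelian conclusion, constitutes the technical heart of the proof.
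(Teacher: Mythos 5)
Your overall architecture --- reduce to the free, standard-form case by localizing at the maximal ideals of $R$, then return via the relative local-global principle (Theorem~\ref{RLG}) --- is exactly the paper's route, which deduces the theorem from Theorem~\ref{DSERnilpotencycase1} and ultimately from Theorem~\ref{nilpotency}. However, the step you yourself flag as the ``technical heart'' is precisely where your proposal stops short of a proof, and the mechanism you propose for it is not the one that works. No refined commutator analysis, no Steinberg-type manipulation of generators, and no ``correction term of smaller elementary complexity'' are needed: the decisive input is Lemma~\ref{splitting}, which over a local ring gives the decomposition ${\rm O}_{R}(Q\perp\mathbb{H}(R)^m)={\rm O}_{R}(\mathbb{H}(R)^m)\cdot{\rm EO}_{R}(Q,\mathbb{H}(R)^m)$; combined with the commutativity of ${\rm SO}_2$ and the normality of the elementary subgroup, this shows that ${\rm SO}/{\rm EO}$ is \emph{abelian} over a local ring (Lemma~\ref{Abeliangroup}). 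Hence an inner commutator $[\beta,\gamma]$ is already honestly elementary after localization, and the triple commutator $[\alpha,[\beta,\gamma]]$ is locally elementary by normality alone --- the ``centrality modulo $E$'' you worry about is automatic once this is in place, not an extra obstruction. Your appeal to Theorem~\ref{DSERSolvability} is a red herring: solvability of length $2$ is the weaker conclusion and contributes nothing toward nilpotency class $2$.

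Two further points your sketch glosses over. First, localizing at a maximal ideal of $R$ produces $R_{\mathfrak{m}}[X]$, which is not local, so the abelianness of the local quotient cannot be invoked there directly; the paper's Theorem~\ref{nilpotency} performs a second localization, at the maximal ideals of $R[X]$, and introduces an auxiliary variable by setting $\gamma(X,T):=[\alpha(XT),\beta(X)]$, so that $\gamma(X,0)=Id$ and the local-global principle can be applied in the variable $T$; without this homotopy device the hypothesis $\gamma(0)=Id$ required by the local-global principle is not available. Second, landing in the \emph{relative} group ${\rm EO}_{(R[X],(X))}$ at the end is not achieved by ``tracking that every parameter remains in $(X)$'' through the elementary factors; it follows from $\gamma(0)=Id$ together with Lemma~\ref{steinbergrelns}, which identifies the relative elementary group with the intersection of the absolute elementary group and the relative orthogonal group. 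With these ingredients supplied, your outline closes up into the paper's argument.
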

In \cite{RRaoSS2017}, the authors studied the commutativity principle for the special linear group and the symplectic group. For the special linear group, the result says that the product of matrices is commutative up to an elementary matrix if the multiplicand is homotopic to identity. This result is valid for the symplectic group as well. The even orthogonal analog of the above result was proved in \cite{RRaoSS2022} in a different approach using the relative local-global principle. The authors proved the following theorem:

\begin{theorem}[{\cite[Theorem~4.13]{RRaoSS2022}}]\label{Homotopy}
	Let $m\geq 3$ and let $\alpha \in {\rm SO}_{2m}(R)$ be homotopic to identity. Then the commutator $[\alpha,\beta  ]\perp I_2 \in {\rm EO}_{2m+2}(R)$, for all $\beta \in {\rm O}_{2m}(R)$.
	
\end{theorem}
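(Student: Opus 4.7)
The plan is to prove the statement by the standard paradigm for results of this flavor: lift the homotopy to obtain a matrix over $R[X]$ interpolating between the identity and $\alpha$, reinterpret the commutator as an element of a suitable \emph{relative} orthogonal group, apply the relative local-global principle to pass to local rings, and finish with a local factorization argument.

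First I would use the homotopy hypothesis to choose $\alpha(X) \in {\rm SO}_{2m}(R[X])$ with $\alpha(0) = I_{2m}$ and $\alpha(1) = \alpha$. Viewing $\beta$ as a constant element of ${\rm O}_{2m}(R[X])$, set $\gamma(X) := [\alpha(X),\beta] \in {\rm SO}_{2m}(R[X])$. Since $\gamma(0) = I_{2m}$, one has $\gamma(X) \in {\rm SO}_{2m}(R[X],(X))$, and $\gamma(1) = [\alpha,\beta]$. Thus it suffices to establish the stronger statement
\[
\gamma(X)\perp I_2 \in {\rm EO}_{2m+2}(R[X],(X)),
\]
because the evaluation $X\mapsto 1$ sends ${\rm EO}_{2m+2}(R[X],(X))$ into ${\rm EO}_{2m+2}(R)$.

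Next I would invoke the relative local-global principle for the elementary orthogonal group (the orthogonal analog of the LGPs discussed in the introduction, of which the DSER version is the main technical tool developed in this paper). This reduces the membership question to verifying, for every maximal ideal $\mathfrak{m}$ of $R$, that $\gamma(X)_{\mathfrak{m}} \perp I_2 \in {\rm EO}_{2m+2}(R_{\mathfrak{m}}[X],(X))$. Over the local ring $R_{\mathfrak{m}}$, every projective module is free, so we are reduced to the elementary orthogonal group on the standard hyperbolic form, where the generating Eichler transformations are fully explicit.

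The heart of the proof is the local step. Using a Suslin--Kopeiko style factorization of $\alpha(X)_{\mathfrak{m}} \in {\rm SO}_{2m}(R_{\mathfrak{m}}[X],(X))$ (a polynomial curve with $\alpha(0)_{\mathfrak{m}}=I$ over a polynomial ring in one variable over a local base), I would decompose $\alpha(X)_{\mathfrak{m}}$ into a product of elementary Eichler transformations with parameters in $(X)$, then expand $\gamma(X)_{\mathfrak{m}} = \alpha(X)_{\mathfrak{m}}\,\beta\,\alpha(X)_{\mathfrak{m}}^{-1}\,\beta^{-1}$ by means of the standard Eichler commutator identities. The extra hyperbolic plane supplied by $\perp I_2$ furnishes exactly the room needed to rewrite each $\beta$-conjugate of an Eichler generator as a product of relative Eichler generators (an orthogonal Whitehead-type lemma). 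The main obstacle is precisely this local bookkeeping: guaranteeing that after exactly one hyperbolic stabilization every $\beta$-conjugate can be absorbed into the relative elementary orthogonal group. This is where the rank hypothesis $m\geq 3$ enters in an essential way, and where the dilation/LGP machinery developed earlier in this paper (and in the DSER setting in \cite{AmbilyRao2020}) is really being used.
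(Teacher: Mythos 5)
Your global skeleton (lift the homotopy to $\alpha(X)$ with $\alpha(0)=Id$, form the commutator $\gamma(X)$, reduce by a local-global principle to the local case, evaluate at $X=1$) matches the strategy the paper uses for its odd analogue, Theorem~\ref{homotopytheorem}. The gap is in your local step. You propose to factor $\alpha(X)_{\mathfrak m}\in{\rm SO}_{2m}(R_{\mathfrak m}[X],(X))$ as a product of elementary Eichler transformations with parameters in $(X)$. For a general local ring $R_{\mathfrak m}$ no such factorization is available: the inclusion ${\rm SO}_{2m}(R_{\mathfrak m}[X],(X))\subseteq{\rm EO}_{2m}(R_{\mathfrak m}[X])$ is exactly the kind of statement whose failure makes the quotient in Theorem~\ref{RRSSSol} a nontrivial object (only solvability of length at most $2$ is known; Suslin--Kopeiko-type factorizations are available over fields or equicharacteristic regular local rings, not over arbitrary local rings). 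Worse, if your factorization did hold at every maximal ideal, then Lemma~\ref{ALG} would give $\alpha(X)\in{\rm EO}_{2m}(R[X])$ outright, hence $\alpha=\alpha(1)\in{\rm EO}_{2m}(R)$, and the theorem would be trivial with no commutator against $\beta$ and no stabilization by $I_2$; the statement is formulated as it is precisely because this is not known. So your argument either begs the question or rests on an unavailable input, and the roles you assign to $m\geq 3$ and to the extra hyperbolic plane (absorbing $\beta$-conjugates of Eichler generators) are attached to this flawed step.

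The actual proof (run in the paper for the odd case, and the same in \cite{RRaoSS2022} for the even case) decomposes the \emph{other} factor. Over $R_{\mathfrak m}$, Lemma~\ref{splitting} gives $(\beta\perp I_2)_{\mathfrak m}=\tau\cdot(I_{2m}\perp\rho)$ with $\tau\in{\rm EO}_{2m+2}(R_{\mathfrak m})$ and $\rho\in{\rm O}_2(R_{\mathfrak m})$ supported on the appended hyperbolic plane. Since $I_{2m}\perp\rho$ commutes with $\alpha(X)_{\mathfrak m}\perp I_2$, the commutator collapses to $[\alpha(X)_{\mathfrak m}\perp I_2,\ \tau]$, which lies in ${\rm EO}_{2m+2}(R_{\mathfrak m}[X])$ by normality of the elementary subgroup (this is where $m\geq 3$ enters, via Theorem~\ref{NormalityAbsolute}). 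The purpose of $\perp I_2$ is thus to create room for the non-elementary ${\rm O}_2$-part of $\beta$ to live where it commutes with $\alpha(X)$; no factorization of $\alpha(X)$ is ever needed, and since $\gamma(0)=Id$ the absolute Lemma~\ref{ALG} (rather than the relative Theorem~\ref{RLG}) already suffices to globalize and conclude at $X=1$.
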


We extend Theorem \ref{Homotopy} to the odd orthogonal group as follows.

\begin{theorem}\label{homotopytheorem}
	
 Let $m\geq 3$ and let $\alpha \in {\rm SO}_{2m-1}(R)$ be homotopic to identity. Then the commutator $[\alpha,\beta  ]\perp I_2 \in {\rm EO}_{2m-1}(R)$, for all $\beta \in {\rm O}_{2m-1}(R)$.
	
\end{theorem}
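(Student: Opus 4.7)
The plan is to mirror the even orthogonal argument of Theorem~\ref{Homotopy}, replacing its ingredients by their odd-orthogonal counterparts and leaning on the relative DSER local-global principle proved in this paper. Since $\alpha \in {\rm SO}_{2m-1}(R)$ is homotopic to identity, I fix a lift $\tilde\alpha(X) \in {\rm SO}_{2m-1}(R[X])$ with $\tilde\alpha(0) = I_{2m-1}$ and $\tilde\alpha(1) = \alpha$. To bring the problem into the DSER framework, let $Q$ be the rank-$(2m-1)$ quadratic $R$-space underlying $\alpha$ and set $M = Q \perp \mathbb{H}(R)$; then $\tilde\alpha(X) \perp I_2$ lies in the relative orthogonal group ${\rm SO}_{(R[X],(X))}(M[X], XM[X])$, since it specialises to the identity at $X = 0$. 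Extending $\beta \perp I_2 \in {\rm O}_{2m+1}(R)$ trivially to $R[X]$, the commutator $[\tilde\alpha(X) \perp I_2, \beta \perp I_2] = [\tilde\alpha(X),\beta] \perp I_2$ therefore also sits inside ${\rm SO}_{(R[X],(X))}(M[X], XM[X])$.

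The central claim is that this commutator belongs to the relative elementary subgroup ${\rm EO}_{(R[X],(X))}(Q[X], \mathbb{H}(R[X]))$; specialising at $X = 1$ then yields $[\alpha,\beta] \perp I_2 \in {\rm EO}_{2m+1}(R)$, which is the desired conclusion. By the relative local-global principle for the DSER elementary orthogonal group, which is the main technical contribution of the paper, this membership can be verified after localising at every maximal ideal $\mathfrak{m}$ of $R$. Over $R_\mathfrak{m}$, Theorem~\ref{DSERSolvability} tells us that $\frac{{\rm SO}_{(R_\mathfrak{m}[X],(X))}(M[X], XM[X])}{{\rm EO}_{(R_\mathfrak{m}[X],(X))}(Q[X], \mathbb{H}(R_\mathfrak{m}[X]))}$ is solvable of length at most $2$. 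Coupling this with a DSER analogue of the Suslin-type commutator inclusion $[{\rm SO}(R,I), {\rm O}(R)] \subseteq {\rm EO}(R,I)$, which itself flows from the normality of the relative elementary subgroup inside the ambient orthogonal group, places the localised commutator in the relative elementary subgroup.

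The principal obstacle I expect is precisely this last local commutator inclusion, in a form strong enough to accommodate $\beta \in {\rm O}$ rather than $\beta \in {\rm EO}$: the bare solvability statement does not by itself trivialise such a mixed commutator in the quotient. One therefore needs to analyse how outer conjugation by $\beta$ interacts with the DSER generators relative to the ideal $(X)$ and, by using the hyperbolic slot available in $M = Q \perp \mathbb{H}(R)$, rewrite a mixed commutator as a product of relative elementary orthogonal transformations. This is exactly where the hypothesis $m \geq 3$ is used, since it supplies enough hyperbolic rank to execute the standard conjugation manoeuvres converting $[\tilde\alpha, \beta] \perp I_2$ into an elementary expression, in parallel with the rank bound in Theorem~\ref{Homotopy}.
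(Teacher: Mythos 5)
Your outer skeleton matches the paper's: lift $\alpha$ to $\alpha(X)\in {\rm SO}_{2m-1}(R[X])$ with $\alpha(0)=Id$, $\alpha(1)=\alpha$, form $\gamma(X)=[\alpha(X)\perp I_2,\ \beta\perp I_2]$, verify elementary membership locally, apply a local-global principle, and specialise at $X=1$. But the step you yourself flag as ``the principal obstacle'' --- showing that the localised mixed commutator is elementary --- is exactly the substance of the proof, and your proposed route to it does not work as stated. Theorem~\ref{DSERSolvability} concerns commutators of two elements that both already lie in the derived subgroup of the relative special orthogonal group; it says nothing about $[\alpha(X),\beta]$ with $\beta$ an arbitrary element of ${\rm O}_{2m-1}(R)$, and the auxiliary ``Suslin-type inclusion $[{\rm SO}(R,I),{\rm O}(R)]\subseteq {\rm EO}(R,I)$'' you invoke is neither proved in the paper nor derivable from the normality statements alone. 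So the proposal leaves a genuine gap precisely where the argument has to be made.

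The paper closes that gap with Lemma~\ref{splitting} (the splitting ${\rm O}_{R}(Q\perp\mathbb{H}(R)^m)={\rm O}_{R}(\mathbb{H}(R)^m)\cdot {\rm EO}_{R}(Q,\mathbb{H}(R)^m)$ over a local ring): after localising, $(\beta\perp I_2)_{\mathfrak{m}}=(I_{2m-1}\perp\rho)\,\tau$ with $\rho\in {\rm O}_2(R_{\mathfrak{m}})$ supported on the appended hyperbolic plane and $\tau$ elementary. Since $I_{2m-1}\perp\rho$ acts on a block disjoint from $\alpha(X)\perp I_2$, the two commute, and normality of ${\rm EO}_{2m+1}$ then forces $\gamma(X)_{\mathfrak{m}}\in {\rm EO}_{2m+1}(R_{\mathfrak{m}}[X])$ with no solvability input at all; the appended $I_2$ exists precisely to absorb the non-elementary part of $\beta$. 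The paper then finishes with the \emph{absolute} local-global principle (Lemma~\ref{ALG}, using $\gamma(0)=Id$), not the relative one. A secondary issue: your choice $M=Q\perp\mathbb{H}(R)$ with $Q$ of rank $2m-1$ gives hyperbolic rank $1$, below the threshold $m\geq 2$ required by Lemma~\ref{ALG} and Theorem~\ref{RLG}; the correct identification is ${\rm EO}_{2m+1}(R)={\rm EO}_R(Q,\mathbb{H}(R)^m)$ with $Q$ of rank $1$ and hyperbolic rank $m\geq 3$, which is also what makes the splitting lemma applicable.
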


	\section{Preliminaries}

	Throughout this paper, $R$ denotes a  commutative Noetherian ring with unity in which $2$ is invertible. A quadratic form on an $R$-module $M$ is a function $q:M\to R$ which  satisfies the following conditions. (i) $q(rx)=r^2x \; {\rm for } \; r\in {R}$ and (ii) $q(x+y)-q(x)-q(y)$ is a bilinear form on $M\times M$ to $R$. The associated bilinear form of $q$ given by the above expression (ii) is denoted by $\langle \ ,\ \rangle_q$. 

\vspace{1mm}

A bilinear form $\langle \ ,\ \rangle$ on a quadratic module $M$ is said to be non-degenerate if $ \langle x,y \rangle =0$ for all $y \in M$ implies $ x=0$. The quadratic form is said to be non-degenerate if the associated bilinear form is non-degenerate.

\vspace{1mm}

 A quadratic $R$-space $M$ is a finitely generated projective $R$-module with a non-degenerate quadratic form $q$ on it. A transformation $\sigma$ on $M$  such that $\langle \sigma x, \sigma y \rangle_q=\langle x, y \rangle_q $ is called an {\it orthogonal transformation}. The group generated by the orthogonal transformations on $M$ is {\it the orthogonal group} and is denoted by ${\rm O}_{R}(M)$. Let $Id$ denote the identity element transformation group ${\rm O}_{R}(M)$.
 \vspace{1mm}
 Let $I$ be an ideal of the commutative ring $R$. The kernel of the natural homomorphism $\pi:{\rm O}_R(M) \rightarrow {\rm O}_R(M/IM)$ is called the relative orthogonal group on $M$ with respect to an ideal $I$ of $R$ and is denoted by ${\rm O}_{(R,I)}(M,IM)$. The special orthogonal group ${\rm SO}_{(R,I)}(M,IM)$ is the kernel of the determinant homomorphism ${\it det }:{\rm O}_{(R,I)}(M,IM) \rightarrow R^*$.
		\vspace{1mm}
	We will now recall the definitions and results from \cite{HahnOMeara1989, Klingenberg1961,  Roy1968, SuslinKopeiko1977}.
 
	\begin{definition}
		Let $P$ be a finitely generated projective $R$-module. Then the {\it hyperbolic space} $\mathbb{H}(P)$ is the quadratic space $P\oplus P^*$ with the quadratic form $q((x,f))=f(x)$. Then the associated bilinear form can be obtained as $\langle(x_1,f_1),(x_2,f_2)\rangle =f_2(x_1)+f_1(x_2)$ for $x_1,x_2\in P$ and $f_1,f_2 \in P^*$, where $P^*$ is the dual space ${\rm Hom}_R(P,R)$ of $P$.
	\end{definition}
 
 \vspace{1mm}
 
The classical orthogonal group is defined to be $${\rm O}_{n}(R):=\{\alpha \in {\rm GL}_n(R)|\alpha^t\phi\alpha=\phi\},$$ for some non-degenerate $n\times n$ symmetric matrix $\phi$. If we consider $Q$ to be a free quadratic $R$-space of rank $n\geq 1$ and the projective module $P$ to be $R^2$, then any orthogonal transformation can be viewed as an orthogonal matrix $\alpha$ with $\alpha^t\phi \alpha=\phi$ for the symmetric matrix $\phi $ corresponding to the symmetric bilinear form on $Q\perp\mathbb{H}(R)^2$. That is, when $M=Q\perp\mathbb{H}(R)^2$, the orthogonal group ${\rm O}_R(M)$ coincides with the classical orthogonal group ${\rm O}_{n+4}(R)$. The special orthogonal group ${\rm SO}_n(R)$ is the subgroup of ${\rm O}_n(R)$ consisting of matrices having determinant one. 

\vspace{2mm}

 Next, we define the relative orthogonal subgroup of the classical orthogonal group.

\begin{definition}[{{\it Relative orthogonal group}}]
	Let $R$ be a ring and $I$ be an ideal of $R$. The kernel of the natural homomorphism $\pi :{\rm  O}_n(R) \rightarrow {\rm O}_n(R/I) $ is the relative orthogonal group and is denoted by ${\rm O}_n(R,I)$. The relative special orthogonal subgroup ${\rm SO}_n(R,I)$ is the subgroup of ${\rm O}_n(R,I)$ consisting of matrices having determinant one.
\vspace{2mm}

  In this article, we consider the symmetric matrix 
$$\tilde{\phi}_n:=\begin{cases} \sum_{i=1}^{r}e_{2i-1,2i}+\sum_{i=1}^{r}e_{2i,2i-1} \ {\rm for}\  n=2r,\\
\vspace{.25cm}
{2}\perp {\sum_{i=1}^{r}e_{2i-1,2i}+\sum_{i=1}^{r}e_{2i,2i-1}} \ {\rm  for } \ n=2r+1, \end{cases}$$
\noindent where $e_{i,j}$ is the $n\times n$ matrix with $(i,j)^{th}$ entry $1$ and $0$ elsewhere.
\end{definition}

\vspace{2mm}

	In \cite{Roy1968}, Amit Roy generalized certain orthogonal transformations used in the works of L. E. Dickson, C. L. Siegel, M. Eichler, and C. T. C. Wall in \cite{Dickson1958, Siegel1938, Eichler1974, Wall1964}. Roy defined those transformations on the orthogonal sum of a non-degenerate quadratic space and a hyperbolic space over a commutative ring $R$ to prove the cancellation theorem for quadratic $R$-spaces \cite[Theorem~7.2]{Roy1968}. These transformations are known as {\it Dickson-Siegel-Eichler-Roy} ({\it DSER}) {\it elementary orthogonal transformations}.
	
	\begin{definition}[{{\it DSER elementary orthogonal transformations}}]
		Let $(Q,q)$ be a quadratic $R$-space and for some finitely generated projective $R$-module $P$, let $M=Q \perp \mathbb{H}(P)$. Then there is an isomorphism $d_{\langle \ ,\ \rangle_q}:Q \to Q^* $,  defined as $ d_{\langle \ ,\ \rangle_q}(z)(w) = \langle z,w\rangle_q$, for $z,w \in Q$.

  	\vspace{2mm}

		Let $\alpha:Q \to P$ be an $R$-linear map. Let $\alpha^t:P^* \to Q^*$ be the dual map defined by $\alpha^t(\phi_p)=\phi_p\circ\alpha$ for $\phi_p \in P^*$. Now let $\alpha^*:P^* \to Q $ be the map  $\alpha^*=d_{\langle\ ,\ \rangle_q}^{-1}\circ \alpha^t$. Then the orthogonal transformation $E_{\alpha} $ on $M$ is defined by:	$$E_{\alpha}(z,x,f) = (z-\alpha^*(f) ,x+\alpha(z)-\frac{1}{2}\alpha\alpha^*(f), f) \mbox{ for } z \in Q, x \in P \mbox{ and } f \in P^*.$$

  \vspace{2mm}
  
Similarly, for an $R$-linear map $\beta:Q \to P^*$, define the dual map $\beta^t : P^{**} \to Q^*$ by $\beta^t(\phi_p^*)=\phi_p^* \circ \beta$, for $\phi_p^*\in P^{**}$. Now define $\beta^*:P \to Q $ as $\beta^*=d_{\langle \ ,\ \rangle_q}^{-1}\circ \beta^t \circ i$, where  $i$ is the natural isomorphism from $P \to P^{**}$. Then the orthogonal transformation $E_{\beta}^*$ on $M$ is defined as:
$$E^*_{\beta}(z,x,f) = (z-\beta^*(x) ,x, \beta(z)-\frac{1}{2}\beta\beta^*(x)+f), \mbox{ for } z \in Q,x \in P \mbox{ and } f\in P^*. $$
These transformations are called DSER elementary orthogonal transformations. The group generated by these orthogonal transformations is called the {\it DSER elementary orthogonal group} and is denoted by ${\rm EO}_{R}(Q,\mathbb{H}(P))$.
\end{definition}

\begin{definition}[{{\it Elementary orthogonal group}}]

Let $\sigma$ be a permutation on $\{1,2,\dots ,2n\}$ such that $\sigma(2i)=2i-1$ and $\sigma(2i-1)=2i$. For $1\leq i,j \leq n, i\neq j$, and  $z\in R$, consider the orthogonal matrices of the form
		$$oe_{ij}(z)=I_{2n}+e_{ij}(z)-e_{\sigma(j)\sigma(i)}(z).$$
  
	  \vspace{1mm}
   
	The subgroup of ${\rm O}_{2n}(R)$ generated by these matrices is called even elementary orthogonal group and is denoted by ${\rm EO}_{2n}(R)$.
 
    \vspace{2mm}
 
	In \cite{FaselCalmes2015}, J. Fasel and B. Calm\`es have given the generators for the odd elementary orthogonal group. For $z\in R$, the matrices $F_{i}^1,F_{i}^2,F_{i,j}^3,F_{i,j}^4$ and $F_{i,j}^5$ are defined as follows:
 \begin{align*}
		F_{i}^1(z) \ &= \ I_{2n+1} + e_{1,2i+1}(z) - e_{2i,1}(2z)- e_{2i,2i+1}(z^2),\\
		F_i^2(z) \  &=\ I_{2n+1}+e_{1,2i}(z)-e_{2i+1,1}(2z) -e_{2i+1,2i}(z^2),\\
		F_{i,j}^3(z) \ &=\  I_{2n+1}+e_{2i,2j}(z)-e_{2j+1,2i+1}(z),\\
		F_{i,j}^4(z) \ &=\  I_{2n+1} + e_{2i,2j+1}(z)-e_{2j,2i+1}(z),\\
		F_{i,j}^5(z) \ &=\  I_{2n+1} +e_{2i+1,2j}(z)-e_{2j+1,2i}(z).
  \end{align*}

\vspace{2mm}

It is easy to verify that $F_{i,j}^3(z)= [F_{i}^2(z),F_{j}^2(1)]$, $F_{i,j}^4= [F_{i}^1(z),F_{j}^1(1)]$ and $F_{i,j}^5= [F_{i}^1(z),F_{j}^2(1)]$. Thus the group generated by matrices of the form $F_{1}^1(z)$ and $F_i^2(z)$, where $1 \leq i \leq n$ and $z\in R$ is called the {\it odd elementary orthogonal group} ${\rm EO}_{2n+1}(R)$. Clearly, ${\rm EO}_{2n+1}(R)$ is a subgroup of ${\rm O}_{2n+1}(R)$. For an ideal $I$ of $R$, the group ${\rm EO}_{2n}(I) $ is the subgroup of ${\rm EO}_{2n}(R)$, generated by $oe_{ij}(z)$ for $z\in I$. Similarly, ${\rm EO}_{2n+1}(I) $ is the subgroup of ${\rm EO}_{2n+1}(R)$, generated by $F^1_i(z)$, $F^2_i(z)$ for $z\in I$. The {\it relative elementary orthogonal group} ${\rm EO}_{n}(R,I)$ is the normal closure of ${\rm EO}_{n}(I)$ in ${\rm EO}_{n}(R)$. 
\end{definition}

\vspace{1mm}

Similar to the definitions of classical relative elementary groups in \cite{HahnOMeara1989}, we define the relative DSER elementary orthogonal group as follows.

	\begin{definition}[{\it Relative DSER elementary transformations}]
	
 Let $I$ be an ideal of $R$. The DSER elementary orthogonal transformations $E_{\alpha}$ and $E_\beta^*$ for $ \alpha : Q \rightarrow IP $ and $\beta: Q \rightarrow IP^*$ are called the relative DSER elementary orthogonal transformations. The group generated by these transformations is denoted by ${\rm EO}_{I}(Q,\mathbb{H}(P))$. The normal closure of ${\rm EO}_{I}(Q ,\mathbb{H}(P))$ in ${\rm EO}_{R}(Q,\mathbb{H}(P))$  is called the relative DSER elementary orthogonal group and is denoted by ${\rm EO}_{(R,I)}(Q ,\mathbb{H}(P))$.
		
\end{definition}

	\vspace{2mm}
	
	The excision ring method can be used to lift an element of relative group to an element of the absolute group and extend the results of absolute groups to corresponding relative subgroups. The excision ring $R\oplus I$, where $R$ is a ring and $I$ is an ideal of $R$ is defined as follows.
	
\begin{definition}[\cite{AGupta2015},{ \it  Excision Ring}]
		Let $R$ be a ring and $I$ be an ideal of $R$. Define addition and multiplication in the excision ring $R\oplus I $ as follows.
		$$(r_1,i_1)+(r_2,i_2):=(r_1+r_2,i_1+i_2),$$
		$$(r_1,i_1)\cdot(r_2,i_2):=(r_1r_2,r_1i_2+r_2i_1+i_1i_2),$$	
 for $r_1,r_2 \in R $ and $i_1,i_2 \in I$. Then $R\oplus I$ forms a ring with the above operations.
 \end{definition}

 Let $f:R\oplus I \rightarrow R$ be the natural projection homomorphism defined by $f(r,i)=r+i$. Then $f$ induces a natural homomorphism $f:{\rm O}_n(R\oplus I) \rightarrow {\rm O}_n(R)$.  
 \vspace{2mm}
	\begin{theorem}[{{\it Ideals in Excision Ring}}]\label{idealtheorem}
		The ideals in the excision ring $R\oplus I$ are of the form $J\oplus I_1$ for some ideals $J,I_1$ of $R$ such that $I_1\subseteq I$.
	\end{theorem}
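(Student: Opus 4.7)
The plan is to associate to an ideal $\mathcal{K}$ of $R\oplus I$ the two candidates
\[
J\,:=\,\{\,r\in R \,:\, (r,0)\in \mathcal{K}\,\},\qquad I_1\,:=\,\{\,i\in I \,:\, (0,i)\in \mathcal{K}\,\},
\]
and to prove that $J$ and $I_1$ are ideals of $R$ with $I_1\subseteq I$, and that $\mathcal{K}$ coincides with the direct sum $\{(j,i_1) : j\in J,\, i_1\in I_1\}$ inside $R\oplus I$.

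First I would verify that $J$ and $I_1$ are ideals of $R$. Additive closure of both is inherited from $\mathcal{K}$, while the identities
\[
(r,0)\cdot(r',0)\,=\,(rr',0),\qquad (r,0)\cdot(0,i_1)\,=\,(0,\,r i_1),
\]
coming directly from the multiplication formula of the excision ring, show at once that $rr'\in J$ whenever $r\in R$, $r'\in J$, and that $r i_1 \in I_1$ whenever $r\in R$ and $i_1\in I_1$. The containment $I_1\subseteq I$ is built into the definition. The inclusion $J\oplus I_1\subseteq \mathcal{K}$ is then immediate from the splitting $(j,i_1)=(j,0)+(0,i_1)$, each summand lying in $\mathcal{K}$ by construction of $J$ and $I_1$.

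The main step, and the place where I expect the principal obstacle, is the reverse inclusion $\mathcal{K}\subseteq J\oplus I_1$: starting from an arbitrary $(r,i)\in \mathcal{K}$ one must deduce that $(r,0)$ and $(0,i)$ both lie in $\mathcal{K}$ separately. My plan is to extract these by multiplying $(r,i)$ by well-chosen elements of $R\oplus I$ and taking suitable combinations, using the basic identities $(a,0)\cdot(r,i)\,=\,(ar,ai)$ and $(0,b)\cdot(r,i)\,=\,(0,\,br+bi)$ for $a\in R$ and $b\in I$. The delicate feature is that the two coordinates of an element of $R\oplus I$ are coupled through the cross terms in $(r_1,i_1)(r_2,i_2)=(r_1 r_2,\,r_1 i_2+r_2 i_1+i_1 i_2)$, so decoupling them within $\mathcal{K}$ will require genuine use both of the two-sidedness of $\mathcal{K}$ and of the fact that $I$ is itself an $R$-ideal. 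Once this separation is achieved, $\mathcal{K}=J\oplus I_1$ follows by assembling the pieces, and the theorem is proved.
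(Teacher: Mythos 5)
Your preparatory steps are all correct: with your definitions $J=\{r:(r,0)\in\mathcal{K}\}$ and $I_1=\{i:(0,i)\in\mathcal{K}\}$, the identities $(r,0)(r',0)=(rr',0)$ and $(r,0)(0,i_1)=(0,ri_1)$ do show that $J$ and $I_1$ are ideals of $R$ and that $J\oplus I_1\subseteq\mathcal{K}$. But the ``main step'' you defer --- extracting $(r,0)\in\mathcal{K}$ and $(0,i)\in\mathcal{K}$ from $(r,i)\in\mathcal{K}$ --- is a genuine gap, and no choice of multipliers will close it, because the decoupling is false. Take $R=\mathbb{Z}$, $I=2\mathbb{Z}$, and let $\mathcal{K}$ be the principal ideal generated by $(2,-2)$. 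Since $(r,i)\cdot(2,-2)=(2r,\,-2r+2i-2i)=(2r,-2r)$, we get $\mathcal{K}=\{(2r,-2r):r\in\mathbb{Z}\}$, an ideal containing $(2,-2)$ but neither $(2,0)$ nor $(0,-2)$; with your definitions $J=I_1=0$, and $\mathcal{K}$ is not of the form $J\oplus I_1$ for any choice of ideals. (Under the isomorphism $R\oplus I\cong\{(a,b)\in R\times R:a\equiv b \bmod I\}$, $(r,i)\mapsto(r,r+i)$, this $\mathcal{K}$ is $2\mathbb{Z}\times\{0\}$, which visibly does not split in the $(r,i)$-coordinates.) So the obstacle you flagged is not a technical one: Theorem~\ref{idealtheorem} as stated is false.

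For comparison, the paper's own proof takes the dual route: it defines $J$ and $I_1$ as the images of the two coordinate projections of $\mathfrak{I}$, so that $\mathfrak{I}\subseteq J\oplus I_1$ is automatic, verifies that these are ideals of $R$, and then stops --- the reverse inclusion $J\oplus I_1\subseteq\mathfrak{I}$ is never addressed, and it is exactly the same false decoupling seen from the other side. Neither argument can be completed. What is true, and what the subsequent applications (Corollary~\ref{Maximal ideal} and the localness of $R\oplus I$ for local $R$) actually require, is the statement for ideals $\mathcal{K}\supseteq 0\oplus I$: there $(0,i)\in\mathcal{K}$ for every $i\in I$, so $(r,i)\in\mathcal{K}$ gives $(r,0)=(r,i)-(0,i)\in\mathcal{K}$ and hence $\mathcal{K}=J\oplus I$ with $J=\{r:(r,0)\in\mathcal{K}\}$ an ideal of $R$. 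I would recommend proving that restricted version instead; your framework adapts to it immediately.
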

	
	\begin{proof}
		Let $\mathfrak{I}$ be an ideal of $R\oplus I$. Define $J:=\{r \in R| (r,i)\in \mathfrak{I} \text{ for some  } i\in I \}$. Then $J$ is additively closed; for if $r_1,r_2 \in J$, there exist $i_1,i_2 \in I $  such that $(r_1,i_1),(r_2,i_2)\in \mathfrak{I} $ implies $(r_1+r_2,i_1+i_2) \in \mathfrak{I}$. Thus $r_1+r_2 \in J$. Now, let $s\in J$. Then there exists $j\in I$ such that $(s,j) \in \mathfrak{I}$ and for $r \in R,\; (r,0)\in R \oplus I $. Therefore $(r,0)\cdot(s,j)=(rs,rj)\in \mathfrak{I}$, which implies $rs\in J$. Thus $J$ is an ideal of $R$.
  
		\vspace{2mm}
		
		Similarly the set $I_1$ defined by $I_1:=\{i\in I| (r,i)\in \mathfrak{I} \text{ for some  } r \in R \}$ forms an ideal of $R$ with $I_1\subseteq I$.
	\end{proof}
	
	Thus we get a characterization for maximal ideals in $R\oplus I$ in terms of maximal ideals of $R$ as follows.

	\begin{corollary}  \label{Maximal ideal}
		
		Let $\mathfrak{m}$ be a proper ideal of the commutative ring $R$. Then $\mathfrak{m}$ is a maximal ideal in $R$ if and only if $\mathfrak{m} \oplus I $ is a maximal ideal in the excision ring $R\oplus I$.	
	\end{corollary}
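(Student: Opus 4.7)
The plan is to reduce the statement about $\mathfrak{m}\oplus I$ in $R\oplus I$ to the corresponding statement about $\mathfrak{m}$ in $R$ by producing an explicit ring isomorphism $(R\oplus I)/(\mathfrak{m}\oplus I) \cong R/\mathfrak{m}$. Once this is in hand, maximality on either side translates into the common quotient being a field, and the biconditional is immediate.

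The key construction is the projection $\varphi : R\oplus I \to R/\mathfrak{m}$ defined by $\varphi(r,i) = r+\mathfrak{m}$. Additivity is immediate from the componentwise sum in $R\oplus I$, and multiplicativity follows because the ``correction term'' $r_1 i_2 + r_2 i_1 + i_1 i_2$ that appears in the second coordinate of the product $(r_1,i_1)(r_2,i_2)$ is discarded by $\varphi$. Surjectivity is clear from $\varphi(r,0)=r+\mathfrak{m}$, and a short computation identifies $\ker\varphi$ with $\mathfrak{m}\oplus I$. The first isomorphism theorem then yields the desired isomorphism of quotient rings.

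If one prefers to lean directly on the structural result just proved, Theorem~\ref{idealtheorem} gives an equally short argument. For the forward direction, any ideal $\mathfrak{I}$ properly containing $\mathfrak{m}\oplus I$ must, by Theorem~\ref{idealtheorem}, have the form $J\oplus I_1$; the inclusion $\mathfrak{m}\oplus I\subseteq J\oplus I_1$ forces $\mathfrak{m}\subseteq J$ and $I\subseteq I_1$, and combined with $I_1\subseteq I$ this gives $I_1=I$, after which maximality of $\mathfrak{m}$ forces $J=R$. For the converse, one first verifies that $J\oplus I$ is itself an ideal of $R\oplus I$ for every ideal $J$ of $R$ (the only point that is not formal is that $ji\in I$ for $j\in J$, $i\in I$), so a strict tower $\mathfrak{m}\subsetneq J\subsetneq R$ would yield the strict tower $\mathfrak{m}\oplus I\subsetneq J\oplus I\subsetneq R\oplus I$, contradicting maximality of $\mathfrak{m}\oplus I$. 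There is no substantive obstacle on either route; I would present the quotient argument as the proof, since it is both shorter and more conceptual than the ideal-theoretic bookkeeping.
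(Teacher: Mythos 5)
Your proposal is correct, and your preferred route is genuinely different from the paper's. The paper proves the corollary purely ideal-theoretically, leaning on Theorem~\ref{idealtheorem}: it assumes $\mathfrak{m}\oplus I$ is not maximal, sandwiches it as $\mathfrak{m}\oplus I\subseteq J\oplus I\subseteq R\oplus I$, and uses maximality of $\mathfrak{m}$ to force $J=\mathfrak{m}$; the converse direction is asserted without argument. Your second route is essentially this proof (and is in fact more complete, since you actually justify the converse by checking that $J\oplus I$ is an ideal and produces a strictly intermediate ideal). Your first route --- the surjection $\varphi:R\oplus I\to R/\mathfrak{m}$, $(r,i)\mapsto r+\mathfrak{m}$, with kernel $\mathfrak{m}\oplus I$, giving $(R\oplus I)/(\mathfrak{m}\oplus I)\cong R/\mathfrak{m}$ --- is not in the paper. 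It buys you two things: the biconditional falls out symmetrically from ``the common quotient is a field,'' with no case analysis, and it is independent of Theorem~\ref{idealtheorem}, whose blanket claim that every ideal of $R\oplus I$ is of the rectangular form $J\oplus I_1$ is the most delicate point of that section (for the corollary one only ever needs ideals containing $0\oplus I$, where rectangularity is automatic, and your quotient argument never touches the issue). The trade-off is that the paper's route reuses the structure theorem it has just established, which is presumably why it is phrased as a corollary.
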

	
	\begin{proof}
 
		If $\mathfrak{m}$ is an ideal of the ring $R$, then $\mathfrak{m}\oplus I$ is an ideal of $R\oplus I$. If $\mathfrak{m}\oplus I$ is not a maximal ideal of $R\oplus I$, then by Theorem \ref{idealtheorem}, there exists a proper ideal $J$ of $R$ such that $\mathfrak{m}\oplus I \subseteq J \oplus I \subseteq R \oplus I$.  Since $\mathfrak{m} \subseteq J$ and $\mathfrak{m}$ is a maximal ideal, we have $J=\mathfrak{m}$. Thus $\mathfrak{m} \oplus I$ is a maximal ideal in $R\oplus I$. Conversely, if $\mathfrak{m} \oplus I  $ is a maximal ideal of $R \oplus I$, then $\mathfrak{m}$ is a maximal ideal of $R$.  
  
	\end{proof}

	\begin{notation}
		Let $I$ be an ideal of $R$. Let $I_{\mathfrak{m}}$ denote $I \otimes R_{\mathfrak{m}}$ and $ M_\mathfrak{m}[X]$ denote $(Q \perp \mathbb{H}(P))\otimes R_{\mathfrak{m}}[X]$, where $R_{\mathfrak{m}}$ is the localization of $R$ with respect to the multiplicatively closed set $R\backslash \mathfrak{m}$.  

	\end{notation}

 From \cite{AmbilyRao2020}, we have the following lemma:
 
	\begin{lemma}[{\cite[Theorem~3.10]{AmbilyRao2020}, {\it Local-Global Principle}}]\label{ALG} 
     \noindent Let $(Q,q)$ be a quadratic $R$-space of rank $n\geq 1$ and $P$ be a finitely generated projective $R$ module of rank $m\geq 2$ and $M=Q\perp\mathbb{H}(P)$. Let $\alpha(X) \in {\rm O}_{R[X]}(M[X])$ with $\alpha(0) = Id$. Assume that for all maximal ideal $\mathfrak{m}$ of $R$, the  quadratic space $M_{\mathfrak{m}}$ is isomorphic to $R_{\mathfrak{m}}^{n+2m}$ with the matrix corresponding to the bilinear form is  $q_{\mathfrak{m}} \perp \widetilde{\phi}_{2m}.$ If for all maximal ideals $\mathfrak{m}$ of $R, \alpha(X)_{\mathfrak{m}} \in {\rm EO}_{R_{\mathfrak{m}}[X]}(Q_{\mathfrak{m}}[X],\mathbb{H}(R_{\mathfrak{m}}[X])^m)$, then $\alpha(X) \in
		{\rm EO}_{R[X]}(Q[X],\mathbb{H}(P[X]))$.
  
	\end{lemma}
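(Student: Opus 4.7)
The plan is to follow the Quillen--Suslin style local--global strategy adapted to the DSER elementary orthogonal group. The key technical ingredient is a \emph{dilation principle}: for $s\in R$ and $\gamma(X) \in {\rm EO}_{R_s[X]}(Q_s[X], \mathbb{H}(P_s[X]))$ with $\gamma(0)=Id$, there should exist an integer $k\geq 0$ such that for every $b\in(s^k)$, the element $\gamma(bX)$ lies in the image of the natural map ${\rm EO}_{R[X]}(Q[X],\mathbb{H}(P[X])) \to {\rm EO}_{R_s[X]}(Q_s[X],\mathbb{H}(P_s[X]))$. I would prove this by induction on the length of a DSER-generator word for $\gamma(X)$ over $R_s[X]$, using the explicit formulas for $E_\alpha$ and $E^*_\beta$ together with the fact that replacing $X$ by $s^kX$ clears the denominators appearing in the $R_s$-linear maps parametrising the generators.

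Granted the dilation principle, I would set
\[
T = \{ s \in R : \alpha(X)_s \in {\rm EO}_{R_s[X]}(Q_s[X], \mathbb{H}(P_s[X])) \}.
\]
The hypothesis that $\alpha(X)_{\mathfrak{m}}$ is DSER-elementary for every maximal ideal $\mathfrak{m}$, combined with the finiteness of the set of generators appearing in any such expression, implies $T$ is not contained in any maximal ideal and therefore generates the unit ideal. By Noetherianness, I pick $s_1,\ldots,s_r \in T$ and, after raising to a sufficiently large common power $k$, choose $c_1,\ldots,c_r \in R$ with $\sum_{i=1}^r c_i s_i^k = 1$.

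Next, I would introduce a second variable $Y$ and consider the element $\delta(X,Y) = \alpha(X+Y)\alpha(Y)^{-1}$ over $R[X,Y]$, which satisfies $\delta(0,Y)=Id$. Applying the dilation principle over $R_{s_i}[Y]$ in the variable $X$ yields $\widetilde{\delta}_i(X,Y) \in {\rm EO}_{R[X,Y]}(Q[X,Y], \mathbb{H}(P[X,Y]))$ that agrees with $\alpha(s_i^k X + Y)\alpha(Y)^{-1}$ after localisation at $s_i$. A telescoping product evaluated at the partial sums $Y_i = \sum_{j<i} c_j s_j^k X$ assembles these $\widetilde{\delta}_i$ into an element of ${\rm EO}_{R[X]}(Q[X],\mathbb{H}(P[X]))$ which agrees with $\alpha(X)$ after localisation at each $s_i$; since $(s_1,\ldots,s_r)=R$, this forces equality in ${\rm O}_{R[X]}(M[X])$, yielding the claim.

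The main obstacle is the dilation principle itself. Unlike in the purely linear setting, the DSER generators are indexed by $R$-linear maps $\alpha\colon Q \to P$ and $\beta\colon Q \to P^*$, not by ring elements, so ``clearing denominators'' must be carried out at the level of homomorphisms: after substituting $X \mapsto s^kX$, one must argue that each intermediate parametrising homomorphism extends from $R_s[X]$ to $R[X]$, and that the resulting word represents the same orthogonal transformation on localisation. Handling the mixed commutator relations between $E_\alpha$ and $E^*_\beta$ from \cite{Roy1968}, and exploiting the rank hypotheses $n\geq 1$, $m\geq 2$ together with the local trivialisation $q_{\mathfrak{m}}\perp\widetilde{\phi}_{2m}$ to ensure enough room for these manipulations, is the delicate, paper-specific part of the argument.
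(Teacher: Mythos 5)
Your proposal is correct in outline and takes essentially the same route as the source: the paper itself only quotes this lemma from \cite{AmbilyRao2020}, but both that reference and the present paper's proof of the relative analogue (Theorems~\ref{dilation} and~\ref{RLG}) proceed exactly as you describe --- a dilation principle obtained by clearing denominators through the conjugation formula $\gamma E'(s^{2^rd}xM_{k,l})\gamma^{-1}=\prod_t E'(s^dx_tN_{k_t,l_t})$ of Lemma~\ref{decomposition}, followed by the telescoping product built from $\alpha(X+Y)\alpha(Y)^{-1}$ and a partition of unity $\sum_i c_i s_i^k=1$. The dilation step, which you rightly identify as the delicate paper-specific part, is precisely the content of \cite[Corollary~3.7]{AmbilyRao2020} together with the commutator calculus of \cite{Ambily2019}, so no new idea is missing from your sketch.
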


Now we recall the definition of the spinor norm for an orthogonal group defined as follows:

	\begin{definition}[\it Spinor norm]
		Let $R$ be a local ring and let $Q$ be a quadratic $R$-space. Then the homomorphism $$N_s:{\rm O}_R(M)\to {\frac{R^*}{{R^*}^2}},$$ is defined by
  $$\tau_{v}\mapsto \langle v,v \rangle,$$ where $\tau_{v}$ is the hyperplane reflection given by $\tau_v(x)=x-2v\frac{\langle v,x \rangle}{\langle v,v \rangle }$ for $x\in Q$. 	
	\end{definition}
 \vspace{2mm}
       Now consider the following example on $R^2$.
 
	\begin{example}
		The matrix $V=\begin{pmatrix}
			u&0\\
			0&u^{-1}
		\end{pmatrix}$, where $u\in R$ can be expressed as the product $\tau_{e_1-e_2}\tau_{e_1-ue_2}$, where $e_1=(1,0)$ and $e_2=(0,1)$. Then the spinor norm of $V$ is $4u$.
	\end{example}

	\begin{definition}[{{\it Solvable group}}]
		Let $G$ be a group. Define the derived series $G=G_{(0)} \supseteq G_{(1)} \supseteq G_{(2)} \supseteq \cdots $, where $G_{(n+1)}=[G_{(n)},G_{(n)}]$. The group $G$ is said to be {\it solvable} if $G_{(n)}=\{e\}$ for some $n \in \mathbb{Z}$. The least value of $n$ for which $G_{(n)}=\{e\}$ is called the {\it length of the solvable group} $G$.
	\end{definition}

	\begin{definition}[{{\it Nilpotent group}}]
	Let $G$ be a group. Define the upper central sequence $G=G^{(0)} \supseteq G^{(1)} \supseteq G^{(2)} \supseteq \cdots $, where $G^{(n+1)}=[G,G^{(n)}]$. The group $G$ is said to be {\it nilpotent} if $G^{(n)}=\{e\}$ for some $n \in \mathbb{Z}$. The least value of $n$ for which $G^{(n)}=\{e\}$ is called {\it nilpotency class of the group} $G$.
\end{definition}

 Now we recall the following lemma about the decomposition of an orthogonal group of a quadratic $R$-space having sufficiently large Witt index.
	\begin{lemma}[{\rm \cite[Lemma~2.2]{RARao1984}}]\label{splitting}
		Let $(Q,q)$ be a diagonalizable quadratic $R$-space. Then for $m \geq \text{{\rm dim }{\rm Max}}(R) + 1$,
		$${\rm O}_{R}(Q\perp \mathbb{H}(R)^m)={\rm EO}_{R}(Q,\mathbb{H}(R)^m)\cdot{\rm O}_{R}(\mathbb{H}(R)^m)={\rm O}_{R}(\mathbb{H}(R)^m)\cdot{\rm EO}_{R}(Q,\mathbb{H}(R)^m).$$
	\end{lemma}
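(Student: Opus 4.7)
The plan is to show that any $\sigma \in {\rm O}_R(Q \perp \mathbb{H}(R)^m)$ admits a factorization $\sigma = \varepsilon \tau$ with $\varepsilon \in {\rm EO}_R(Q,\mathbb{H}(R)^m)$ and $\tau$ acting as the identity on $Q$, so that $\tau \in {\rm O}_R(\mathbb{H}(R)^m)$ as an element of the ambient orthogonal group. The strategy is inductive: since $Q$ is diagonalizable, fix an orthogonal basis $q_1,\dots,q_n$ of $Q$ with $q(q_i)=a_i \in R^{*}$, and successively premultiply $\sigma$ by DSER elementary transformations so that the corrected transformation fixes $q_1$, then $q_1$ and $q_2$, and so on. Once all $q_i$ are fixed, the residue necessarily stabilises $Q^{\perp}=\mathbb{H}(R)^m$, giving the required $\tau$.

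For the one-step reduction, consider $v = \sigma(q_i)$ and decompose it along $Q \perp P \perp P^{*}$ with $P=R^m$, writing $v = v_Q + v_P + v_{P^{*}}$. The transformations $E_\alpha$ with $\alpha : Q \to P$ chosen so that $\alpha(q_i)$ matches $v_P$ kill the $P$-component, and similarly the $E_\beta^{*}$ transformations clear the $P^{*}$-component; this reduces $v$ to a vector of length $a_i$ lying in $Q$ alone. The remaining task is to show that any two length-$a_i$ vectors in $Q$ can be matched by an element of ${\rm EO}_R(Q,\mathbb{H}(R)^m)$. This is achieved by a ``round-trip'' through the hyperbolic space: embed the length-$a_i$ vector in $Q$ into a suitable isotropic configuration in $Q \perp \mathbb{H}(R)^m$, use the transitivity of EO on such configurations to move it, and project back. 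At each intermediate step a unimodularity condition must be met, which is where the hypothesis $m \geq \dim{\rm Max}(R)+1$ is used, via the Bass--Serre / Bass stable range estimate applied to the projective module underlying $\mathbb{H}(R)^m$.

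The main obstacle is precisely this length-preserving transitivity step: it is the orthogonal analogue of the classical fact that ${\rm E}_m(R)$ acts transitively on unimodular rows once $m$ exceeds $\dim{\rm Max}(R)+1$, and it uses the Witt-index slack in an essential way to convert what would otherwise be stable but non-elementary motions into genuine EO motions. Once this is granted, the inductive construction of the factor $\varepsilon$ runs without further difficulty, because after fixing $q_1,\dots,q_{i-1}$ the problem restricts to the orthogonal complement, which still has a hyperbolic summand of rank $m \geq \dim{\rm Max}(R)+1$. Finally, applying the whole argument to $\sigma^{-1}$ and then inverting the resulting factorisation yields the symmetric decomposition ${\rm O}_R(\mathbb{H}(R)^m)\cdot{\rm EO}_R(Q,\mathbb{H}(R)^m)$.
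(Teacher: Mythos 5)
First, a point of reference: the paper does not prove this lemma at all --- it is quoted verbatim from \cite[Lemma~2.2]{RARao1984}, so there is no internal proof to compare your attempt against. Your outline does reproduce the architecture of the standard argument from that source: induct over an orthogonal basis $q_1,\dots,q_n$ of $Q$, use transitivity of ${\rm EO}_R(Q,\mathbb{H}(R)^m)$ on vectors of fixed unit length to fix the $q_i$ one at a time (the stabilizer of $q_1,\dots,q_{i-1}$ contains the EO-group of their orthogonal complement, so the induction restricts correctly), observe that the residue stabilises $Q^{\perp}=\mathbb{H}(R)^m$, and obtain the second decomposition by applying the argument to $\sigma^{-1}$ or by normality of the elementary subgroup.

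The gap is that the entire content of the lemma is concentrated in the transitivity step, which you explicitly grant rather than prove; that step is Roy's transitivity/cancellation theorem \cite{Roy1968} and is not a routine verification. Moreover, the mechanism you describe for the one-step reduction is not correct as stated: $E_\alpha$ sends $(v_Q,v_P,v_{P^*})$ to $(v_Q-\alpha^*(v_{P^*}),\; v_P+\alpha(v_Q)-\tfrac{1}{2}\alpha\alpha^*(v_{P^*}),\; v_{P^*})$, so it moves the $P$-component by $\alpha(v_Q)$, not by $\alpha(q_i)$, and an arbitrary $v_P$ lies in $\{\alpha(v_Q):\alpha\in{\rm Hom}(Q,P)\}$ only when $v_Q$ generates a free direct summand of $Q$ --- which is exactly what fails in general and what the hypothesis $m\geq \dim{\rm Max}(R)+1$ is needed to repair (one first uses the stable-range estimate to make a suitable coordinate of $v$ a unit, and only then clears components). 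Note also that $E_\alpha$ perturbs the $Q$-component whenever $v_{P^*}\neq 0$, so ``clear $P$, then clear $P^*$'' requires attention to the order of operations. Since the lemma is in any case imported from the literature, invoking Roy's transitivity theorem by citation would be legitimate here; but as a self-contained proof your sketch stops exactly where the real work begins.
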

	
\section{Normality of relative DSER elementary orthogonal group}

Let $Q$ be a quadratic $R$-space and $P$ be a finitely generated projective $R$-module. Define $M=Q\perp \mathbb{H}(P)$. For $\alpha\in {\rm Hom}(M,M) $ and $\beta\in {\rm Hom}(M,IM)$, a transformation $(\alpha,\beta)$ on $M\oplus IM$ can be defined as follows: $$(\alpha,\beta)(u,v):=(\alpha(u),\alpha(v)+\beta(u)+\beta(v)).$$

\noindent{\bf Notation:} Let $R':=R \oplus I$, $I':= 0\oplus I$, $Q':=Q \oplus IQ$ and $P':= P\oplus IP$. Define $M':= Q'\perp \mathbb{H}(P')$. 

\vspace{2mm}

In \cite{Suresh1994}, V. Suresh proved the {\it splitting lemma} for the DSER elementary orthogonal group as follows:

\begin{lemma}[{\rm \cite[Lemma~1.2]{Suresh1994}}]
    Let $\alpha_1,\alpha_2\in {\rm Hom }(Q,P) ( {\rm or }\;  \beta_1,\beta_2\in {\rm Hom}(Q,P^*))$, then we have the splitting for the elementary transformations $E_{\alpha}$ $($and $E_{\beta}^*)$ as follows

    $$E_{\alpha_1+\alpha_2}=E_{\frac{\alpha_1}{2}} E_{\alpha_2} E_{\frac{\alpha_1}{2}}({\rm or \;} E_{\beta_1+\beta_2}^*=E_{\frac{\beta_1}{2}}^* E_{\beta_2}^* E_{\frac{\beta_1}{2}}^* ).$$
\end{lemma}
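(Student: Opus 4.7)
The plan is to prove the identity by a direct componentwise computation, applied to an arbitrary element $(z,x,f) \in Q \oplus P \oplus P^*$, and then to observe that the $E^*_\beta$ case is formally identical.

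First, I would record the linearity of the $*$-operation: since $\alpha^* = d_{\langle\ ,\ \rangle_q}^{-1} \circ \alpha^t$ and both the transpose map and $d^{-1}$ are $R$-linear, we have $(\alpha_1+\alpha_2)^* = \alpha_1^* + \alpha_2^*$ and $(\alpha_1/2)^* = \alpha_1^*/2$. Using this, I expand the right-hand side of the defining formula for $E_{\alpha_1+\alpha_2}$ as
$$E_{\alpha_1+\alpha_2}(z,x,f) = \bigl(z - \alpha_1^*(f) - \alpha_2^*(f),\ x + \alpha_1(z) + \alpha_2(z) - \tfrac{1}{2}\textstyle\sum_{i,j=1}^{2}\alpha_i\alpha_j^*(f),\ f\bigr).$$

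Next, I would compute $E_{\alpha_1/2}\,E_{\alpha_2}\,E_{\alpha_1/2}$ one factor at a time. The $Q$-component and the $P^*$-component are straightforward: the three successive $z$-shifts $-\tfrac12\alpha_1^*(f)$, $-\alpha_2^*(f)$, $-\tfrac12\alpha_1^*(f)$ add to $-(\alpha_1^*+\alpha_2^*)(f)$, and $f$ is fixed. The heart of the argument is the $P$-component, where I would track four sources of contributions: the linear pieces $\tfrac12\alpha_1(z) + \alpha_2(z) + \tfrac12\alpha_1(z)$, the quadratic self-pieces $-\tfrac18\alpha_1\alpha_1^*(f)$, $-\tfrac12\alpha_2\alpha_2^*(f)$, $-\tfrac18\alpha_1\alpha_1^*(f)$, the cross term $-\tfrac12\alpha_2\alpha_1^*(f)$ obtained when $E_{\alpha_2}$ is applied to the already-shifted $z$-coordinate, and the cross term $-\tfrac12\alpha_1\alpha_2^*(f)$ (together with an extra $-\tfrac14\alpha_1\alpha_1^*(f)$) obtained when the outer $E_{\alpha_1/2}$ acts on the $z$-coordinate that has now picked up a $-\alpha_2^*(f)$ contribution. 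Summing the $\alpha_1\alpha_1^*(f)$ coefficients gives $-\tfrac18 - \tfrac14 - \tfrac18 = -\tfrac12$, and the other three products $\alpha_1\alpha_2^*(f)$, $\alpha_2\alpha_1^*(f)$, $\alpha_2\alpha_2^*(f)$ each appear with coefficient $-\tfrac12$, matching the expansion above.

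For the dual statement, I would repeat the same calculation with the formula for $E^*_\beta$, noting that $\beta^* = d_{\langle\ ,\ \rangle_q}^{-1}\circ\beta^t\circ i$ is again $R$-linear in $\beta$, and that the roles of $(x,f)$ are simply swapped throughout; the structure of the cancellation is identical.

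I expect the only real obstacle to be bookkeeping: one has to check that both cross terms $-\tfrac12\alpha_1\alpha_2^*(f)$ and $-\tfrac12\alpha_2\alpha_1^*(f)$ appear with the correct sign and coefficient, since in general $\alpha_1\alpha_2^* \neq \alpha_2\alpha_1^*$ and so one cannot rely on a symmetry argument. The symmetric placement of the two half-factors $E_{\alpha_1/2}$ on either side of $E_{\alpha_2}$ is precisely what forces these two distinct cross terms to emerge with matching coefficients, which is why the naive product $E_{\alpha_1}E_{\alpha_2}$ does not equal $E_{\alpha_1+\alpha_2}$ but the symmetrized triple product does. The invertibility of $2$ in $R$, assumed throughout, ensures that $\alpha_1/2$ is a well-defined element of $\mathrm{Hom}(Q,P)$.
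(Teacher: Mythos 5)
Your computation is correct: the bookkeeping of the $P$-component checks out (the $\alpha_1\alpha_1^*$ coefficients $-\tfrac18-\tfrac14-\tfrac18$ do sum to $-\tfrac12$, and each of the three remaining products $\alpha_1\alpha_2^*$, $\alpha_2\alpha_1^*$, $\alpha_2\alpha_2^*$ appears exactly once with coefficient $-\tfrac12$), the $Q$- and $P^*$-components are as you say, and the palindromic placement of the two half-factors is indeed what makes the cross terms come out right. Note, however, that the paper itself gives no proof of this lemma --- it is imported verbatim from Suresh's paper (Lemma~1.2 of \cite{Suresh1994}) --- so there is nothing to compare against except the original source; your direct componentwise verification is the standard argument one finds there, and your closing remarks on the necessity of $2\in R^*$ and the failure of the naive product $E_{\alpha_1}E_{\alpha_2}$ are apt.
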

\vspace{2mm}

\begin{lemma}{\label{elementarylift}}
    Let $\epsilon \in {\rm EO}_{(R,I)}(Q,\mathbb{H}(P))$. Then there exists a lift $\epsilon'\in {\rm EO}_{(R',I')}(Q',\mathbb{H}(P'))$ of $\epsilon$ such that $f(\epsilon')=\epsilon$.
\end{lemma}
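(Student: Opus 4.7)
The plan is to exploit the defining presentation of ${\rm EO}_{(R,I)}(Q, \mathbb{H}(P))$ as the normal closure of ${\rm EO}_{I}(Q, \mathbb{H}(P))$ in ${\rm EO}_{R}(Q, \mathbb{H}(P))$, and then lift a chosen expression $\epsilon = \prod_k \delta_k \gamma_k \delta_k^{-1}$ one generator at a time. Here each $\gamma_k$ is of the form $E_{\alpha_k}$ with $\alpha_k \in {\rm Hom}(Q, IP)$ or $E_{\beta_k}^*$ with $\beta_k \in {\rm Hom}(Q, IP^*)$, while each $\delta_k \in {\rm EO}_{R}(Q, \mathbb{H}(P))$ is a product of DSER generators $E_{\gamma}$, $E_{\eta}^{*}$ with $\gamma: Q \to P$, $\eta: Q \to P^{*}$.

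Next I would record the $R'$-module structure on $Q' = Q \oplus IQ$ and $P' = P \oplus IP$ dictated by the pair rule at the start of Section~3: $(r,i)\cdot(x_1,x_2) = (r x_1,\, r x_2 + i x_1 + i x_2)$. Using this, define the lifts on homomorphisms. For an absolute generator $\gamma: Q \to P$ take the coordinate-wise lift
\[
\gamma': Q' \to P', \qquad \gamma'(q_1,q_2) := (\gamma(q_1), \gamma(q_2)),
\]
and for a relative generator $\alpha: Q \to IP$ take the off-diagonal lift
\[
\alpha': Q' \to I'P', \qquad \alpha'(q_1,q_2) := (0,\, \alpha(q_1)+\alpha(q_2)),
\]
whose image sits inside $I'P' = 0 \oplus IP$ by construction. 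The analogous formulas apply for $\beta: Q \to IP^{*}$ and $\eta: Q \to P^{*}$ after dualising $P'$. Short checks using the explicit $R'$-action confirm $R'$-linearity. Let $\delta_k'$ and $\gamma_k'$ be the DSER elementary transformations on $M'$ built from these lifted homomorphisms, and set
\[
\epsilon' := \prod_{k} \delta_k' \gamma_k' (\delta_k')^{-1} \;\in\; {\rm EO}_{(R',I')}(Q', \mathbb{H}(P')).
\]

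To verify $f(\epsilon') = \epsilon$, note that $f: R' \to R$, $(r,i) \mapsto r+i$, induces $f$-semilinear surjections $\pi_Q: Q' \to Q$ and $\pi_P: P' \to P$, both given by $(x_1,x_2) \mapsto x_1 + x_2$. Direct computation yields $\pi_P \circ \gamma' = \gamma \circ \pi_Q$ and $\pi_P \circ \alpha' = \alpha \circ \pi_Q$, and similarly for the starred generators. Hence the induced group homomorphism $f_{*}: {\rm O}_{R'}(M') \to {\rm O}_{R}(M)$ sends $E_{\gamma'} \mapsto E_{\gamma}$ and $E_{\alpha'} \mapsto E_{\alpha}$; since $f_{*}$ respects products and inverses, $f_{*}(\epsilon') = \epsilon$.

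The main obstacle is setting up the quadratic data on $M'$ so that the dualisations $\alpha'^{*}$, $\gamma'^{*}$ appearing implicitly in the formulas for $E_{\alpha'}$ and $E_{\gamma'}$ are well-defined $R'$-linear maps that are compatible with $\pi_Q$ and $\pi_P$. Concretely, one must put a non-degenerate quadratic form $q'$ on $Q'$ satisfying $f(q'(u)) = q(\pi_Q(u))$, verify that the associated isomorphism $d_{\langle\ ,\ \rangle_{q'}}: Q' \to (Q')^{*}$ intertwines the projections appropriately, and then deduce $\pi_Q \circ \alpha'^{*} = \alpha^{*} \circ \pi_P$ and likewise for $\gamma'^{*}$. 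Once this compatibility is in place, the statements that $E_{\alpha'}, E_{\gamma'}$ are genuine elements of ${\rm EO}_{R'}(Q', \mathbb{H}(P'))$ and descend correctly under $f$ reduce to the pointwise checks already sketched, and the lemma follows.
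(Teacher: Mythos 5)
Your proposal is correct and takes essentially the same route as the paper: the paper also writes the generators of ${\rm EO}_{(R,I)}(Q,\mathbb{H}(P))$ as conjugates of relative generators by absolute elementary elements and lifts them via exactly your two formulas, since under the pair rule $(\alpha,\beta)(u,v)=(\alpha(u),\alpha(v)+\beta(u)+\beta(v))$ the paper's lifts $(\alpha,0)$ and $(0,\beta)$ are precisely your coordinate-wise and off-diagonal maps. You are somewhat more explicit about checking $f(\epsilon')=\epsilon$ and about the quadratic structure on $Q'$ needed for the dualisations, which the paper leaves implicit.
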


\begin{proof}

The generators of the relative elementary subgroup ${\rm EO}_{(R,I)}(Q,\mathbb{H}(P))$ are of the form $\epsilon_{\alpha,\beta}:=E'({\alpha})E'({\beta})E'({\alpha})^{-1}$, where $E'(-)$ is one of the elementary generators $E_-$ or $E^*_-$ of ${\rm EO}_{R}(Q,\mathbb{H}(P))$ with $\alpha \in {\rm Hom}(Q,P)$ or ${\rm Hom}(Q,P^*)$ and $\beta \in {\rm Hom}(Q,IP)$ or ${\rm Hom}(Q,IP^*)$. Now take the lift as $\widetilde {\epsilon}_{(\alpha,0), 
(0,\beta)}:= E'({(\alpha,0)}) E'({(0,\beta)}) E'({(\alpha,0)})^{-1} $, where $(\alpha,0)$ $ \in {\rm Hom}(Q',P')$ or ${\rm Hom}(Q',P'^*)$ and $(0,\beta)\in {\rm Hom}(Q',I'P')$ or ${\rm Hom}(Q',I'P'^* )$.

 \end{proof}

\begin{lemma}{\label{orthogonallift}}
      Let $\epsilon \in {\rm O}_{(R,I)}(M,IM)$. Then there exists a lift $\epsilon'\in {\rm O}_{(R', I')}(M',I'M')$ of $\epsilon$ such that $f(\epsilon')=\epsilon$.
\end{lemma}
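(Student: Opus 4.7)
The plan is to lift $\epsilon$ directly by means of the $(\alpha,\beta)$-construction introduced at the start of this section. Since $\epsilon \in {\rm O}_{(R,I)}(M,IM)$, the difference $\delta := \epsilon - Id$ lies in ${\rm Hom}_R(M, IM)$, and I would set
$$\epsilon' := (Id,\delta),$$
so that $\epsilon'(u,v) = (u,\ v + \delta(u) + \delta(v))$ for every $(u,v) \in M \oplus IM = M'$.

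First I would verify that $\epsilon'$ is $R'$-linear. Expanding $(r,i)(u,v) = (ru,\ rv + iu + iv)$ and using $R$-linearity of $\delta$, both $\epsilon'((r,i)(u,v))$ and $(r,i)\epsilon'(u,v)$ reduce to
$$(ru,\ rv + iu + iv + r\delta(u) + r\delta(v) + i\delta(u) + i\delta(v)),$$
so this step is purely formal.

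The main step is checking that $\epsilon'$ preserves the $R'$-valued bilinear form on $M' = M \otimes_R R'$. A short computation shows this form is given by
$$\langle (u,v),(u',v')\rangle_{q'} \;=\; \bigl(\langle u,u'\rangle_q,\ \langle u,v'\rangle_q + \langle v,u'\rangle_q + \langle v,v'\rangle_q\bigr) \in R \oplus I = R'.$$
After substituting $\epsilon'(u,v) = (u,\ v + \delta(u) + \delta(v))$, the first component is automatically $\langle u,u'\rangle_q$, and the second expands into twelve cross-terms. These group into four copies of the identity
$$\langle x,\delta(y)\rangle_q + \langle \delta(x), y\rangle_q + \langle \delta(x),\delta(y)\rangle_q = 0,$$
which follows from expanding $\langle \epsilon(x), \epsilon(y)\rangle_q = \langle x, y\rangle_q$, with $(x,y)$ ranging over the four pairs $(u,u'),(u,v'),(v,u'),(v,v')$. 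Cancelling these four zeros leaves precisely $\langle u,v'\rangle_q + \langle v,u'\rangle_q + \langle v,v'\rangle_q$, matching the bilinear form. This bookkeeping is the main obstacle; once it goes through, the rest is immediate.

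Finally, I would observe that $I'M' = 0 \oplus IM$, so $\epsilon'(u,v) \equiv (u,0) \pmod{I'M'}$, which places $\epsilon'$ inside ${\rm O}_{(R',I')}(M', I'M')$. Since the homomorphism $f: R' \to R$ identifies $M' \otimes_{R',f} R$ with $M$ via $(u,v) \mapsto u+v$, the induced orthogonal transformation $f(\epsilon')$ sends $u \in M$ to $u + \delta(u) = \epsilon(u)$, giving $f(\epsilon') = \epsilon$ and completing the construction of the required lift.
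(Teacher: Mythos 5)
Your construction $\epsilon' := (Id,\epsilon-Id)$ is exactly the one the paper uses, and your verification that it is orthogonal, congruent to the identity modulo $I'M'$, and projects to $\epsilon$ under $f$ is correct. The only difference is that you carry out in full the form-preservation computation that the paper dismisses with ``clearly,'' so this is the same proof with the details supplied.
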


 \begin{proof}
For $\epsilon \in {\rm O}_{(R,I)}(M,IM)$, define ${\epsilon'}:=(Id,\epsilon-Id).$ Since $\pi(\epsilon)=Id$, we have $\pi(\epsilon')=(Id,0)$. Clearly $\epsilon'$ is an orthogonal transformation on $M\oplus IM$ if $\epsilon$ is an orthogonal transformation on $M$. Thus $\epsilon \in {\rm O}_{(R,I)}(M,IM)$ implies $\epsilon' \in {\rm O}_{(R',I')}(M',IM')$.

 \end{proof}

	\begin{lemma}\label{steinbergrelns}
 Let $S$ be a commutative ring and $J$ be an ideal of $S$ such that a subring $R$ of $S$ maps isomorphically onto $S/J$. Let $Q$ be a quadratic $S$-space and $P$ be a projective $S$-module. Then the group $$G:=O_{(S,J)}(Q\perp \mathbb{H}(P))\cap {\rm EO}_{S}(Q,\mathbb{H}(P))$$ is generated by elements of the form $\gamma E'(\alpha)\gamma^{-1}$ where $\gamma\in {\rm EO}_{R}(Q,\mathbb{H}(P))$, $E'(-)$ is an elementary generator of ${\rm EO}_{S}(Q,\mathbb{H}(P))$ with $\alpha \in {\rm Hom}(Q,JP)$ or ${\rm Hom}(Q,JP^*)$. In particular, 
 $${\rm EO}_{(S,J)}(Q,\mathbb{H}(P))={\rm O}_{(S,J)}(Q\perp \mathbb{H}(P))\cap {\rm EO}_{S}(Q,\mathbb{H}(P)).$$
	\end{lemma}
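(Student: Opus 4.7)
The plan is to take an arbitrary $g \in G$, express it as a product $g = E'(\alpha_1) \cdots E'(\alpha_n)$ of ${\rm EO}_S$-generators with $\alpha_i \in {\rm Hom}(Q, P)$ or ${\rm Hom}(Q, P^*)$, and manipulate this expression into the required form using three ingredients: the additive splitting $S = R \oplus J$ (forced by the subring hypothesis $R + J = S$, $R \cap J = 0$); Suresh's splitting lemma $E'_{\alpha_1 + \alpha_2} = E'_{\alpha_1/2}\, E'_{\alpha_2}\, E'_{\alpha_1/2}$; and a reduction modulo $J$. The containment ${\rm EO}_{(S,J)}(Q, \mathbb{H}(P)) \subseteq G$ is immediate from the definition of the relative group as a normal closure in ${\rm EO}_S$ of elementary transformations trivial mod $J$, so all the content lies in the reverse direction and in identifying a good set of generators of $G$.

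Using $S = R \oplus J$, one decomposes each parameter $\alpha_i = \alpha_i^R + \alpha_i^J$ with $\alpha_i^J \in {\rm Hom}(Q, JP)$ (or ${\rm Hom}(Q, JP^*)$) and $\alpha_i^R$ the ``$R$-part''. In the excision-ring context in which the lemma will be applied---$S = R \oplus I$, $Q = Q_0 \otimes_R S$, $P = P_0 \otimes_R S$---this decomposition is canonical and $S$-linear. Suresh's splitting lemma then gives
$$ E'(\alpha_i) \;=\; E'(\alpha_i^R/2)\, E'(\alpha_i^J)\, E'(\alpha_i^R/2) \;=\; \epsilon_i \cdot T_i,$$
where $\epsilon_i := E'(\alpha_i^R) = E'(\alpha_i^R/2)^2 \in {\rm EO}_R(Q, \mathbb{H}(P))$ and $T_i := E'(\alpha_i^R/2)^{-1}\, E'(\alpha_i^J)\, E'(\alpha_i^R/2)$ is an ${\rm EO}_R$-conjugate of the $J$-parameter generator $E'(\alpha_i^J)$. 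A standard telescoping---repeatedly applying $T_j\, \epsilon_i = \epsilon_i\, (\epsilon_i^{-1} T_j \epsilon_i)$ to push every $\epsilon_i$ to the left---then produces
$$ g \;=\; \gamma \cdot \prod_{k=1}^n \bigl(\delta_k\, E'(\alpha_k^J)\, \delta_k^{-1}\bigr),$$
with $\gamma := \epsilon_1 \cdots \epsilon_n \in {\rm EO}_R(Q, \mathbb{H}(P))$ and $\delta_k := (\epsilon_{k+1} \cdots \epsilon_n)^{-1}\, E'(\alpha_k^R/2)^{-1} \in {\rm EO}_R(Q, \mathbb{H}(P))$.

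Reducing modulo $J$ kills every $E'(\alpha_k^J)$ (the parameter lies in $JP$ or $JP^*$) and kills $g$ itself ($g \in {\rm O}_{(S,J)}$), forcing $\bar\gamma = {\rm Id}$; the isomorphism $R \cong S/J$ identifies the reduction on ${\rm EO}_R(Q, \mathbb{H}(P))$ with the identity map on that group, so $\gamma = {\rm Id}$ and $g = \prod_k \delta_k\, E'(\alpha_k^J)\, \delta_k^{-1}$, which is the desired form. The ``in particular'' statement follows because each such factor, being a conjugate of an ${\rm EO}_J$-generator by an element of ${\rm EO}_R \subseteq {\rm EO}_S$, already lies in the normal closure ${\rm EO}_{(S,J)}(Q, \mathbb{H}(P))$. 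The main obstacle in this plan is the very first step: in the bare abstract hypothesis, an $S$-linear $\alpha : Q \to P$ only admits an $R$-linear splitting $\alpha = \alpha^R + \alpha^J$, because the ``$R$-part'' of $P$ is generally an $R$-submodule that is not $S$-stable, whereas Suresh's splitting lemma needs $S$-linear parameters. The excision-ring setup bypasses this exactly because the base-change identifications $Q = Q_0 \otimes_R S$ and $P = P_0 \otimes_R S$ together with the adjunction ${\rm Hom}_S(Q_0 \otimes_R S, P) = {\rm Hom}_R(Q_0, P)$ lift any $R$-level decomposition to a genuine $S$-linear one; once this splitting is in hand, the remainder of the argument is purely formal.
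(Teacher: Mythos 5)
Your proposal is correct and follows essentially the same route as the paper's proof: decompose each parameter as an $R$-part plus a $J$-part, apply Suresh's splitting lemma, regroup the product into ${\rm EO}_R$-conjugates of relative generators times a trailing element of ${\rm EO}_R(Q,\mathbb{H}(P))$, and use $g\in {\rm O}_{(S,J)}$ together with $R\cong S/J$ to kill that element. Your explicit remark that the decomposition $\alpha=\alpha^R+\alpha^J$ must be $S$-linear for Suresh's lemma to apply, and that this is secured by the extended-module structure in the excision-ring application, addresses a point the paper's proof leaves implicit.
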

\begin{proof}
    Let $\sigma \in G$. Then $\sigma =\prod_{k=1}^m E'(\alpha_{k})$ where $\alpha_{k} \in {\rm Hom}(Q,P)$ or ${\rm Hom}(Q,{P}^*)$. where $\alpha_k=\rho_k+\tau_k$ where $\rho_{k} \in {\rm Hom}_R(Q,P) $ or ${\rm Hom}_R(Q,P^*)$ and $\tau_{k} \in {\rm Hom}(Q,JP)$ or ${\rm Hom}(Q, JP^*)$. Now by splitting property, we have 
\begin{align*}
    \sigma &=\prod_{k=1}^{m}E'(\frac{\rho_k}{2})E'(\tau_k)E'(\frac{\rho_k}{2})\\&=\prod_{k=1}^{n+1}a_kb_k, 
\end{align*}

    \noindent where $a_1=E'(\frac{\rho_1}{2})$,
    $a_k=E'(\frac{\rho_{k-1}}{2})E'(\frac{\rho_{k}}{2})$  for  $k=2,\cdots,m$,
    $a_{m+1}=E'(\frac{\rho_{k}}{2})$, and $b_k=E'(\tau_k)$  for  $k=1,\cdots m \quad$ and $b_{m+1}=1$. 
Then we get $$\sigma =(\prod_{k=1}^{m}\gamma_kE'(\tau_k)\gamma_{k}^{-1})\gamma_{m+1},$$ 
where $\gamma_k=\prod_{l=1}^{k}a_k$. Since $\sigma\in {O}_{(S,J)}(Q,\mathbb{H}(P))$ and $S/J$ is isomorphic to $R$, we have $\gamma_{m+1}=Id$. Thus the result follows.
\end{proof}

\vspace{1mm}
From \cite{AmbilyRao2020}, we have the following theorem about the normality of the DSER elementary orthogonal group.   
 \begin{theorem}[{\rm \cite[Theorem~1.2]{AmbilyRao2020}}]\label{NormalityAbsolute} Let $Q$ be a quadratic $R$-space of rank $n$ and $P$ be a finitely generated projective $R$-module of rank $m$.	Then  for $n\geq 1$ and $m\geq 2$, the DSER elementary orthogonal group ${\rm EO}_{R}(Q, \mathbb{H}(P))$ is a normal subgroup of the orthogonal group ${\rm O}_R(Q\perp \mathbb{H}(P))$.
\end{theorem}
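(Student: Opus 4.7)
The plan is to prove normality by combining an $X$-variable trick with the local-global principle (Lemma~\ref{ALG}). It suffices to show that for every generator $\epsilon\in\{E_\alpha,E^*_\beta\}$ of the DSER elementary group and every $\sigma\in {\rm O}_R(Q\perp\mathbb{H}(P))$, the conjugate $\sigma\epsilon\sigma^{-1}$ lies in ${\rm EO}_R(Q,\mathbb{H}(P))$.

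To set this up, I would replace $\epsilon$ by the polynomial family $\epsilon(X):=E_{X\alpha}$ (respectively $E^*_{X\beta}$), which lies in ${\rm EO}_{R[X]}(Q[X],\mathbb{H}(P[X]))$ and satisfies $\epsilon(0)=Id$, $\epsilon(1)=\epsilon$. Then $\delta(X):=\sigma\epsilon(X)\sigma^{-1}$ is an orthogonal transformation of $M[X]$ with $\delta(0)=Id$. If, for each maximal ideal $\mathfrak{m}$ of $R$, one can verify that $\delta(X)_\mathfrak{m}\in {\rm EO}_{R_\mathfrak{m}[X]}(Q_\mathfrak{m}[X],\mathbb{H}(R_\mathfrak{m}^m)[X])$, then Lemma~\ref{ALG} forces $\delta(X)\in {\rm EO}_{R[X]}(Q[X],\mathbb{H}(P[X]))$, and specializing at $X=1$ gives the theorem.

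The main obstacle is therefore the local verification. Over $R_\mathfrak{m}$, the projective module $P$ becomes free of rank $m$, so one can work with explicit matrices and identify ${\rm EO}_{R_\mathfrak{m}}(Q_\mathfrak{m},\mathbb{H}(R_\mathfrak{m}^m))$ concretely. The natural approach is a reduction via hyperplane reflections: $\sigma_\mathfrak{m}$ decomposes (up to elementary factors) as a product of reflections $\tau_v$, so it suffices to compute $\tau_v\epsilon(X)\tau_v^{-1}$ for each generator. Writing $v\in Q_\mathfrak{m}\oplus P_\mathfrak{m}\oplus P_\mathfrak{m}^*$ in components, applying the Suresh splitting lemma to decompose $X\alpha$ (or $X\beta$) into summands that interact cleanly with each coordinate of $v$, and repeatedly invoking the commutator identities that relate the two families of generators $E_-$ and $E^*_-$, one should rewrite the conjugate as a product of elementary generators whose parameters are all divisible by $X$.

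The hypothesis $m\geq 2$ enters precisely at this local step: at least two hyperbolic summands are required to execute the ``ping-pong'' moves between the $E_-$ and $E^*_-$ families and to realize the elementary swaps needed to express $\tau_v$-conjugates as words in ${\rm EO}$, exactly as in the classical theory of ${\rm EO}_{2n}$. Assuming the local calculation goes through, the global conclusion follows formally by the route above: Lemma~\ref{ALG} converts local elementariness of $\delta(X)$ into global elementariness, and specialization at $X=1$ finishes the proof.
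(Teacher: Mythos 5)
Note first that the paper does not prove this statement: it is imported verbatim as \cite[Theorem~1.2]{AmbilyRao2020}, so there is no in-house argument to compare against. Your global architecture --- reduce to conjugates of single generators, pass to the polynomial family $\epsilon(X)=E_{X\alpha}$ with $\epsilon(0)=Id$, set $\delta(X)=\sigma\epsilon(X)\sigma^{-1}$, verify elementariness locally, apply Lemma~\ref{ALG}, and specialize at $X=1$ --- is exactly the route taken in that reference, and it is logically sound: the freeness hypothesis of Lemma~\ref{ALG} is automatic since $P_{\mathfrak m}$ is free over the local ring $R_{\mathfrak m}$, and there is no circularity because the local-global principle is established there independently of normality.

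The genuine gap is the local verification, which is the entire mathematical content of the theorem and which you explicitly defer (``assuming the local calculation goes through''). As written, ``decompose $\sigma_{\mathfrak m}$ into reflections and push $E_{X\alpha}$ through each $\tau_v$'' is not a proof: a reflection $\tau_v$ with $v$ having a nonzero $Q$-component mixes the $Q$, $P$ and $P^*$ coordinates, and it is not at all visible that $\tau_v E_{X\alpha}\tau_v^{-1}$ is a word in the generators --- this is precisely the hard computation, and invoking ``the Suresh splitting lemma and commutator identities'' does not discharge it. A cleaner reduction, and the one consonant with this paper's toolkit, is to use Lemma~\ref{splitting}: since $R_{\mathfrak m}$ is local, $\dim\mathrm{Max}(R_{\mathfrak m})=0$, so $\sigma_{\mathfrak m}=\eta\cdot\varepsilon$ with $\varepsilon\in{\rm EO}_{R_{\mathfrak m}}(Q_{\mathfrak m},\mathbb H(R_{\mathfrak m})^m)$ and $\eta\in{\rm O}_{R_{\mathfrak m}}(\mathbb H(R_{\mathfrak m})^m)$; conjugation by $\varepsilon$ is harmless, and conjugation by $\eta$ (which fixes $Q$) acts on the parameters by $\eta E_{X\alpha}\eta^{-1}=E'(X\cdot(\eta\circ\alpha))$-type formulas, which is checkable directly from the definition of $E_\alpha$ and $E^*_\beta$. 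Until one of these local arguments is actually carried out, the proposal establishes only the formal reduction, not the theorem.
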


With the aid of Theorem \ref{NormalityAbsolute}, now we prove the normality of the relative DSER elementary orthogonal subgroup ${\rm EO}_{(R,I)}(Q,\mathbb{H}(P))$ in ${\rm O}_{R}(M)$.
\begin{theorem}\label{relativenormality}
     Let $Q$ be a quadratic $R$-space of rank $n\geq 1$ and $P$ be a projective module of rank $m\geq 2$. Then the DSER elementary orthogonal group ${\rm EO}_{(R,I)}(Q,\mathbb{H}(P))$ is a normal subgroup of ${\rm O}_{R}(Q\perp \mathbb{H}(P))$.
\end{theorem}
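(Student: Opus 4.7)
My plan is to prove normality by transferring the problem to the excision ring $R' = R \oplus I$, where the absolute normality result (Theorem~\ref{NormalityAbsolute}) is already available, and then descending via the augmentation $f: R' \to R$. The key conceptual tool is Lemma~\ref{steinbergrelns}, which identifies the relative elementary group with the intersection of the absolute elementary group and the relative orthogonal group; this lemma applies over $R'$ because $R$ sits inside $R'$ as the subring $\{(r,0)\}$ and $R'/I' \cong R$.

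Given $\sigma \in {\rm O}_R(M)$ and $\epsilon \in {\rm EO}_{(R,I)}(Q, \mathbb{H}(P))$, I would first lift $\epsilon$ to $\epsilon' \in {\rm EO}_{(R',I')}(Q', \mathbb{H}(P'))$ via Lemma~\ref{elementarylift}, and lift $\sigma$ to $\sigma' \in {\rm O}_{R'}(M')$ by base change along $R \hookrightarrow R'$. Applying Theorem~\ref{NormalityAbsolute} over the ring $R'$ then gives $\sigma' \epsilon' \sigma'^{-1} \in {\rm EO}_{R'}(Q', \mathbb{H}(P'))$.

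Next, I would check that $\sigma' \epsilon' \sigma'^{-1}$ lies in ${\rm O}_{(R',I')}(M', I'M')$ by composing with the quotient map $\pi: R' \to R'/I' \cong R$, $(r,i) \mapsto r$. By construction in Lemma~\ref{elementarylift}, the factors of the form $E'((0,\beta))$ die under $\pi$ while $E'((\alpha,0))$ and $E'((\alpha,0))^{-1}$ cancel, so $\pi(\epsilon') = Id$; and $\pi(\sigma') = \sigma$, so $\pi(\sigma' \epsilon' \sigma'^{-1}) = Id$. Lemma~\ref{steinbergrelns} (applied with $S = R'$, $J = I'$) then yields $\sigma' \epsilon' \sigma'^{-1} \in {\rm EO}_{(R',I')}(Q', \mathbb{H}(P'))$. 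Finally, I would push this down via the ring homomorphism $f: R' \to R$, $(r,i) \mapsto r + i$, which satisfies $f(\sigma') = \sigma$ and $f(\epsilon') = \epsilon$ and sends ${\rm EO}_{(R',I')}(Q', \mathbb{H}(P'))$ into ${\rm EO}_{(R,I)}(Q, \mathbb{H}(P))$ by tracking elementary generators through $f$.

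The main obstacle is keeping the two distinct maps $R' \to R$ straight: the quotient $\pi$ is used to certify membership in the relative orthogonal group over $R'$, while the augmentation $f$ is used to recover the original conjugate $\sigma \epsilon \sigma^{-1}$. Both roles depend essentially on the asymmetry in the lift $\epsilon' = E'((\alpha,0)) E'((0,\beta)) E'((\alpha,0))^{-1}$, where the first and third factors are $\pi$-nontrivial but annihilate each other, and the middle factor is $\pi$-trivial but carries the full $f$-content. Verifying compatibility of these lifts with $\sigma'$ on elementary orthogonal generators — particularly that $f$ sends the normal closure in ${\rm EO}_{R'}$ into the normal closure in ${\rm EO}_R$ — is the routine but careful bookkeeping that remains.
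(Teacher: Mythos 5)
Your proposal follows essentially the same route as the paper's proof: lift the elementary element and the orthogonal element to the excision ring $R'=R\oplus I$, apply the absolute normality theorem (Theorem~\ref{NormalityAbsolute}) over $R'$, invoke Lemma~\ref{steinbergrelns} with $S=R'$, $J=I'$ to place the conjugate in ${\rm EO}_{(R',I')}(Q',\mathbb{H}(P'))$, and project back via $f$. The only cosmetic difference is that you lift $\sigma$ by base change along $R\hookrightarrow R'$ and verify membership in ${\rm O}_{(R',I')}(M',I'M')$ explicitly via $\pi$, where the paper cites Lemma~\ref{orthogonallift} and the normality of the relative orthogonal group; both are correct.
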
 

    \begin{proof}
       Let $\alpha \in {\rm EO}_{(R,I)}(Q,\mathbb{H}(P))$ and $\beta \in {\rm O}_{R}(Q\perp \mathbb{H}(P))$. Using Lemma \ref{elementarylift} and Lemma \ref{orthogonallift}, we have $\widetilde{\alpha}\in {\rm EO}_{(R',I')}(Q',\mathbb{H}(P'))$ and $\widetilde{\beta} \in {\rm O}_{R'}(M')$ such that $f(\widetilde{\alpha})=\alpha$ and $f(\widetilde{\beta})=\beta$. Now, by applying the normality of the DSER elementary orthogonal group and ${\rm O}_{(R,I)}(Q\perp \mathbb{H}(P))$ in ${\rm O}_{R}(Q\perp \mathbb{H}(P))$ we have $\widetilde{\beta} \widetilde{\alpha} \widetilde{\beta}^{-1}\in {\rm O}_{(R',I')}(Q'\perp \mathbb{H}(P'))\cap {\rm EO}_{R'}(Q',\mathbb{H}(P'))$. Since $R'/I'$ is isomorphic to a subring $R$ of $R'$, Lemma \ref{steinbergrelns} implies that the product $\widetilde{\beta} \widetilde{\alpha} \widetilde{\beta}^{-1} \in {\rm EO}_{(R',I')}(Q',\mathbb{H}(P'))$. Hence, the projection $f(\widetilde{\beta}\widetilde{\alpha}\widetilde{\beta}^{-1})=\beta\alpha\beta^{-1} \in {\rm EO}_{(R,I)}(Q,\mathbb{H}(P))$.
    \end{proof}

\vspace{1mm}

\begin{remark}
    By definition of ${\rm EO}_{(R,I)}(Q,\mathbb{H}(P))$ we have, $$[{\rm EO}_{(R,I)}(Q,\mathbb{H}(P)),{\rm EO}_{R}(Q,\mathbb{H}(P))]={\rm EO}_{(R,I)}(Q,\mathbb{H}(P)). $$

    Using the above theorem, now we have the following commutator relation $$[{\rm EO}_{(R,I)}(Q,\mathbb{H}(P)),{\rm O}_{R}(Q \perp \mathbb{H}(P))]={\rm EO}_{(R,I)}(Q,\mathbb{H}(P)),$$ for a free module $P$. This formula is referred as {\it standard commutator formula} in the literature\cite{VavilovStepanov}.
\end{remark}
    
	\section{Relative Dilation Principle and local-global principle}
 
In this section, we aim to prove the dilation principle for the relative DSER elementary orthogonal group and the associated local-global principle.

Let $R$ be a commutative ring in which $2$ is invertible and let $I$ be an ideal of $R$.

\begin{lemma}\label{decomposition1}
The generators of ${\rm EO}_{(R[X],I[X])}(M[X]) \cap {\rm O}_{(R[X],(X))}(M[X],XM[X])$ are of the form 
$$\gamma E'(X\alpha'(X))\gamma^{-1},$$
where $\gamma \in {\rm EO}_{R}(M)$, $E'(X\alpha')=E_{X\alpha'}$ or $E_{X\alpha'}^*$  for $\alpha' \in {\rm Hom}(Q[X],IP[X]) $ or ${\rm Hom}(Q[X],IP^*[X])$.
\end{lemma}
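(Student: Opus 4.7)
The easy direction is to verify that every claimed generator sits in the intersection: for $\gamma \in {\rm EO}_R(M)$ and $\alpha'(X) \in {\rm Hom}(Q[X], IP[X])$ (or the $P^*$ analog), I note that $\gamma E'(X\alpha'(X))\gamma^{-1}$ lies in ${\rm EO}_{(R[X], I[X])}(M[X])$ because $X\alpha'(X)$ still takes values in $IP[X]$, and it lies in ${\rm O}_{(R[X], (X))}(M[X], XM[X])$ because at $X = 0$ it collapses to $\gamma \cdot {\rm Id} \cdot \gamma^{-1} = {\rm Id}$.

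For the reverse inclusion, my plan is to imitate the proof of Lemma~\ref{steinbergrelns} while tracking the ideal $I$. First, starting with $\sigma$ in the intersection, I would unpack membership in ${\rm EO}_{(R[X], I[X])}(M[X])$ to write $\sigma = \prod_k \delta_k E'(\beta_k)\delta_k^{-1}$ with $\delta_k \in {\rm EO}_{R[X]}(M[X])$ and $\beta_k \in {\rm Hom}(Q[X], IP[X])$ (or dual). Next, I would flatten by expanding each $\delta_k$ as a product of elementary generators $E'(\mu_{k,j})$, so that $\sigma = \prod_l E'(\xi_l)$ where each $\xi_l$ is a $\pm\mu_{k,j}$ or a $\beta_k$. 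Then I would apply Suresh's splitting lemma to each factor, writing $\xi_l(X) = \xi_l(0) + X\xi_l^\sharp(X)$ and using
$$E'(\xi_l) \;=\; E'(\xi_l(0)/2)\,E'(X\xi_l^\sharp(X))\,E'(\xi_l(0)/2),$$
and then carry out the telescoping rearrangement from the proof of Lemma~\ref{steinbergrelns} to absorb the constant-in-$X$ factors into successive partial-product conjugators $\gamma_l \in {\rm EO}_R(M)$, obtaining
$$\sigma \;=\; \Bigl(\prod_l \gamma_l\, E'(X\xi_l^\sharp(X))\, \gamma_l^{-1}\Bigr)\cdot \gamma_{\mathrm{end}}$$
with $\gamma_{\mathrm{end}} \in {\rm EO}_R(M)$. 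Specializing at $X = 0$ collapses each $E'(X\xi_l^\sharp)$ to the identity, so $\sigma(0) = \gamma_{\mathrm{end}}$; the hypothesis $\sigma \in {\rm O}_{(R[X], (X))}$ then forces $\gamma_{\mathrm{end}} = {\rm Id}$.

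This produces the desired presentation for those $\xi_l$ that originated from the $\beta_k$'s, since their $\xi_l^\sharp$ lie in ${\rm Hom}(Q[X], IP[X])$. The main obstacle will be handling the $\xi_l$'s coming from the $\mu_{k,j}$'s, whose $\xi_l^\sharp$ a priori lies only in ${\rm Hom}(Q[X], P[X])$. Here I would exploit the matched-pair structure of the $\mu_{k,j}$'s, which appear once with positive sign in the expansion of $\delta_k$ and once with negative sign in that of $\delta_k^{-1}$, together with the standard commutator formula $[{\rm EO}_{(R,I)}, {\rm O}_R] = {\rm EO}_{(R,I)}$ from the Remark following Theorem~\ref{relativenormality}: since the overall product lies in ${\rm EO}_{(R[X], I[X])}$, the residual $\mu$-type contribution must collectively sit in a subgroup generated by ${\rm EO}_R$-conjugates of $E'$-elements whose entries can be taken in $IP[X]$, which are again of the claimed form. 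Assembling the $\beta$-type and rewritten $\mu$-type factors yields the required generating expression for $\sigma$.
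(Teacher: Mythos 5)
Your easy direction is fine, and the telescoping reduction is a reasonable start: after splitting each parameter as $\xi_l(X)=\xi_l(0)+X\xi_l^{\sharp}(X)$ via Suresh's lemma and rearranging as in Lemma~\ref{steinbergrelns}, you do land on $\sigma=\bigl(\prod_l\gamma_l E'(X\xi_l^{\sharp}(X))\gamma_l^{-1}\bigr)\gamma_{\mathrm{end}}$ with constant conjugators $\gamma_l\in{\rm EO}_R(M)$, and $\sigma(0)=Id$ forces $\gamma_{\mathrm{end}}=Id$. The genuine gap is exactly where you flag it, and your proposed repair does not close it. The factors $E'(X\mu_{k,j}^{\sharp}(X))$ arising from the conjugators $\delta_k$ have parameters only in ${\rm Hom}(Q[X],P[X])$, and the ``matched pair'' from $\delta_k$ and $\delta_k^{-1}$ does not cancel after the splitting: the two occurrences acquire different partial-product conjugators and are separated by the $\beta_k$-factor, so what survives is a commutator-type residue, not an inverse pair. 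Your appeal to the standard commutator formula is then circular: knowing that the total product $\sigma$ lies in ${\rm EO}_{(R[X],I[X])}(M[X])$ says nothing about how a particular sub-collection of its factors can be re-presented, and the claim that the residual $\mu$-type contribution ``must collectively sit in a subgroup generated by'' elements of the required shape is precisely the conclusion of the lemma, not a consequence of normality.

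What is actually needed to finish along your route is the commutator calculus of the DSER generators (\cite[Section~3]{Ambily2019}): one must verify that $[E'(X\mu^{\sharp}(X)),\,\gamma E'(X\alpha'(X))\gamma^{-1}]$ is again a product of elementary generators whose parameters lie in $X\cdot{\rm Hom}(Q[X],IP[X])$ or $X\cdot{\rm Hom}(Q[X],IP^{*}[X])$ (the relevant products of an $X$-divisible parameter with an $I$-valued one remain $I$-valued and $X$-divisible), i.e.\ that the subgroup generated by the claimed generators is normalized by ${\rm EO}_{R[X]}(M[X])$. That is the step that lets you absorb the non-$I$-valued $\mu$-type factors, and it is absent from your argument. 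For comparison, the paper's own proof is a two-line assertion that each generator of ${\rm EO}_{(R[X],I[X])}(M[X])$ already has the form $\gamma E'(X\alpha'(X))\gamma^{-1}$ with the congruence condition forcing ${\rm Im}(\alpha')\subseteq IP[X]$; it does not carry out the reduction you attempt, so it too leaves the normalization step implicit, but your write-up cannot be accepted as a proof until that step is supplied.
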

\begin{proof}
    The generators of the group ${\rm EO}_{(R[X],I[X])}(M[X])$ are of the form  $E_{(\gamma,\alpha')}:=\gamma^{}{\rm E}(X\alpha'(X))\gamma^{-1}$ for $\alpha'(X)\in {\rm Hom}(Q[X],P[X])$ or ${\rm Hom}(Q[X],P^*[X])$ such that $E_{(\gamma,\alpha')}$ congruent to $Id$ modulo $I[X]$. Thus $Im(\alpha'(X))\subseteq IP[X]$ or $IP^*[X]$.
\end{proof}

\begin{lemma}[{\rm \cite[Corolary~3.7]{AmbilyRao2020}}]\label{decomposition}
     Let $Q$ be a quadratic $R$-space and $P$ be a finitely generated projective $R$-module. Let $s\in R$ be a non-nilpotent element such that $Q_s$ and $P_s$ are free modules of rank $n$ and $m$ respectively. Let $\gamma$ be the product of elementary generators in ${\rm EO}_{R_s}(Q_s,\mathbb{H}(P_s))$. Then for $x\in R$, $r\geq 0$, $ 1\leq k\leq m$ and $1\leq l\leq n$, there exist $x_t\in R$, $k_t,l_t\in \mathbb{Z}$ with $ 1\leq k_t \leq m$ and $1\leq l_t \leq n$ such that
$$\gamma E'(s^{{2^r}d}x M_{k,l})\gamma^{-1}=\prod_tE'(s^d x_tN_{k_t,l_t}),$$
where $M,N\in {\rm Hom}(Q_s,P_s)$ or ${\rm Hom}(Q_s,P_s)$ and $E'(-)$ denote the elementary generator $E_-$ or $E^*_-$ of ${\rm EO}_{R_s}(Q_s,\mathbb{H}(P_s))$.
\end{lemma}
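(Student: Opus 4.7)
The plan is to argue by induction on the length $\ell$ of $\gamma$ as a word in the DSER elementary generators $E_\alpha$ and $E^*_\beta$, with the reservoir $s^{2^r d}$ playing the role of a budget that can absorb up to $r$ nested halving steps during the reduction.

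For the base case $\ell = 0$, i.e.\ $\gamma = Id$, the conclusion is immediate: write $E'(s^{2^r d} x M_{k,l})$ as the single-factor product $E'(s^d \cdot s^{(2^r-1)d} x \cdot M_{k,l})$, so that $x_1 = s^{(2^r-1)d} x \in R$ does the job. For the inductive step, factor $\gamma = \epsilon \cdot \gamma_0$ with $\epsilon$ a single generator and $\gamma_0$ of length $\ell - 1$, and split
$$\gamma E'(s^{2^r d} x M_{k,l}) \gamma^{-1} \; = \; \epsilon \bigl(\gamma_0 \, E'(s^{2 \cdot 2^{r-1} d} x M_{k,l}) \, \gamma_0^{-1}\bigr) \epsilon^{-1}.$$
Applying the induction hypothesis to the inner conjugate with parameters $(r-1,\, 2d)$ yields a product $\prod_t E'(s^{2d} z_t N_{k_t,l_t})$. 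The problem then reduces to handling a single outer conjugation $\epsilon E'(s^{2d} z N_{k',l'}) \epsilon^{-1}$ and showing it decomposes as a product of factors of the form $E'(s^d y N)$.

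For this single step I would use the DSER commutator formulas of Roy \cite{Roy1968} together with the splitting lemma quoted above. Expanding
$$\epsilon E'(s^{2d} z N) \epsilon^{-1} \; = \; [\epsilon, \, E'(s^{2d} z N)] \cdot E'(s^{2d} z N),$$
the commutator identities for $[E_\alpha, E_\gamma]$, $[E_\alpha, E_\gamma^*]$ and $[E_\alpha^*, E_\gamma^*]$ express the commutator as a product of elementary generators whose parameter is bilinear in the parameters of $\epsilon$ and of $E'(s^{2d} z N)$. Each such term therefore inherits the factor $s^{2d}$; an entry of $\epsilon$ may contribute a denominator $s^{d'}$ with $d' \leq d$, and after clearing denominators the resulting generator has the form $E'(s^{2d - d'} y N')$ with $y \in R$. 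The splitting lemma then peels off one factor of $s^d$ and absorbs the remaining $s^{d - d'}$ into the scalar coefficient, giving the required factor $E'(s^d x_t N_{k_t, l_t})$.

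The main obstacle is the bookkeeping in the commutator expansions. Unlike Chevalley-type generators, the DSER generators involve the symmetrization $\tfrac{1}{2}\alpha\alpha^*$ and the dual-pairing isomorphism $d_{\langle\,,\,\rangle_q}$, so the commutator identities are not purely combinatorial; one must verify that every quadratic correction term that arises really lies in a rank-one Hom-component $N_{k_t,l_t}$ in the chosen bases of $Q_s$ and $P_s$, and carries the required $s^d$ factor. Pairing the induction on $\ell$ with the decreasing budget on $r$ so that $s^{2^r d}$ is never exhausted is the cleanest way to package this; throughout, the standing hypothesis $2 \in R^*$ is what justifies each halving step in the splitting lemma.
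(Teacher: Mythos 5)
This lemma is not proved in the present paper at all: it is imported verbatim as \cite[Corollary~3.7]{AmbilyRao2020}, so there is no in-house argument to compare against, and your proposal is best read as a reconstruction of the proof in that reference. Your structure --- induction on the word length of $\gamma$, splitting off one generator $\epsilon$, applying the inductive hypothesis with the budget $(r-1,2d)$, and reducing to a single conjugation $\epsilon E'(s^{2d}zN)\epsilon^{-1}$ handled by the commutator identities of \cite{Ambily2019} and the splitting lemma --- is exactly the strategy of the cited source, and it is sound. The two points your sketch leaves implicit, and which carry the real weight there, are that $d$ must be taken large compared with the powers of $s$ occurring in the denominators of the finitely many entries of $\gamma$ (this is why the exponent $2^{r}d$, with $r$ tied to the length of $\gamma$, appears in the hypothesis), and that every commutator of two rank-one DSER generators, including those with overlapping indices and the quadratic correction terms coming from $\tfrac{1}{2}\alpha\alpha^{*}$, really does decompose into rank-one generators whose parameters are divisible by the product of the input parameters --- that verification is the content of the relations in \cite{Ambily2019} and cannot be waved through by analogy with Chevalley commutator formulas.
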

\vspace{2mm}

Now we can prove the dilation principle for the relative DSER elementary orthogonal group.

Let $M=Q\perp \mathbb{H}(P)$ where $Q$ is a quadratic $R$-space and $P$ is a finitely generated projective module.

\begin{theorem}\label{dilation}
    Let $(Q,q)$ be a quadratic $R$-space of rank $n \geq 1$,  $P$ be a finitely generated projective $R$-module of rank $m$ and $M=Q\perp \mathbb{H}(P)$. Let $s\in R$ be a non-nilpotent element such that $Q_s$ and $P_s$ are free modules. Also, assume that the matrix corresponding to $Q_s$ is diagonal and the matrix corresponding to $\mathbb{H}(P_s)$ is $\widetilde{\phi}_{2m}.$ Let $\alpha(X) \in {\rm O}_{(R[X],I[X])}(M[X])$ with $\alpha(0)=Id$ and suppose $\alpha(X)_s\in {\rm EO}_{(R_s[X],I_s[X])}(M_s[X])$, then there exists $\hat{\alpha}(X) \in {\rm EO}_{(R[X],I[X])}(M[X])$ and $l>0$ such that $\hat{\alpha}(X)$ localizes to $\alpha(bX)$ for some $b\in (s^l)$ and $\hat{\alpha}(0)=Id$.
\end{theorem}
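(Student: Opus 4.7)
The plan is to adapt the dilation argument for the absolute DSER elementary group from \cite{AmbilyRao2020} to the present relative setting, using Lemma \ref{decomposition1} and Lemma \ref{decomposition} as the main technical ingredients.

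First, since $\alpha(X)_s \in {\rm EO}_{(R_s[X],I_s[X])}(M_s[X])$ and $\alpha(0) = Id$ forces $\alpha(X)_s \in {\rm O}_{(R_s[X],(X))}(M_s[X],XM_s[X])$, the $R_s$-version of Lemma \ref{decomposition1} yields a decomposition
\[
\alpha(X)_s \;=\; \prod_{k=1}^{K} \gamma_k\, E'\bigl(X\alpha'_k(X)\bigr)\, \gamma_k^{-1},
\]
where each $\gamma_k \in {\rm EO}_{R_s}(M_s)$ is a finite product of absolute elementary generators and each $\alpha'_k(X)$ has image in $I_s P_s[X]$ (or in $I_s P_s^*[X]$). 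Using the splitting lemma together with the freeness of $Q_s$ and $P_s$, I would further expand each $E'(X\alpha'_k(X))$ into a product of single-coordinate elementary generators of the form $E'(s^{-N_{k,j}} a_{k,j} X^{p_{k,j}+1} M_{k,j})$, where $a_{k,j} \in I$, $N_{k,j} \geq 0$, and $M_{k,j}$ is a basic homomorphism coming from a matrix unit in ${\rm Hom}(Q_s,P_s)$ or ${\rm Hom}(Q_s,P_s^*)$.

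Next, I would fix an integer $r$ large enough that Lemma \ref{decomposition} applies uniformly to every $\gamma_k$ at level $2^r$, and set $L := \max_{k,j} N_{k,j} + 2^r$. Substituting $X \mapsto s^L X$ converts each factor above into
\[
E'\bigl(s^{(p_{k,j}+1)L - N_{k,j}} a_{k,j} X^{p_{k,j}+1} M_{k,j}\bigr),
\]
whose $s$-exponent is at least $2^r$ by the choice of $L$. The conjugate $\gamma_k\,E'(\cdots)\,\gamma_k^{-1}$ is then in the form required by Lemma \ref{decomposition}, and the lemma rewrites it as a product $\prod_t E'(s^{d_t} x_t X^{p_{k,j}+1} N_t)$ whose coefficients are genuine elements of $R[X]$ with no negative powers of $s$ and no residual $\gamma_k$-conjugation. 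Taking $\hat{\alpha}(X)$ to be the product of all these rewritten generators, viewed inside ${\rm EO}_{R[X]}(Q[X],\mathbb{H}(P[X]))$, I obtain an element that localizes at $s$ to $\alpha(bX)$ with $b = s^L \in (s^L)$, and whose value at $X=0$ is $Id$ because every rewritten generator carries a factor $X^{p_{k,j}+1}$ with $p_{k,j}+1 \geq 1$.

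The main obstacle is verifying that $\hat{\alpha}(X)$ lies in the \emph{relative} group ${\rm EO}_{(R[X],I[X])}(Q[X],\mathbb{H}(P[X]))$ rather than merely in the absolute ${\rm EO}_{R[X]}$. This amounts to checking that each output coefficient $x_t$ produced by Lemma \ref{decomposition} inherits the ideal factor $a_{k,j} \in I$ from the input; this should follow from a careful inspection of the Steinberg-type commutator identities used in the proof of Lemma \ref{decomposition}, since each such identity is $R$-linear in the scalar coefficient being pushed past $\gamma_k$. Bookkeeping of the uniform choice of $L$ and the finite supply of basic homomorphisms then makes the remainder of the argument a direct tracking exercise that mirrors the absolute dilation proof.
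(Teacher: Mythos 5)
Your outline reproduces the absolute dilation argument of \cite{AmbilyRao2020} (decompose via Lemma \ref{decomposition1}, expand into single-coordinate generators, dilate $X\mapsto s^LX$ to clear denominators, rewrite the conjugates $\gamma_k E'(\cdot)\gamma_k^{-1}$ via Lemma \ref{decomposition}), and that part is sound. But the point of Theorem \ref{dilation} is the \emph{relative} conclusion, and that is exactly the step you flag as ``the main obstacle'' and then dispose of with ``this should follow from a careful inspection of the Steinberg-type commutator identities.'' As written, this is a genuine gap: Lemma \ref{decomposition} is stated with outputs $x_t\in R$ and carries no ideal-tracking, the target group ${\rm EO}_{(R[X],I[X])}(Q[X],\mathbb{H}(P[X]))$ is a normal closure whose generators are conjugates $\gamma E'(\tau)\gamma^{-1}$ with only $\tau$ relative, and the commutator calculus of \cite{Ambily2019} involves additive splittings ($E_{\alpha_1+\alpha_2}=E_{\alpha_1/2}E_{\alpha_2}E_{\alpha_1/2}$) as well as multiplicative ones, so one must actually prove a relative analogue of Lemma \ref{decomposition} showing that every output generator either has parameter in $I$ or sits inside an admissible conjugate. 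You assert this rather than prove it, and it is the only part of the theorem that is not already in the absolute case.

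The paper avoids this bookkeeping entirely by a different device: it lifts to the excision ring $R'=R\oplus I$ using Lemma \ref{elementarylift}, writing $\widetilde{\alpha}(X)_{(s,0)}=\prod_k\widetilde{\gamma_k}\,\widetilde{E}'(0,X\alpha_k'(X))\,\widetilde{\gamma_k}^{-1}$ with the relative data carried in the second coordinate $I'=0\oplus I$. Since $I'$ is an ideal of $R'$ and ``first coordinate equal to $0$'' is preserved under all the ring operations occurring in the commutator identities and in Lemma \ref{decomposition}, the dilated element automatically stays in ${\rm EO}_{(R'[X,Y],I'[X,Y])}$; one then specializes $Y=(1,0)$, applies the absolute dilation result of \cite{AmbilyRao2020}, and projects back along $f:R'\to R$ to land in ${\rm EO}_{(R,I)}(Q,\mathbb{H}(P))$. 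If you want to keep your direct route, you must state and prove the $I$-preserving version of Lemma \ref{decomposition} (and of the splitting step); otherwise the clean fix is to insert the excision-ring lift before dilating, which is what the paper does.
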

    \begin{proof}
        Given that $\alpha(X) \in {\rm EO}_{R[X]}(Q[X],\mathbb{H}(P[X]))$ with $\alpha(0)=Id$. For a non-nilpotent element $s\in R$, we have, $\alpha_s(X) \in {\rm EO}_{(R_s[X],I_s[X])}(Q_s[X],\mathbb{H}(P_s[X]))$. Thus by applying Lemma \ref{decomposition1} we have 
        
        $$\alpha(X)_s =\prod_k \gamma_k E'(X\alpha_k'(X))\gamma_k^{-1}$$
        
        \noindent where, $\alpha'(X) \in {\rm Hom}(Q_s[X],I_sP_s[X])$ or ${\rm Hom}(Q_s[X],I_sP_s^*[X])$ and $\gamma_k \in {\rm EO}_{R}(M)$. By Lemma \ref{elementarylift}, there are lifts $\widetilde{\alpha}(X)_{(s,0)} \in {\rm EO}_{(R'_s,I'_s)}(M'_s[X])$ and $\widetilde{\gamma_k} \in {\rm EO}_{R'}(M')$ such that 
        
        $$ \widetilde{\alpha}(X)_{(s,0)}:=\prod_k \widetilde{\gamma_k} \widetilde{E}'(0,X\alpha_k'(X))\widetilde{\gamma_k}^{-1},$$ 
        
        \noindent where $(0,X\alpha'(X)) \in {\rm Hom}(Q'_s[X],I'P'_s[X])$ or ${\rm Hom}(Q'_s[X],I'P'_s[X])$. 
        
        \vspace{2mm}

        \noindent Using the commutator identities for the generators of the DSER elementary orthogonal group in free case (for details see \cite[Section~3]{Ambily2019}), for $l>0$, we have $\widetilde{\alpha}(Y^{2l}X)$ is the product of elementary generators  $E(0,Y^{l}\alpha_{i_t,j_t}(X,Y)/s^{\mu_t})$ and the commutators of the elementary generators of the form $[E'(0, Y^{l}\alpha'_{i_t,j_t}(X,Y)/s^{\mu_t}),E'(-)]$, or $[E'(-),E'(0, Y^{l}\alpha''_{i_t,j_t}(X,Y)/s^{\mu_t})]$  where, $(0,Y^l\alpha_{i_t,j_t}(X,Y)),(0,\alpha'_{i_t,j_t}(X,Y)), (0,\alpha''_{i_t,j_t}(X,Y)) \in {\rm Hom}(Q'[X,Y],I'P'[X,Y]) $.

 \vspace{2mm}

        \noindent As a consequence of Lemma \ref{decomposition}, we have $\alpha(s^{2^rkl}Y^{2^rkl}X)$ is product of $E'((0,s^lY^lW_{i_t,j_t}(X,Y)))$ and the commutators of the form $[E'(0,s^lY^lW'_{i_t,j_t})(X,Y),E'(-)]$,$[E'(-),E'(0,s^lY^lW''_{i_t,j_t})(X,Y)]$ for $W_{i_t,j_t}(X,Y), W'_{i_t,j_t}(X,Y)\in {\rm Hom}(Q[X,Y],IP[X,Y])$ for $r$ large enough.
        
         \vspace{3mm}
        
        Thus we have  
        
        $$\alpha(s^{2^rkl}Y^{2^rkl}X) \in {\rm EO}_{(R'[X,Y],I'[X,Y])}(Q'[X,Y],\mathbb{H}(P'[X,Y])).$$
        

        \noindent By \cite[Lemma~3.8]{AmbilyRao2020}, we have $\hat{\widetilde{\alpha}}_{(s,0)}(X,Y)={\widetilde{\alpha}}_{(s,0)}((b,0)XY^{2d})$. Thus, for $Y=(1,0)$, we get $\hat{\widetilde{\alpha}}_{(s,0)}((b,0)XY^{2d}) \in {\rm EO}_{(R',I')}(M'[X],I'M'[X])$. Now by applying projection, we have $\alpha(bX)\in {\rm EO}_{(R,I)}(Q,\mathbb{H}(P))$. 
        \end{proof}
        \vspace{2mm}
        
The local-global principle for the relative DSER elementary orthogonal group can be obtained from the relative dilation principle as follows:

        \begin{theorem}[Relative local-global principle]\label{RLG}
           Let $Q$ be a quadratic $R$-space of rank $n\geq 1$ and  $P$ be a finitely generated projective module of rank $m\geq 2$ and $M=Q\perp \mathbb{H}(P)$ and let $\alpha(X) \in {\rm O}_{R[X]}(M[X])$. If for all maximal ideal $\mathfrak{m}$ of $R$, $\alpha_{\mathfrak{m}}(X) \in {\rm EO}_{(R_\mathfrak{m}[X],I_{\mathfrak{m}}[X])}(Q_{\mathfrak{m}}[X],\mathbb{H}(P_{\mathfrak{m}}[X]))$, with $\alpha(0)=Id$, then $\alpha(X) \in {\rm EO}_{(R[X],I[X])}(Q[X],\mathbb{H}(P[X]))$.
        \end{theorem}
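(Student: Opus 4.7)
The plan is to execute the standard Quillen patching argument, drawing on the relative dilation principle (Theorem~\ref{dilation}) in place of its absolute analogue. I introduce the ``Quillen ideal''
\begin{equation*}
A:=\bigl\{b\in R : \alpha(X+bT)\,\alpha(X)^{-1}\in{\rm EO}_{(R[X,T],I[X,T])}(Q[X,T],\mathbb{H}(P[X,T]))\bigr\}
\end{equation*}
of $R$ (working in a two-variable polynomial ring), and aim to prove $A=R$. Once this is established, specialising at $X=0$ in the defining relation for $1\in A$ yields $\alpha(T)=\alpha(T)\alpha(0)^{-1}\in{\rm EO}_{(R[T],I[T])}(Q[T],\mathbb{H}(P[T]))$, which is exactly the desired conclusion (after renaming $T\to X$).

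The first task is to check that $A$ is an ideal of $R$. Trivially $0\in A$, and the substitution $T\mapsto rT$ shows $rA\subseteq A$ for every $r\in R$. The additive closure is the key point: for $b_1,b_2\in A$, one uses the factorisation
\begin{equation*}
\alpha(X+(b_1+b_2)T)\alpha(X)^{-1}=\bigl[\alpha(W+b_2T)\alpha(W)^{-1}\bigr]_{W=X+b_1T}\cdot\bigl[\alpha(X+b_1T)\alpha(X)^{-1}\bigr].
\end{equation*}
The right bracket lies in the relative elementary group because $b_1\in A$. For the left bracket, the relation furnished by $b_2\in A$ is an identity in ${\rm EO}_{(R[W,T],I[W,T])}(Q[W,T],\mathbb{H}(P[W,T]))$, and specialising the indeterminate $W$ to the polynomial $X+b_1T\in R[X,T]$ carries it into ${\rm EO}_{(R[X,T],I[X,T])}(Q[X,T],\mathbb{H}(P[X,T]))$. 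Hence $b_1+b_2\in A$.

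The second task is to show that $A$ is contained in no maximal ideal $\mathfrak{m}$ of $R$. The local hypothesis, together with the observation that any expression for $\alpha(X)_{\mathfrak{m}}$ as a product of elementary generators involves only finitely many denominators, produces $s\in R\setminus\mathfrak{m}$ with $\alpha(X)_s\in{\rm EO}_{(R_s[X],I_s[X])}(Q_s[X],\mathbb{H}(P_s[X]))$. I then apply Theorem~\ref{dilation} to the polynomial $\widetilde\alpha(X,T):=\alpha(X+T)\alpha(X)^{-1}$, viewed as a $T$-polynomial over the base ring $R[X]$ with non-nilpotent element $s\in R\subset R[X]$. The hypotheses of the dilation principle pass unchanged from $R$ to $R[X]$ (since $Q[X]_s=Q_s\otimes_R R_s[X]$ remains free with the same diagonal form, and $\mathbb{H}(P[X])_s$ still carries matrix $\widetilde\phi_{2m}$), one has $\widetilde\alpha(X,0)=Id$, and $\widetilde\alpha(X,T)_s$ lies in the relative elementary group by substitution in the expression for $\alpha(X)_s$. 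Theorem~\ref{dilation} therefore produces a power $b=s^N\in R\setminus\mathfrak{m}$ and an element of ${\rm EO}_{(R[X,T],I[X,T])}(Q[X,T],\mathbb{H}(P[X,T]))$ equal to $\widetilde\alpha(X,bT)=\alpha(X+bT)\alpha(X)^{-1}$; that is, $b\in A\setminus\mathfrak{m}$.

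Since $A$ is an ideal meeting the complement of every maximal ideal, $A=R$, so $1\in A$ and the theorem follows. The main technical obstacle is the second task: one must verify carefully that Theorem~\ref{dilation} genuinely applies over the polynomial ring $R[X]$ (so that every rank, freeness and matrix-form condition transfers), and that the output of dilation delivers an honest equality inside the global relative elementary group rather than just a localised identity at $s$. Once these points are settled, the closure and patching steps reduce to the routine substitution manipulations sketched above, and the argument follows the pattern already used for related local-global principles in \cite{AmbilyRao2020, BasuRaoKhanna2020, RRaoSS2017}.
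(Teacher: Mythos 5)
Your proposal is correct and follows essentially the same route as the paper: both apply the relative dilation principle (Theorem~\ref{dilation}) to the difference element $\alpha(X+Y)\alpha(Y)^{-1}$ (your $\widetilde\alpha(X,T)$, the paper's $\beta(X,Y)$) to produce, for each maximal ideal $\mathfrak{m}$, a $b\in(s_{\mathfrak{m}}^{\,d})\setminus\mathfrak{m}$ for which the dilated difference is globally relative-elementary, and then conclude by Quillen patching. The only difference is presentational: you package the patching as the statement that the ``Quillen ideal'' $A$ is all of $R$ (its additive closure being exactly the telescoping identity), whereas the paper writes $1=\sum_i b_{\mathfrak{m}_i}$ and expands $\alpha(X)$ as the explicit telescoping product $\prod_i\beta\bigl(b_{\mathfrak{m}_i}X,\sum_{k>i}b_{\mathfrak{m}_k}X\bigr)$ --- the two are equivalent.
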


        \begin{proof}
        Let $\mathfrak{m}$ be a maximal ideal of the ring $R$. Choose $s_{\mathfrak{m}} \in R\backslash\mathfrak{m}$ such that $$\alpha_{s_{\mathfrak{m}}}(X)\in {\rm EO}_{(R_{s_{\mathfrak{m}}}[X], I_{s_{\mathfrak{m}}}[X])}(Q_{s_{\mathfrak{m}}}[X],\mathbb{H}(P_{s_{\mathfrak{m}}})[X]).$$

        Define $$\beta(X,Y):=\alpha(X+Y)\alpha(Y)^{-1}\in {\rm EO}_{R[X,Y]}(M[X,Y]).$$

        Then clearly $\beta(0,Y)=Id$ and $\beta(X,Y)_{s_{\mathfrak{m}}} \in {\rm EO}_{(R_{s_{\mathfrak{m}}}[X,Y],I_{s_{\mathfrak{m}}}[X,Y])}(M_{s_{\mathfrak{m}}}[X,Y])$. By Theorem \ref{dilation}, we have $d>>0$ such that $\beta(b_{\mathfrak{m}}X,Y) \in {\rm EO}_{(R[X,Y],I[X,Y])}(M[X,Y])$ for $b_{\mathfrak{m}}\in (s_{\mathfrak{m}}^d)$. Since the elements of the form $s_{\mathfrak{m}}^d$ generate the ring $R$, there are elements $b_{\mathfrak{m}_i}\in R$ such that $\sum_{i=1}^r b_{\mathfrak{m}_i} = 1$. Thus $\alpha(X)=\prod_{i=1}^{r-1}\beta(b_{\mathfrak{m}_i}X,\sum_{k=i+1}^rb_{\mathfrak{m}_k}X)\beta(b_{\mathfrak{m}_r}X,0) \in {\rm EO}_{(R[X],I[X])}(M[X],IM[X])$.
        \end{proof}

\section{Equality of DSER elementary orthogonal group and usual elementary orthogonal group in free case}
	
In  \cite{AmbilyRao2020}, A. A. Ambily and R. A. Rao proved the equality of elementary orthogonal group and DSER elementary orthogonal group on the orthogonal sum of a quadratic $R$-space of even dimension and hyperbolic plane as follows.
 
 \begin{lemma}[{\rm \cite[Lemma~2.7]{AmbilyRao2020}}]
Let $Q$ be a free quadratic space of rank $2n$ over $R$, with $n\geq 1$ and the matrix corresponding to the bilinear form be $\tilde{\phi}_{2n}$. Then
$${\rm EO}_R(Q,\mathbb{H}(R)^m)={\rm EO}_{2(n+m)}(R).$$
 \end{lemma}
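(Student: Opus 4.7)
The plan is to identify $M = Q\perp\mathbb{H}(R)^m$ with $R^{2(n+m)}$ carrying the bilinear form $\tilde{\phi}_{2(n+m)}$ by choosing the ordered basis $(e_1,\ldots,e_{2n},p_1,p_1^*,\ldots,p_m,p_m^*)$, so that ${\rm O}_R(M)$ is literally ${\rm O}_{2(n+m)}(R)$ and the permutation $\sigma$ used to define the $oe_{ij}$'s pairs $e_{2i-1}\leftrightarrow e_{2i}$ within $Q$ and $p_j\leftrightarrow p_j^*$ within $\mathbb{H}(R)^m$. After this identification, the two inclusions are established by matching generators.

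For the inclusion ${\rm EO}_R(Q,\mathbb{H}(R)^m)\subseteq{\rm EO}_{2(n+m)}(R)$, I would use the splitting lemma \cite[Lemma~1.2]{Suresh1994} to reduce any generator $E_\alpha$ (resp.\ $E_\beta^*$) to a product of ``coordinate'' elementary transformations in which the underlying map sends one basis vector of $Q$ to a scalar multiple of a single $p_j$ (resp.\ $p_j^*$) and the remaining basis vectors to zero. Writing out the matrix of the resulting $E_\alpha$ on each basis vector---where $\alpha^*=d^{-1}\circ\alpha^t$ is computed by inverting the pairing $\tilde{\phi}_{2n}$, which amounts to applying $\sigma$---shows that the coordinate $E_\alpha$ with $\alpha(e_i)=zp_j$ coincides with the standard generator $oe_{2n+2j-1,\,i}(z)$, and that the coordinate $E_\beta^*$ with $\beta(e_i)=zp_j^*$ coincides with $oe_{2n+2j,\,i}(z)$.

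For the reverse inclusion, I would sort the generators $oe_{ij}(z)$ into two types: \emph{cross} generators, in which one of $i,j$ lies in $\{1,\ldots,2n\}$ and the other in $\{2n+1,\ldots,2(n+m)\}$, and \emph{internal} generators, in which both indices lie on the same side. Using the identity $oe_{ij}(z)=oe_{\sigma(j),\sigma(i)}(-z)$ to flip orientation, every cross generator can be realized as one of the coordinate DSER transformations identified in the previous paragraph. For internal generators, I would apply the Steinberg-type commutator relation $[oe_{ik}(z),oe_{kj}(1)]=oe_{ij}(z)$, choosing the intermediate index $k$ in the complementary part; the hypotheses $n\geq 1$ and $m\geq 1$ guarantee that a suitable $k$ exists (the forbidden values $i,j,\sigma(i),\sigma(j)$ all lie on the opposite side from $k$). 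Both factors of the commutator are cross generators and hence already lie in ${\rm EO}_R(Q,\mathbb{H}(R)^m)$.

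The main obstacle I anticipate is the explicit matrix identification in the forward direction: unfolding $\alpha^*=d^{-1}\circ\alpha^t$ against $\tilde{\phi}_{2n}$ and verifying that for $\alpha(e_i)=zp_j$ one obtains exactly the entry $-z$ at position $(\sigma(i),2n+2j)$---so that the entries match those of $oe_{2n+2j-1,i}(z)$---is where all the signs and indices have to align. The analogous computation for $E_\beta^*$ involves the natural isomorphism $P\to P^{**}$ and must be carried out separately but symmetrically. Once these identifications are in hand, the rest of the argument reduces to the well-documented Steinberg relations in ${\rm EO}_{2(n+m)}(R)$.
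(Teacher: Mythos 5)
Your proposal is correct; note that this paper does not actually prove the lemma but quotes it from \cite[Lemma~2.7]{AmbilyRao2020}, and your argument is precisely the generator-matching strategy used there and in this paper's own proof of the odd-rank analogue: the splitting lemma reduces to rank-one $\alpha$, the coordinate $E_{\alpha}$ and $E^*_{\beta}$ are identified with $oe_{2n+2j-1,\,i}(z)$ and $oe_{2n+2j,\,i}(z)$ (the sign $-z$ does land at position $(\sigma(i),2n+2j)$ as you predict, since $d^{-1}(e_i^*)=e_{\sigma(i)}$ for the form $\tilde{\phi}_{2n}$ and $\alpha\alpha^*=0$ for rank-one $\alpha$), and the internal generators are recovered by the Steinberg commutator through an index $k$ on the opposite side, which exists exactly because $n\geq 1$ and $m\geq 1$. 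No gaps.
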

 
 \noindent Similar result for an odd dimensional quadratic space can be proved as
 \begin{lemma}
     Let $Q$ be a quadratic space of rank $1$. Then $$  {\rm EO}_{R}(Q,\mathbb{H}(R)^m)={\rm EO}_{2m+1}(R).$$
 \end{lemma}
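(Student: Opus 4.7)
The plan is to compute the DSER generators $E_\alpha$ and $E_\beta^*$ explicitly in coordinates and match them, up to a factor of $-2$ in the parameter, with the Calm\`es--Fasel generators $F_i^1(z)$ and $F_i^2(z)$ of ${\rm EO}_{2m+1}(R)$. Fix a generator $q_0$ of $Q$ so that the bilinear form on $Q$ is represented by the $1\times 1$ matrix $\tilde{\phi}_1=(2)$, i.e.\ $\langle q_0,q_0\rangle_q=2$; this is what makes ${\rm O}_R(Q\perp \mathbb{H}(R)^m)$ coincide with ${\rm O}_{2m+1}(R)$ relative to $\tilde{\phi}_{2m+1}$. Let $p_1,\dots,p_m$ be the standard basis of $P=R^m$ and $p_1^*,\dots,p_m^*$ its dual basis. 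The ordered basis $q_0,p_1,p_1^*,\dots,p_m,p_m^*$ of $M$ realizes the bilinear form as $\tilde{\phi}_{2m+1}$; in particular, column $2i$ corresponds to $p_i$ and column $2i+1$ to $p_i^*$.

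Next, I would compute the matrix form of a single-coordinate generator. For $\alpha:Q\to P$ with $\alpha(q_0)=a\,p_i$, the isomorphism $d_{\langle\,,\,\rangle_q}^{-1}:Q^*\to Q$ is $\phi\mapsto \phi(q_0)/2$ (meaningful because $2\in R^*$), which yields $\alpha^*(p_j^*)=(a/2)\delta_{ij}\,q_0$. Substituting into the definition of $E_\alpha$ and reading off the matrix entries gives
\[
E_\alpha \;=\; I_{2m+1} + a\,e_{2i,1} - \tfrac{a}{2}\,e_{1,2i+1} - \tfrac{a^2}{4}\,e_{2i,2i+1},
\]
which is exactly $F_i^1(-a/2)$. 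An entirely analogous computation for $\beta:Q\to P^*$ with $\beta(q_0)=b\,p_i^*$ yields $E_\beta^*=F_i^2(-b/2)$.

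From here, both inclusions follow without further work. For ${\rm EO}_R(Q,\mathbb{H}(R)^m)\subseteq{\rm EO}_{2m+1}(R)$, a general $\alpha:Q\to P$ decomposes coordinatewise as $\alpha=\sum_{i=1}^m \alpha_i$ with $\alpha_i(q_0)=a_i p_i$, and the splitting lemma $E_{\alpha_1+\alpha_2}=E_{\alpha_1/2}E_{\alpha_2}E_{\alpha_1/2}$ writes $E_\alpha$ as a product of the one-coordinate pieces $E_{\alpha_i}=F_i^1(-a_i/2)$; the same argument handles $E_\beta^*$ via $F_i^2$. For the reverse inclusion, the above identification is invertible since $2\in R^*$: every $F_i^1(z)$ arises as $E_\alpha$ by choosing $\alpha(q_0)=-2z\,p_i$, and every $F_i^2(z)$ arises as $E_\beta^*$ by choosing $\beta(q_0)=-2z\,p_i^*$.

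The only delicate point is the bookkeeping in the matrix computation of $E_\alpha$: making sure the factor of $1/2$ coming from $d_{\langle\,,\,\rangle_q}^{-1}$, the sign of the $Q$-component of $\alpha^*(f)$, and the column-indexing convention (with $p_i$ at column $2i$ and $p_i^*$ at column $2i+1$) all line up with the definition of $F_i^1(z)$. Once that identification is pinned down, no further machinery---no local--global principle, no dilation, no Suslin-style tricks---is needed, and the lemma reduces to the two immediate containments above.
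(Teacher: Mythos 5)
Your proof is correct and follows essentially the same route as the paper: both rest on the explicit generator identification $F_i^1(\lambda)=E_{\alpha}$ with $\alpha(q_0)=-2\lambda\,p_i$ (and likewise $F_i^2$ with $E^*_{\beta}$), which your coordinate computation verifies, and both reduce a general $E_\alpha$ to these one-coordinate generators. The only cosmetic difference is that you invoke Suresh's splitting lemma $E_{\alpha_1+\alpha_2}=E_{\alpha_1/2}E_{\alpha_2}E_{\alpha_1/2}$ for that reduction where the paper cites the commutator identities of the DSER group; your version is, if anything, more self-contained, and you rightly make explicit the normalization $\langle q_0,q_0\rangle_q=2$ that the paper leaves implicit.
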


 \begin{proof}
     For $\lambda \in R$, we have the relations  $F_i^1(\lambda) = E_{{\alpha}_{i1}} (-2\lambda)$ and $F_i^2(\lambda) = E^*_{{\alpha}_{i1}} (-2\lambda)$ for the generators $F_i^1$ and $F_i^2$ of  ${\rm EO_{2n+1}(R)}$, where $\alpha_{ij}(\lambda)=\lambda e_{ij} \in {\rm Hom}(Q,P)$ or ${\rm Hom}(Q,P^*)$. Using the commutator identities (see \cite[Section~3]{Ambily2019}) and the above equalities, we have ${\rm EO_{2m+1}(R)} = {\rm EO_{R}(Q, \mathbb{H}(R)^m)}.$

 \end{proof}

\noindent Using \cite[Lemma~3.6]{AARR2023},   and by appending both of the above lemmas, we have the following lemma.

	\begin{lemma} \label{freecase}
		If $(Q,\tilde{\phi}_n)$ is a free quadratic $R$-space of rank $n\geq 1$. Then $${\rm EO}_{R}(Q ,\mathbb{H}(R)^m) = {\rm EO}_{n+2m}(R).$$ In particular, for any ideal $I$ of $R$, the group ${\rm EO}_{(R,I)}(Q ,\mathbb{H}(R)^m) = {\rm EO}_{n+2m}(R,I)$.
	\end{lemma}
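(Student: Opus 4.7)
The plan is to split the statement into three cases according to the parity of $n$, use the two lemmas just stated as the boundary cases, and use \cite[Lemma~3.6]{AARR2023} as the bridge that lets me transfer hyperbolic summands between the $Q$-part and the $\mathbb{H}(R)^m$-part of the decomposition.

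First I would handle the two easy cases directly. If $n=2r$ is even with $r\ge 1$, then the form on $Q$ is $\tilde{\phi}_{2r}$, which is precisely the hyperbolic form, so $(Q,\tilde{\phi}_{2r})\cong \mathbb{H}(R)^r$ and the previous even-rank lemma gives
\[
{\rm EO}_R(Q,\mathbb{H}(R)^m)={\rm EO}_{2(r+m)}(R)={\rm EO}_{n+2m}(R).
\]
If $n=1$, the form on $Q$ is the rank one form $\langle 2\rangle$ and the rank one lemma gives ${\rm EO}_R(Q,\mathbb{H}(R)^m)={\rm EO}_{2m+1}(R)={\rm EO}_{n+2m}(R)$.

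The main case is $n=2r+1$ with $r\ge 1$. I would decompose $(Q,\tilde{\phi}_n)$ orthogonally as $Q=Q_1\perp Q_2$, where $Q_1$ is the free rank one quadratic $R$-space with form $\langle 2\rangle$ and $Q_2$ carries the form $\tilde{\phi}_{2r}$, so $Q_2\cong \mathbb{H}(R)^r$. The key step is then to invoke \cite[Lemma~3.6]{AARR2023}, which allows us to shift a hyperbolic summand from the quadratic part into the hyperbolic part of the DSER decomposition. Applying this $r$ times yields
\[
{\rm EO}_R(Q,\mathbb{H}(R)^m)={\rm EO}_R(Q_1\perp \mathbb{H}(R)^r,\mathbb{H}(R)^m)={\rm EO}_R(Q_1,\mathbb{H}(R)^{r+m}).
\]
Now the rank one lemma applies to the right-hand side, giving ${\rm EO}_R(Q_1,\mathbb{H}(R)^{r+m})={\rm EO}_{2(r+m)+1}(R)={\rm EO}_{n+2m}(R)$, which completes this case.

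Finally, for the relative statement I would observe that the identifications above are built at the level of generators: the generators $E_\alpha$, $E_\beta^*$ of the DSER group are carried to the Suslin--Kopeiko/Fasel--Calm\`es generators $oe_{ij}$, $F_i^1$, $F_i^2$ of the classical elementary orthogonal group in a way that preserves the parameter. Consequently, restricting the parameter to the ideal $I$ yields ${\rm EO}_I(Q,\mathbb{H}(R)^m)={\rm EO}_{n+2m}(I)$, and taking the normal closure in the respective ambient elementary groups (which have already been identified) gives the relative equality ${\rm EO}_{(R,I)}(Q,\mathbb{H}(R)^m)={\rm EO}_{n+2m}(R,I)$. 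The step I expect to need the most care is confirming that \cite[Lemma~3.6]{AARR2023} can be applied in the relative setting as well, i.e.\ that the hyperbolic shift is compatible with the ideal-level restriction so that it intertwines the generators of ${\rm EO}_I$ and ${\rm EO}_{n+2m}(I)$; apart from this, the argument is a straightforward gluing of the two boundary lemmas.
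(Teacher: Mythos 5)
Your proposal is correct and follows essentially the same route as the paper, which simply combines the even-rank lemma and the rank-one lemma via \cite[Lemma~3.6]{AARR2023}; your case split on the parity of $n$, the decomposition $\tilde{\phi}_{2r+1}=\langle 2\rangle\perp\tilde{\phi}_{2r}$, and the hyperbolic-shift step are exactly the intended argument, which the paper leaves implicit. Your generator-level justification of the relative statement (restricting parameters to $I$ and taking normal closures in the already-identified ambient groups) is likewise the standard reading of the ``in particular'' clause, which the paper also does not spell out.
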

	
	\begin{remark}
		Let $R$ be a ring and $I$ be an ideal of $R$. Then by Theorem \ref{NormalityAbsolute}, ${\rm EO}_{2m+1}(R)$ is a normal subgroup of ${\rm SO}_{2m+1}(R)$ for $m\geq 2$. Moreover, using Theorem \ref{relativenormality}, we have ${\rm EO}_{2m+1}(R,I)$ is a normal subgroup of ${\rm SO}_{2m+1}(R,I)$ for $m\geq 3$.
	\end{remark}

	\section{${\rm SK_1}$-analog of the relative DSER elementary orthogonal group}

        In this section, we assume the hyperbolic rank $m\geq 2$ unless it is specified.

              \vspace{2mm}

      In \cite{RRaoSS2017}, R. A. Rao and S. Sharma studied the solvability of $\frac{{\rm SO}_{2m}(R[X])}{{\rm EO}_{2m}(R[X])}$, when $R$ is a local ring. They also studied the solvability of corresponding relative quotient groups. By analogous method, we extend these results to odd dimensional orthogonal groups to study the ${\rm SK_1}$-analog of the relative DSER elementary orthogonal group and the relative transvection groups.
      
      \vspace{2mm}

      To begin with, we shall study certain properties of the relative special orthogonal group ${\rm SO}_{2m+1}(R,I)$ and the quotient group $\frac{{\rm SO}_{2m+1}(R,I)}{{\rm EO}_{2m+1}(R,I)}$ over a local ring $R$. 
      
	\begin{theorem}\label{squareinelementary}
		Let $R$ be a local ring and $I$ be an ideal of $R$. Let $\alpha \in {\rm SO}_{2m+1}(R,I)$. Then $\alpha^2 \in {\rm EO}_{2m+1}(R,I)$.  
	\end{theorem}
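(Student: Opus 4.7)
The plan is to reduce the relative statement to an absolute one over an excision ring, factor via the splitting lemma, and then use the classical spinor-norm fact that squares of elements of the special orthogonal group over a local ring lie in the elementary orthogonal subgroup. First, set $R':=R\oplus I$ and $I':=0\oplus I$. By Theorem~\ref{idealtheorem} and Corollary~\ref{Maximal ideal}, $R'$ is again a local ring, with unique maximal ideal $\mathfrak{m}\oplus I$. Applying Lemma~\ref{orthogonallift}, I would lift $\alpha$ to $\widetilde{\alpha}:=(Id,\alpha-Id)\in {\rm O}_{(R',I')}(M',I'M')$, where $M'=(R')^{2m+1}$. Since $\det\widetilde{\alpha}\in 1+I'$ and $f(\det\widetilde{\alpha})=\det\alpha=1$ together force $\det\widetilde{\alpha}=1$ in $R'$, this lift already lies in ${\rm SO}_{(R',I')}(M',I'M')$.

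Next, I would apply the splitting Lemma~\ref{splitting} with $Q$ the rank-one diagonal quadratic space $\langle 2\rangle$ and $P=(R')^m$; since $R'$ is local, $\dim {\rm Max}(R')=0$, and Lemma~\ref{freecase} identifies the elementary factor with ${\rm EO}_{2m+1}(R')$. This produces a factorisation $\widetilde{\alpha}=\epsilon\cdot\gamma$ with $\epsilon\in {\rm EO}_{2m+1}(R')$ and $\gamma\in {\rm O}_{2m}(R')$ embedded into ${\rm O}_{2m+1}(R')$ as $1\oplus\gamma$. From $\det\epsilon=\det\widetilde{\alpha}=1$ I deduce $\gamma\in {\rm SO}_{2m}(R')$. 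Expanding
\[
\widetilde{\alpha}^2 \;=\; \epsilon\cdot(\gamma\epsilon\gamma^{-1})\cdot\gamma^2,
\]
the conjugate $\gamma\epsilon\gamma^{-1}$ belongs to ${\rm EO}_{2m+1}(R')$ by Theorem~\ref{NormalityAbsolute}. For the third factor, the classical spinor-norm description of $\ker N_s\cap {\rm SO}_{2m}$ over a local ring — precisely the even-orthogonal analogue of the present statement, due to Rao--Sharma \cite{RRaoSS2017} — yields $\gamma^2\in {\rm EO}_{2m}(R')\subseteq {\rm EO}_{2m+1}(R')$. Hence $\widetilde{\alpha}^2\in {\rm EO}_{2m+1}(R')$.

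To descend to the relative group, notice that $\widetilde{\alpha}^2$ still lies in ${\rm O}_{(R',I')}(M',I'M')$. Since $R'/I'\cong R$ is realised as the subring $R\oplus 0\subseteq R'$, Lemma~\ref{steinbergrelns} applies and gives
\[
{\rm O}_{(R',I')}(M',I'M')\cap {\rm EO}_{R'}(2m+1)={\rm EO}_{(R',I')}(2m+1),
\]
so $\widetilde{\alpha}^2\in {\rm EO}_{(R',I')}(2m+1)$. Projecting via $f:R'\to R$ then yields $\alpha^2=f(\widetilde{\alpha}^2)\in {\rm EO}_{2m+1}(R,I)$, as required. The main obstacle is the spinor-norm step, namely the assertion that $\gamma^2\in {\rm EO}_{2m}(R')$. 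While this is standard over a field, on the excision ring $R'$ (which has nontrivial nilpotents) one must either import the kernel description of $N_s$ from the classical literature or else argue directly, by further decomposing $\gamma$ modulo elementaries into a product of rank-two torus elements $\mathrm{diag}(u,u^{-1})\oplus I_{2m-2}$ and checking that each such square is elementary.
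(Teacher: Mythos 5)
Your proposal is correct and follows essentially the same route as the paper: lift to the excision ring $R\oplus I$, split off the elementary part via Lemma~\ref{splitting}, use normality to isolate the square of the non-elementary factor, settle that square by a spinor-norm argument, and descend through Lemma~\ref{steinbergrelns} before projecting back. The only difference is at the step you flag as the main obstacle: where you stop at $\gamma\in{\rm SO}_{2m}(R')$ and cite the even-dimensional Rao--Sharma result, the paper carries the reduction all the way down to a single block $\alpha_s\in{\rm SO}_2(R\oplus I)$ and checks directly that $N_s(\alpha_s^2)=4(r,i)^2$ is a square, invoking Klingenberg's theorem --- precisely the ``rank-two torus'' fallback you sketch at the end.
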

	
	\begin{proof}
 
		Let $\alpha \in {\rm SO}_{2m+1}(R,I) \subseteq {\rm O}_{2m+1}(R,I)={\rm Ker}({\rm O}_{2m+1}(R) \rightarrow {\rm O}_{2m+1}(R/I))$. Then there exists $\alpha' \in M_{2m+1}(I) $ such that $\alpha=Id+ \alpha'$. Now,  consider $\widetilde{\alpha}=(Id, \alpha') \in M_{2m+1}(R\oplus I)$. Since $\alpha^t \tilde{\phi}_{2m+1} \alpha = (Id+\alpha')^t\tilde{\phi}_{2m+1}(Id+\alpha') \in {\rm SO}_{2m+1}(R)$, we have $$\widetilde{\alpha}^t \tilde{\phi}_{2m+1} \widetilde{\alpha} = (Id, \alpha')^t\tilde{\phi}_{2m+1}(Id, \alpha')=\tilde{\phi}_{2m+1}.$$ Consequently, we get $ \widetilde{\alpha} \in  {\rm SO}_{2m+1}(R\oplus I,0\oplus I)$.
		
		\vspace{2mm}
		
		Considering the Lemma \ref{splitting}, for $n=1$, we have ${\rm SO}_{2m+1}(R\oplus I)={\rm SO}_2(R \oplus I)\cdot{\rm EO}_{2m+1}(R\oplus I)$ and by Corollary \ref{Maximal ideal}, $R\oplus I $ is a local ring. In view of \cite[Lemma~2.2]{AmbilyRao2020}, every element of ${\rm SO}_2(R\oplus I)$ is of the form $$\alpha_s=\begin{pmatrix}
			(r,i)&0\\0&(r^{-1},j)\\
		\end{pmatrix},$$ for some $j$ with $rj+r^{-1}i+ij=0$, where $r^{-1}$ is the inverse of $r$. Thus every element $\widetilde{\alpha} \in {\rm SO}_{2m+1}(R\oplus I) $ is of the form $$\widetilde{\alpha}=\alpha_s\alpha_e,$$ for some $\alpha_s \in {\rm SO}_{2}(R\oplus I)$ and $\alpha_e \in {\rm EO}_{2m+1}(R\oplus I)$. Then by the normality of the elementary orthogonal group ${\rm EO}_{2m+1}(R\oplus I) $ in ${\rm SO}_{2m+1}(R\oplus I)$, we have  $$\widetilde{\alpha}^2=\alpha_s^2\alpha_e',$$ for some $\alpha_e' \in {\rm EO}_{2m+1}(R\oplus I)$. Then the spinor norm of $$\alpha_s^2=\begin{pmatrix}
			(r,i)^2&0\\0&(r^{-1},j)^2\\
		\end{pmatrix}$$  is $4(r,i)^2$ which is a square in $(R\oplus I)^*$. On account of \cite[Theorem~4]{Klingenberg1961}, we have $\alpha_s^2 \in {\rm EO}_{2}(R\oplus I)$. Thus $\widetilde{\alpha}^2 \in {\rm EO}_{2m+1}(R\oplus I)$. As a consequence of Lemma \ref{steinbergrelns}, $\widetilde{\alpha}^2 \in {\rm EO}_{2m+1}(R\oplus I,0\oplus I)$.
		
		\vspace{2mm}
		
		Thus $\widetilde{\alpha}^2 = \prod_{}^{}F^{i_n}_{j_n}(r_n,t_n)F^{k_n}_{l_n}(0,s_n){F^{i_n}_{j_n}}^{-1}(r_n,t_n)$, where $ F^{i_n}_{j_n}(r_n,t_n) $ and $F^{k_n}_{l_n}(0,s_n)$ are some elementary generators of ${\rm EO}_{2m+1}(R\oplus I)$ and ${\rm EO}_{2m+1}(0 \oplus I)$ for $r_n,t_n\in R$ and $s_n \in I$. Taking the projection of $\tilde{\alpha}^2$  onto ${\rm EO}_{2m+1}(R,I)$, we get $$\alpha^2=\prod_{}^{}F^{i_n}_{j_n}(r_n+t_n)F^{k_n}_{l_n}(s_n){F^{i_n}_{j_n}}^{-1}(r_n+t_n) \in {\rm EO}_{2m+1}(R,I).$$
  
	\end{proof}
	
	\begin{corollary}\label{Abeliangrouprelative}
		Let $R$ be a local ring and $I$ be an ideal of $R$. Then every element of the group $\frac{{\rm SO}_{2m+1}(R,I)}{{\rm EO}_{2m+1}(R,I)}$ has order 2.
	\end{corollary}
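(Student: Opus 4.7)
The plan is to deduce this corollary directly from Theorem \ref{squareinelementary}. The quotient $\frac{{\rm SO}_{2m+1}(R,I)}{{\rm EO}_{2m+1}(R,I)}$ is well-defined as a group because Theorem \ref{relativenormality} (together with Lemma \ref{freecase}) guarantees that ${\rm EO}_{2m+1}(R,I)$ is a normal subgroup of ${\rm SO}_{2m+1}(R,I)$, so passing to cosets is legitimate.

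For any coset representative $\alpha \in {\rm SO}_{2m+1}(R,I)$, Theorem \ref{squareinelementary} asserts that $\alpha^{2} \in {\rm EO}_{2m+1}(R,I)$. Writing $\overline{\alpha}$ for the class of $\alpha$ in the quotient, this translates immediately to $\overline{\alpha}^{\,2} = \overline{\alpha^{2}} = \overline{Id}$. Hence every element of the quotient satisfies $x^{2} = e$, so its order divides $2$ and is therefore either $1$ or $2$. In the statement of the corollary, ``order 2'' should be read in the weak sense of order dividing $2$ (the identity coset is of course the only element of order $1$). No further argument or computation is required; the real content has already been packed into Theorem \ref{squareinelementary}, and this corollary is simply a restatement of that theorem at the level of the quotient group. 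Accordingly, I expect no genuine obstacle in writing out the proof.
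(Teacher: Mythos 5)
Your proposal is correct and matches the paper exactly: the paper states this as an immediate corollary of Theorem~\ref{squareinelementary} with no further argument, which is precisely your reading (including the caveat that ``order 2'' means order dividing 2). Nothing more is needed.
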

	\begin{lemma} \label{Abeliangroup}
		Let $R$ be a local ring. Then $\frac{{\rm SO}_{2m+1}(R)}{{\rm EO}_{2m+1}(R)} $ is abelian.
	\end{lemma}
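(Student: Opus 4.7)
The plan is to apply Lemma~\ref{splitting} iteratively to reduce ${\rm SO}_{2m+1}(R)$ modulo ${\rm EO}_{2m+1}(R)$ to a quotient of the abelian group ${\rm SO}_2(R)$.

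Since $R$ is local, $\dim {\rm Max}(R)=0$, so Lemma~\ref{splitting} applies for any hyperbolic rank $\geq 1$. First I would apply it with a rank-$1$ quadratic space $Q$ (diagonalizable with form $(2)$, since $R$ is local) and hyperbolic part $\mathbb{H}(R)^m$; combined with Lemma~\ref{freecase}, this yields
$${\rm O}_{2m+1}(R) = {\rm EO}_{2m+1}(R) \cdot {\rm O}_{2m}(R).$$
Then I would iterate Lemma~\ref{splitting} on the factor ${\rm O}_{2m}(R)$, taking $Q=\mathbb{H}(R)$ at each step to peel off one hyperbolic summand at a time. Using the inclusions ${\rm EO}_{2(k-1)}(R)\subseteq {\rm EO}_{2k}(R)$ and Lemma~\ref{freecase}, successive applications collapse to
$${\rm O}_{2m+1}(R) = {\rm EO}_{2m+1}(R) \cdot {\rm O}_2(R).$$
Restricting to determinant $1$ (which is preserved inside the ${\rm EO}$ factor) then gives
$${\rm SO}_{2m+1}(R) = {\rm EO}_{2m+1}(R) \cdot {\rm SO}_2(R).$$

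By Theorem~\ref{NormalityAbsolute}, ${\rm EO}_{2m+1}(R)$ is normal in ${\rm SO}_{2m+1}(R)$, so the inclusion ${\rm SO}_2(R)\hookrightarrow {\rm SO}_{2m+1}(R)$ descends to a surjective group homomorphism ${\rm SO}_2(R)\twoheadrightarrow {\rm SO}_{2m+1}(R)/{\rm EO}_{2m+1}(R)$. A direct calculation with the hyperbolic form $\tilde{\phi}_2$ (using that $2\in R^*$) shows ${\rm SO}_2(R)=\{{\rm diag}(u,u^{-1}):u\in R^*\}\cong R^*$, which is abelian. Therefore the quotient, being a homomorphic image of an abelian group, is abelian.

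There is no serious obstacle: the only mildly technical step is the iterated application of the splitting lemma and the bookkeeping of the embeddings ${\rm EO}_{2k}(R)\subseteq {\rm EO}_{2k+1}(R)$ and ${\rm EO}_{2k}(R)\subseteq {\rm EO}_{2k+2}(R)$, but this essentially mirrors the decomposition already used in the proof of Theorem~\ref{squareinelementary}. Nothing beyond Lemma~\ref{splitting}, Lemma~\ref{freecase}, and Theorem~\ref{NormalityAbsolute} is required.
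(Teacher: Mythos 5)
Your proposal is correct and follows essentially the same route as the paper: use Lemma~\ref{splitting} (with Lemma~\ref{freecase}) to obtain coset representatives in the abelian group ${\rm SO}_2(R)$, then conclude via the normality of ${\rm EO}_{2m+1}(R)$. The only difference is cosmetic: you peel off hyperbolic planes one at a time, whereas a single application of Lemma~\ref{splitting} with $Q$ taken to be the rank-$(2m-1)$ part already yields ${\rm SO}_{2m+1}(R)={\rm SO}_2(R)\cdot{\rm EO}_{2m+1}(R)$ directly.
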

 
	\begin{proof}
 
		Let $\widetilde{\alpha},\widetilde{\beta} \in\frac{{\rm SO}_{2m+1}(R)}{{\rm EO}_{2m+1}(R)}$. Then by Lemma \ref{splitting}, there exist $\alpha',\beta' \in {\rm SO}_2(R)$  such that $\alpha'$ and $\beta'$ are two coset representatives of  $\widetilde{\alpha},\widetilde{\beta}$ in $\frac{{\rm SO}_{2m+1}(R)}{{\rm EO}_{2m+1}(R)}$. Then, by the normality of ${\rm EO}_{2m+1}(R)$ in ${\rm SO}_{2m+1}(R)$ and the commutativity of ${\rm SO}_{2}(R)$, the elements $\widetilde{\alpha}$ and $\widetilde{\beta}$ commutes.
  
	\end{proof}

Let $\alpha, \beta \in {\rm O}_n(R)$. The matrices $\alpha$ and $\beta$ are said to be elementarily equivalent if $\alpha\beta^{-1} \in {\rm EO}_n(R)$.
 
	\begin{lemma}
		Let $R$ be a local ring and $\alpha(X),\beta(X) \in [{\rm SO}_{2m+1}(R[X]),{\rm SO}_{2m+1}(R[X])]$. Then the commutator $[\alpha(X),\beta(X)]$ is elementarily equivalent to $[\alpha(X)\alpha(0)^{-1},\beta(X)\beta(0)^{-1}]$ for $m\geq 3$.
	\end{lemma}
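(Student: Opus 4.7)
The plan is to reduce the problem to the quotient group $\overline{G} := {\rm SO}_{2m+1}(R[X])/{\rm EO}_{2m+1}(R[X])$ and show that both commutators map to the same element there. The key observation is that the hypothesis $\alpha(X), \beta(X) \in [{\rm SO}_{2m+1}(R[X]), {\rm SO}_{2m+1}(R[X])]$ forces the constant terms $\alpha(0)$ and $\beta(0)$ to already be elementary.

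First I would set $\alpha_0 := \alpha(0)$ and $\beta_0 := \beta(0)$, and note that the evaluation-at-$0$ map ${\rm SO}_{2m+1}(R[X]) \to {\rm SO}_{2m+1}(R)$ is a group homomorphism and therefore carries the derived subgroup into the derived subgroup. Hence $\alpha_0, \beta_0 \in [{\rm SO}_{2m+1}(R), {\rm SO}_{2m+1}(R)]$. Applying Lemma \ref{Abeliangroup}, which asserts that $\frac{{\rm SO}_{2m+1}(R)}{{\rm EO}_{2m+1}(R)}$ is abelian for the local ring $R$, we conclude that the full derived subgroup of ${\rm SO}_{2m+1}(R)$ lies inside ${\rm EO}_{2m+1}(R)$, and in particular $\alpha_0, \beta_0 \in {\rm EO}_{2m+1}(R) \subseteq {\rm EO}_{2m+1}(R[X])$.

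Next, using the normality of ${\rm EO}_{2m+1}(R[X])$ in ${\rm SO}_{2m+1}(R[X])$ (Theorem \ref{NormalityAbsolute} applied over the polynomial ring, together with Lemma \ref{freecase} to identify the DSER elementary subgroup with ${\rm EO}_{2m+1}$), the quotient $\overline{G}$ is a well-defined group. Setting $\alpha'(X) := \alpha(X)\alpha_0^{-1}$ and $\beta'(X) := \beta(X)\beta_0^{-1}$, the fact that $\alpha_0^{\pm 1}, \beta_0^{\pm 1} \in {\rm EO}_{2m+1}(R[X])$ gives $\overline{\alpha'(X)} = \overline{\alpha(X)}$ and $\overline{\beta'(X)} = \overline{\beta(X)}$ in $\overline{G}$. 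Consequently the two commutators $[\alpha(X), \beta(X)]$ and $[\alpha'(X), \beta'(X)]$ project to the same element of $\overline{G}$, which is precisely the statement that their product $[\alpha(X), \beta(X)] \cdot [\alpha'(X), \beta'(X)]^{-1}$ lies in ${\rm EO}_{2m+1}(R[X])$.

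No step here looks genuinely hard; the content of the argument is the combination of two inputs: evaluation at $X = 0$ preserves the derived subgroup, and Lemma \ref{Abeliangroup} turns elements of the derived subgroup over $R$ into elementary matrices. The hypothesis $m \geq 3$ is used only to guarantee the normality results needed to form $\overline{G}$ as a group and to freely pass elements of ${\rm EO}_{2m+1}(R[X])$ across products.
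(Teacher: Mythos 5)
Your argument is correct and is essentially the paper's own proof: both rest on the two facts that $\alpha(0),\beta(0)\in[{\rm SO}_{2m+1}(R),{\rm SO}_{2m+1}(R)]\subseteq {\rm EO}_{2m+1}(R)$ (via evaluation at $X=0$ and Lemma~\ref{Abeliangroup}) and that ${\rm EO}_{2m+1}(R[X])$ is normal in ${\rm SO}_{2m+1}(R[X])$. The paper simply writes out by hand the explicit product decomposition of $[\alpha(X)\alpha(0)^{-1},\beta(X)\beta(0)^{-1}]$ into $[\alpha(X),\beta(X)]$ times conjugates of $\alpha(0)^{\pm1},\beta(0)^{\pm1}$, which is exactly what your passage to the quotient group $\overline{G}$ encapsulates.
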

	
	\begin{proof}
 
For $\alpha(X),\beta(X) \in [{\rm SO}_{2m+1}(R[X]),{\rm SO}_{2m+1}(R[X])]$, we have
		\begin{align*}
			&[\alpha(X)\alpha(0)^{-1},\ \beta(X)\beta(0)^{-1}]\\
   &= \alpha(X)\alpha(0)^{-1}\beta(X)\beta(0)^{-1}\alpha(0)\alpha(X)^{-1}\beta(0)\beta(X)^{-1}\\
			&=(\alpha(X)\beta(X)\alpha(X)^{-1}\beta(X)^{-1})(\beta(X)\alpha(X)\beta(X)^{-1}\alpha(0)^{-1}\beta(X)\alpha(X)^{-1}\beta(X)^{-1})\\
			&\quad\ (\beta(X)\alpha(X)\beta(0)^{-1}\alpha(0)\alpha(X)^{-1}\beta(X)^{-1})\beta(X)\beta(0)\beta(X)^{-1}.
          \end{align*}

          \vspace{2mm}

\noindent where $\alpha(0),\beta(0) \in [{\rm SO}_{2m+1}(R),{\rm SO}_{2m+1}(R)]\subseteq {\rm EO}_{2m+1}(R)\subseteq {\rm EO}_{2m+1}(R[X])$. For $ m\geq 3$, we have ${\rm EO}_{2m+1}(R[X])$ is a normal subgroup of ${\rm SO}_{2m+1}(R[X])$. Consequently, the commutator $$[\alpha(X)\alpha(0)^{-1},\ \beta(X)\beta(0)^{-1}] \in [\alpha(X),\beta(X)]{\rm EO}_{2m+1}(R[X]).$$\end{proof}
\vspace{1mm}
In this case, we say that the commutator $[\alpha(X)\alpha(0)^{-1},\ \beta(X)\beta(0)^{-1}]$ is elementarily equivalent to the commutator $[\alpha(X),\beta(X)]$.

\vspace{.2cm}

	Now we can prove the solvability of ${\rm SK}_1$ of the usual elementary orthogonal group and the corresponding relative elementary group over the polynomial ring over a local ring. The local-global principle for elementary orthogonal group plays the key role in proving the following results.

	\begin{theorem}\label{SK_1analogfree}
		Let $R$ be a local ring. Then the quotient group $\frac{{\rm SO}_{2m+1}(R[X])}{{\rm EO}_{2m+1}(R[X])}$ is solvable of length at most 2.
	\end{theorem}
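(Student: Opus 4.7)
The plan is to reduce the odd-dimensional case to the even-dimensional case of Rao--Sharma (Theorem \ref{RRSSSol}) by means of the splitting decomposition in Lemma \ref{splitting}; the standing hypothesis $m \ge 2$ makes this reduction available.

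First, I would apply Lemma \ref{splitting} over the polynomial ring $R[X]$, taking $Q$ to be the free rank-one quadratic $R[X]$-space with bilinear form $(2)$ (trivially diagonal) and hyperbolic sum $\mathbb{H}(R[X])^m$. Since $R$ is local, $\dim{\rm Max}(R[X]) \le 1$, so the hypothesis $m \ge \dim{\rm Max}(R[X]) + 1$ of Lemma \ref{splitting} is met. Combining the splitting with Lemma \ref{freecase}, which identifies the free DSER elementary orthogonal group with the classical elementary orthogonal group, yields
$${\rm O}_{2m+1}(R[X]) = {\rm EO}_{2m+1}(R[X]) \cdot {\rm O}_{2m}(R[X]),$$
with ${\rm O}_{2m}(R[X])$ embedded as the stabilizer of the first basis vector. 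As every matrix in ${\rm EO}_{2m+1}$ has determinant one, intersecting with ${\rm SO}_{2m+1}(R[X])$ produces
$${\rm SO}_{2m+1}(R[X]) = {\rm EO}_{2m+1}(R[X]) \cdot {\rm SO}_{2m}(R[X]).$$

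Next, I would verify that under the standard embedding ${\rm SO}_{2m}(R[X]) \hookrightarrow {\rm SO}_{2m+1}(R[X])$ on the last $2m$ coordinates, the generators $oe_{ij}(z)$ of ${\rm EO}_{2m}(R[X])$ correspond (up to sign) to the generators $F^3_{i,j}(z), F^4_{i,j}(z), F^5_{i,j}(z)$ of ${\rm EO}_{2m+1}(R[X])$, which are expressed as commutators of $F^1, F^2$ in the preliminaries and hence lie in ${\rm EO}_{2m+1}$. Consequently ${\rm EO}_{2m}(R[X]) \subseteq {\rm EO}_{2m+1}(R[X])$, so the surjection ${\rm SO}_{2m}(R[X]) \twoheadrightarrow \frac{{\rm SO}_{2m+1}(R[X])}{{\rm EO}_{2m+1}(R[X])}$ coming from the decomposition above kills ${\rm EO}_{2m}(R[X])$, yielding an induced surjection
$$\frac{{\rm SO}_{2m}(R[X])}{{\rm EO}_{2m}(R[X])} \twoheadrightarrow \frac{{\rm SO}_{2m+1}(R[X])}{{\rm EO}_{2m+1}(R[X])}.$$

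To conclude, I would invoke Theorem \ref{RRSSSol}: the source of this surjection is solvable of length at most $2$, and since solvability of length at most $2$ passes to quotients, the same bound holds for the target. The main obstacles -- both ultimately routine -- are the dimension bookkeeping needed to apply Lemma \ref{splitting} over $R[X]$ and the generator matching justifying ${\rm EO}_{2m}(R[X]) \subseteq {\rm EO}_{2m+1}(R[X])$; once these are dispatched, the result falls out of the even case of Rao--Sharma.
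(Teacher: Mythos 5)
There is a genuine gap at the very first step: Lemma \ref{splitting} cannot be applied over $R[X]$ under the standing hypothesis $m\geq 2$. Your claim that $\dim{\rm Max}(R[X])\leq 1$ because $R$ is local is false in general. For a Noetherian local ring $(R,\mathfrak{m})$ of Krull dimension $d$, every prime of the form $\mathfrak{p}[X]$ is an intersection of maximal ideals of $R[X]$ (the Jacobson radical of $(R/\mathfrak{p})[X]$ vanishes, since a polynomial $f$ with $1-Xf$ a unit must have nilpotent coefficients), so a chain $\mathfrak{p}_0[X]\subset\cdots\subset\mathfrak{m}[X]\subset(\mathfrak{m},X)$ of such primes shows $\dim{\rm Max}(R[X])=d+1$. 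The hypothesis $m\geq\dim{\rm Max}(R[X])+1$ of Lemma \ref{splitting} therefore forces $m\geq d+2$, and your global decomposition ${\rm SO}_{2m+1}(R[X])={\rm EO}_{2m+1}(R[X])\cdot{\rm SO}_{2m}(R[X])$ is available only when $\dim R\leq m-2$, not for an arbitrary local ring. The remainder of your reduction --- the inclusion ${\rm EO}_{2m}(R[X])\subseteq{\rm EO}_{2m+1}(R[X])$ via the $F^3,F^4,F^5$ generators, the induced surjection of quotient groups, and the fact that solvability of length at most $2$ passes to quotients --- is sound, but it all rests on this unavailable splitting.

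The paper's proof uses the splitting only over local rings, where $\dim{\rm Max}=0$ and Lemma \ref{splitting} applies for every $m\geq 1$: this yields Lemma \ref{Abeliangroup}, i.e.\ $[{\rm SO}_{2m+1}(S),{\rm SO}_{2m+1}(S)]\subseteq{\rm EO}_{2m+1}(S)$ for $S$ local, in particular for $S=R[X]_{\mathfrak{m}}$ at each maximal ideal $\mathfrak{m}$ of $R[X]$. Given $\alpha(X),\beta(X)$ in the commutator subgroup, normalized so that $\alpha(0)=\beta(0)=Id$, one sets $\gamma(X,T):=[\alpha(TX),\beta(X)]$, observes $\gamma(X,0)=Id$, checks that each localization $\gamma(X,T)_{\mathfrak{m}}$ is elementary (since $\beta(X)_{\mathfrak{m}}$ is elementary and the elementary subgroup is normal), and then applies the local-global principle (Lemma \ref{ALG}) in the variable $T$ to get $\gamma(X,1)=[\alpha(X),\beta(X)]\in{\rm EO}_{2m+1}(R[X])$. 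If you wish to keep your reduction-to-the-even-case strategy, you would have to carry it out after localizing and still invoke a local-global argument to return to $R[X]$, at which point it offers no simplification over the paper's route.
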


	\begin{proof}
		Let $\alpha(X),\beta(X) \in [{\rm SO}_{2m+1}(R[X]),{\rm SO}_{2m+1}(R[X])]$. To prove the length of solvability is 2, we need to prove $[\alpha(X),\beta(X)] \in {\rm EO}_{2m+1}(R[X])$. Without loss of generality, suppose $\alpha(0)=\beta(0)=Id$. (If not, since $[\alpha(X)\alpha(0)^{-1},\beta(X)\beta(0)^{-1}]$ is elementarily equivalent to $[\alpha(X),\beta(X)]$, the result follows.) Define $\gamma(X,T):= [\alpha(TX),\beta(X)]$. Then  $\gamma(X,0)=Id$ and for any maximal ideal $\mathfrak{m}$ of $R[X]$, we have $\gamma(X,T)_{\mathfrak{m}} = [\alpha(X,T)_{\mathfrak{m}}, \beta(X)_{\mathfrak{m}}]$. Furthermore, we have $ \beta(X)_{\mathfrak{m}}\in [{\rm SO}_{2m+1}(R[X]_{\mathfrak{m}}),{\rm SO}_{2m+1}(R[X]_{\mathfrak{m}})]\subseteq {\rm EO}_{2m+1}(R[X]_{\mathfrak{m}})$. By the normality of the elementary subgroup ${\rm EO}_{2m+1}(R[X]_{\mathfrak{m}}[T])$ in ${\rm SO}_{2m+1}(R[X]_{\mathfrak{m}}[T])$, we have $ \gamma(X,T)_{\mathfrak{m}} \in {\rm EO}_{2m+1}(R[X]_{\mathfrak{m}}[T])$. Then by Lemma \ref{ALG}, $\gamma(X,T)\in {\rm EO}_{2m+1}(R[X,T])$.  For $T=1,$ we get $\gamma(X,1)=[\alpha(X),\beta(X)] \in {\rm EO}_{2m+1}(R[X]).$
		
	\end{proof}
	\begin{theorem} \label{relativesolvabilityodd}
		Let $R$ be a local ring. Then the quotient group $\frac{{\rm SO}_{2m+1}(R[X],I[X])}{{\rm EO}_{2m+1}(R[X],I[X])}$ is solvable of length at most 2.
	\end{theorem}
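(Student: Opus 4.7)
The plan is to mirror the proof of Theorem \ref{SK_1analogfree}, using the relative analogues established earlier in this section. Concretely, it suffices to show that for every pair $\alpha(X), \beta(X) \in [{\rm SO}_{2m+1}(R[X], I[X]), {\rm SO}_{2m+1}(R[X], I[X])]$, the commutator $[\alpha(X), \beta(X)]$ lies in ${\rm EO}_{2m+1}(R[X], I[X])$, which is exactly the second-derived condition needed for solvability length at most $2$.

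First I would reduce to the case $\alpha(0) = \beta(0) = Id$. Evaluation at $X=0$ places $\alpha(0)$ and $\beta(0)$ in $[{\rm SO}_{2m+1}(R, I), {\rm SO}_{2m+1}(R, I)]$, and by Corollary \ref{Abeliangrouprelative} the quotient $\frac{{\rm SO}_{2m+1}(R, I)}{{\rm EO}_{2m+1}(R, I)}$ is an abelian group of exponent two (a group in which every element squares to the identity is necessarily abelian), hence $\alpha(0), \beta(0) \in {\rm EO}_{2m+1}(R, I) \subseteq {\rm EO}_{2m+1}(R[X], I[X])$. The algebraic identity from the elementary-equivalence lemma preceding Theorem \ref{SK_1analogfree}, together with the relative normality of ${\rm EO}_{2m+1}(R[X], I[X])$ in ${\rm SO}_{2m+1}(R[X], I[X])$ (a consequence of Theorem \ref{relativenormality} combined with Lemma \ref{freecase}), then shows that $[\alpha(X), \beta(X)]$ and $[\alpha(X)\alpha(0)^{-1}, \beta(X)\beta(0)^{-1}]$ differ by an element of ${\rm EO}_{2m+1}(R[X], I[X])$, so we may assume the normalization.

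Next I would introduce the homotopy $\gamma(X, T) := [\alpha(TX), \beta(X)] \in {\rm SO}_{2m+1}(R[X, T], I[X, T])$, which satisfies $\gamma(X, 0) = Id$. For any maximal ideal $\mathfrak{n}$ of $R[X]$, the ring $R[X]_{\mathfrak{n}}$ is local, so Corollary \ref{Abeliangrouprelative} applied there yields
$$\beta(X)_{\mathfrak{n}} \in [{\rm SO}_{2m+1}(R[X]_{\mathfrak{n}}, I[X]_{\mathfrak{n}}), {\rm SO}_{2m+1}(R[X]_{\mathfrak{n}}, I[X]_{\mathfrak{n}})] \subseteq {\rm EO}_{2m+1}(R[X]_{\mathfrak{n}}, I[X]_{\mathfrak{n}}).$$
Invoking Theorem \ref{relativenormality} over $R[X]_{\mathfrak{n}}[T]$ to absorb the conjugation by $\alpha(TX)_{\mathfrak{n}}$, we conclude $\gamma(X, T)_{\mathfrak{n}} \in {\rm EO}_{2m+1}(R[X]_{\mathfrak{n}}[T], I[X]_{\mathfrak{n}}[T])$. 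The relative local-global principle (Theorem \ref{RLG}), taken with base ring $R[X]$, ideal $I[X]$, and new polynomial variable $T$, then upgrades this to $\gamma(X, T) \in {\rm EO}_{2m+1}(R[X, T], I[X, T])$. Specializing at $T=1$ gives $[\alpha(X), \beta(X)] = \gamma(X, 1) \in {\rm EO}_{2m+1}(R[X], I[X])$, as required. The main point to verify carefully is the switch of base ring in the local-global step, ensuring that localizing $R[X]$ at a maximal ideal produces a local ring to which Corollary \ref{Abeliangrouprelative} genuinely applies; everything else is a direct relativization of the free-case argument.
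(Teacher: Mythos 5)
Your proposal is correct and follows essentially the same route as the paper's proof: the same homotopy $\gamma(X,T)=[\alpha(TX),\beta(X)]$, the same localization at maximal ideals of $R[X]$ combined with the exponent-two observation of Corollary \ref{Abeliangrouprelative} and relative normality, and the same application of Theorem \ref{RLG} followed by specialization at $T=1$. You additionally spell out the justifications for the reduction to $\alpha(0)=\beta(0)=Id$ and for the containment $[{\rm SO}_{2m+1},{\rm SO}_{2m+1}]\subseteq {\rm EO}_{2m+1}$ over the localized ring, which the paper leaves implicit.
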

 
	\begin{proof}
 
		Let $\alpha(X),\beta(X) \in [{\rm SO}_{2m+1}(R[X],I[X]),{\rm SO}_{m+1}(R[X],I[X])]$. To prove that the length of solvability is 2, it is sufficient to show that $[\alpha(X),\beta(X)] \in {\rm EO}_{2m+1}(R[X],I[X])$. Without loss of generality, suppose $\alpha(0)=\beta(0)=Id$.  Now define $\gamma(X,T):= [\alpha(XT),\beta(X)]$. Then  $\gamma(X,0)=Id$ and for any  maximal ideal $\mathfrak{m}$ of $R[X]$ we have $\gamma(X,T)_{\mathfrak{m}} = [\alpha(X,T)_{\mathfrak{m}}, \beta(X)_{\mathfrak{m}}]$. Moreover, we have  $ \beta(X)_{\mathfrak{m}}\in [{\rm SO}_{2m+1}(R[X]_{\mathfrak{m}},\; I[X]_{\mathfrak{m}}),{\rm SO}_{2m+1}(R[X]_{\mathfrak{m}},I[X]_{\mathfrak{m}})] \subseteq {\rm EO}_{2m+1}(R[X]_{\mathfrak{m}},I[X]_{\mathfrak{m}})$. Thus, by the  normality of relative elementary orthogonal group ${\rm EO}_{2m+1}(R[X]_{\mathfrak{m}}(T),I[X]_{\mathfrak{m}}[T])$ in ${\rm SO}_{2m+1}(R[X]_{\mathfrak{m}}[T],I[X]_{\mathfrak{m}}[T])$, we get $ \gamma(X,T)_{\mathfrak{m}} \in {\rm EO}_{2m+1}(R[X]_{\mathfrak{m}}[T],I[X]_{\mathfrak{m}}[T])$. By Lemma \ref{RLG}, the commutator $\gamma(X,T)\in {\rm EO}_{2m+1}(R[X,T],I[X,T])$.  Then, by putting $T=1$, $\gamma(X,1)=[\alpha(X),\beta(X)] \in \; {\rm EO}_{2m+1}(R[X],I[X])$.
  
	\end{proof}

    \vspace{2mm}
    
    Now we generalize these results to prove the solvability of ${\rm SK_1}$-analog of the DSER elementary orthogonal group.

    \vspace{2mm}

	{\bf Proof of theorem \ref{DSERSolvability}.}
	
	\begin{proof}
 
		Let $\alpha(X), \beta(X) \in [{\rm SO}_{(R[X],(X))}(M[X],XM[X]),{\rm SO}_{(R[X],(X))}(M[X],XM[X])]$ and define the commutator $\gamma(X):=[\alpha(X), \beta(X)]$. To prove the length of solvability of $\frac{{\rm SO}_{(R[X],(X))}(M[X],XM[X])}{{\rm EO}_{(R[X],(X))}(Q[X],\mathbb{H}(R[X])^m)}$ is at most 2, it is sufficient to prove that the commutator $\gamma(X)\in {\rm EO}_{(R[X],(X))}(Q[X],\mathbb{H}(R[X])^m)$. Let $\mathfrak{m} $ be any maximal ideal of $R$. Since finitely generated projective modules are locally free, after localization, we have  $\alpha(X)_{\mathfrak{m}}, \beta(X)_{\mathfrak{m}} \in [{\rm SO}_{n+2m}(R_{\mathfrak{m}}[X],(X)_{\mathfrak{m}}),{\rm SO}_{n+2m}(R_{\mathfrak{m}}[X],(X)_{\mathfrak{m}})]$. Then by Theorem \ref{relativesolvabilityodd} and \cite[Theorem~4.12]{RRaoSS2017},  we get that after localization with respect to the maximal ideal $\mathfrak{M}$ of $R$, $\gamma(X)_{\mathfrak{m}} \in {\rm EO}_{n+2}(R_{\mathfrak{m}}[X],(X)_{\mathfrak{m}})={\rm EO}_{(R_{\mathfrak{m}}[X],(X)_{\mathfrak{m}})}(M_{\mathfrak{m}}[X])$. Now by Theorem \ref{RLG}, we obtain $\gamma(X) \in {\rm EO}_{(R[X],(X))}(Q[X],\mathbb{H}(R[X])^m)$.
  
	\end{proof}
	
	\begin{theorem}
		Let $(Q,q)$ be a quadratic $R$-space of rank $n \geq 1$ and $P$ be a finitely generated projective module of rank $m\geq2$. Let $I$ be an ideal of $R$ and $M=Q\perp \mathbb{H}(P)$. If for all maximal ideal $\mathfrak{m}$ of $R,\; q_{\mathfrak{m}}\perp h_{\mathfrak{m}}=\tilde{\phi}_{n+2m}$, where $h$ is the hyperbolic form on $\mathbb{H}(P)$, then the group $\frac{{\rm SO}_{(R[X],XI[X])}(M[X],XIM[X])}{{\rm EO}_{(R[X],XI[X])}(Q[X],\mathbb{H}(R[X])^m)}$ is solvable of length at most $2$.
	\end{theorem}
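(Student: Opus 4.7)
The plan is to proceed along the same lines as the proof of Theorem~\ref{DSERSolvability}, with the ideal $(X)$ replaced throughout by $XI[X]$. Let $\alpha(X), \beta(X) \in [{\rm SO}_{(R[X], XI[X])}(M[X], XIM[X]), {\rm SO}_{(R[X], XI[X])}(M[X], XIM[X])]$ and set $\gamma(X) := [\alpha(X), \beta(X)]$. To bound the solvability length by $2$ it suffices to show $\gamma(X) \in {\rm EO}_{(R[X], XI[X])}(Q[X], \mathbb{H}(R[X])^m)$. Since every element of ${\rm SO}_{(R[X], XI[X])}(M[X], XIM[X])$ is congruent to the identity modulo $XI[X]$, the specializations $\alpha(0) = \beta(0) = Id$ hold automatically, so $\gamma(0) = Id$.

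For each maximal ideal $\mathfrak{m}$ of $R$, the modules $Q_{\mathfrak{m}}$ and $P_{\mathfrak{m}}$ are free, and the hypothesis $q_{\mathfrak{m}} \perp h_{\mathfrak{m}} = \tilde{\phi}_{n+2m}$ together with Lemma~\ref{freecase} identifies ${\rm EO}_{(R_{\mathfrak{m}}[X], XI_{\mathfrak{m}}[X])}(Q_{\mathfrak{m}}[X], \mathbb{H}(R_{\mathfrak{m}}[X])^m)$ with the classical relative elementary orthogonal group ${\rm EO}_{n+2m}(R_{\mathfrak{m}}[X], XI_{\mathfrak{m}}[X])$; under this identification $\alpha_{\mathfrak{m}}, \beta_{\mathfrak{m}}$ are commutators in ${\rm SO}_{n+2m}(R_{\mathfrak{m}}[X], XI_{\mathfrak{m}}[X])$. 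I then invoke the following local assertion: whenever $R'$ is a local ring and $J \subseteq R'$ an ideal, commutators in ${\rm SO}_{n+2m}(R'[X], XJ[X])$ lie in ${\rm EO}_{n+2m}(R'[X], XJ[X])$. Granted this with $R' = R_{\mathfrak{m}}$ and $J = I_{\mathfrak{m}}$, we conclude $\gamma(X)_{\mathfrak{m}} \in {\rm EO}_{n+2m}(R_{\mathfrak{m}}[X], XI_{\mathfrak{m}}[X])$ for every $\mathfrak{m}$, and Theorem~\ref{RLG} yields the desired conclusion $\gamma(X) \in {\rm EO}_{(R[X], XI[X])}(Q[X], \mathbb{H}(R[X])^m)$.

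The local assertion is established by the same device used in Theorem~\ref{relativesolvabilityodd} (supplemented by the even analog of \cite[Theorem~4.12]{RRaoSS2017}): given commutators $\alpha', \beta'$ in ${\rm SO}_{n+2m}(R'[X], XJ[X])$, one introduces the homotopy $\tilde{\gamma}(X, T) = [\alpha'(XT), \beta'(X)]$ satisfying $\tilde{\gamma}(X, 0) = Id$; localizes at any maximal ideal $\mathfrak{M}$ of $R'[X]$ so that $R'[X]_{\mathfrak{M}}$ is local and Corollary~\ref{Abeliangrouprelative} forces $\beta'(X)_{\mathfrak{M}}$ into the relative elementary group attached to the ideal $XJ[X]_{\mathfrak{M}}$; uses normality (Theorem~\ref{relativenormality}) to conclude that $\tilde{\gamma}(X, T)_{\mathfrak{M}}$ already lies in the corresponding relative elementary group; applies the local-global principle over the base $R'[X]$ with auxiliary variable $T$; and finally specializes $T = 1$.

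The principal subtlety, shared with Theorem~\ref{DSERSolvability}, is that the ideals $XI[X]$, $XI_{\mathfrak{m}}[X]$, $XJ[X]_{\mathfrak{M}}$ that arise are not of the form $K[X]$ for an ideal $K$ of the respective base ring, so both applications of the local-global principle extend Theorem~\ref{RLG} slightly beyond its literal statement. This extension is harmless because the dilation-principle argument of Theorem~\ref{dilation}, together with the decomposition of Lemma~\ref{decomposition1}, goes through unchanged for any ideal of $R[X]$ that is preserved under localization; flagging this generalization is the only nontrivial point that the write-up must address.
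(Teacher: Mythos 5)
Your proposal follows essentially the same route as the paper: localize at the maximal ideals of $R$ to reduce to the free classical case, invoke the local solvability results (Theorem~\ref{relativesolvabilityodd} for the odd case and its even analog from \cite{RRaoSS2017}) to place $\gamma(X)_{\mathfrak{m}}$ in the relative elementary group, and then patch with the relative local-global principle of Theorem~\ref{RLG}. You are in fact more careful than the paper, which silently applies Theorem~\ref{relativesolvabilityodd} and Theorem~\ref{RLG} with the ideal $XI[X]$ even though both are stated only for ideals of the form $I[X]$; the extension you flag is precisely the point the paper's own proof leaves unaddressed.
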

	
	\begin{proof}
 
		Let $\alpha(X), \beta(X) \in [{\rm SO}_{(R[X],XI[X])}(M[X],XIM[X]),{\rm SO}_{(R[X],XI[X])}(M[X],XIM[X])]$. To prove that the quotient group $\frac{{\rm SO}_{(R[X],XI[X])}(M[X],XIM[X])}{{\rm EO}_{(R[X],XI[X])}(Q[X],\mathbb{H}(R[X])^m)}$ is solvable of length at most 2, it is sufficient to prove that the commutator $\gamma(X):=[\alpha(X), \beta(X)] \in {\rm EO}_{(R[X],XI[X])}(Q[X],\mathbb{H}(R[X])^m)$. Let $\mathfrak{m} $ be any maximal ideal of $R$. Then the elements $\alpha(X)_{\mathfrak{m}}, \beta(X)_{\mathfrak{m}} \in [{\rm SO}_{n+2m}(R_{\mathfrak{m}}[X]),{\rm SO}_{n+2m}(R_{\mathfrak{m}}[X])]$. Now by Theorem \ref{relativesolvabilityodd} and \cite[Theorem~4.12]{RRaoSS2017}, we have  $\gamma(X)_{\mathfrak{m}} \in {\rm EO}_{n+2}(R_{\mathfrak{m}}[X],XI[X]_{\mathfrak{m}})={\rm EO}_{(R_{\mathfrak{m}}[X],I_{\mathfrak{m}}[X])}(M_{\mathfrak{m}}[X])$. By applying Theorem \ref{RLG}, we get $\gamma(X) \in {\rm EO}_{(R[X],XI[X])}(Q[X],\mathbb{H}(R[X])^m)$.
  
	\end{proof}

\begin{lemma}
	Let $R$ be a local ring. Let $m\geq 2$ and let $\alpha(X),\beta(X) \in {\rm SO}_{2m+1}(R[X])$ be such that $\alpha(0)=Id$. Then the commutator $[\alpha(X),\beta(X)^2] \in {\rm EO}_{2m+1}(R[X])$. 
\end{lemma}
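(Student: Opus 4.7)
The plan is to mirror the template used in the proof of Theorem \ref{SK_1analogfree}: introduce an auxiliary variable $T$, build a path that is the identity at $T=0$ and the desired commutator at $T=1$, verify elementarity locally, and then globalize via the local-global principle (Lemma \ref{ALG}).

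First I will set
\[
\gamma(X,T) := [\alpha(TX),\, \beta(X)^{2}] \in {\rm SO}_{2m+1}(R[X,T]).
\]
Because $\alpha(0)=Id$ by hypothesis, we immediately have $\gamma(X,0)=Id$, which is the boundary condition required to apply Lemma \ref{ALG}.

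Next I will invoke Lemma \ref{ALG} with the base ring taken to be $R[X]$ and the distinguished polynomial variable taken to be $T$. Fix a maximal ideal $\mathfrak{M}$ of $R[X]$; then $R[X]_{\mathfrak{M}}$ is local, so Theorem \ref{squareinelementary} (applied with the ideal taken to be the whole ring) gives
\[
\beta(X)_{\mathfrak{M}}^{\,2} \in {\rm EO}_{2m+1}(R[X]_{\mathfrak{M}}) \subseteq {\rm EO}_{2m+1}(R[X]_{\mathfrak{M}}[T]).
\]
Since $m\geq 2$, Theorem \ref{NormalityAbsolute} provides normality of the elementary orthogonal subgroup inside the full orthogonal group over any base ring; conjugating $\beta(X)_{\mathfrak{M}}^{\,2}$ by $\alpha(TX)_{\mathfrak{M}}$ therefore remains in ${\rm EO}_{2m+1}(R[X]_{\mathfrak{M}}[T])$, and multiplying by $\beta(X)_{\mathfrak{M}}^{-2}$ keeps us there, so $\gamma(X,T)_{\mathfrak{M}} \in {\rm EO}_{2m+1}(R[X]_{\mathfrak{M}}[T])$.

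With the local hypothesis verified and $\gamma(X,0)=Id$, Lemma \ref{ALG} yields $\gamma(X,T) \in {\rm EO}_{2m+1}(R[X,T])$. Specializing $T=1$ gives $[\alpha(X), \beta(X)^{2}] \in {\rm EO}_{2m+1}(R[X])$, as required. The only administrative point I foresee is verifying that Lemma \ref{ALG} is genuinely available with $R[X]$ in place of $R$: this is fine because $R[X]$ is Noetherian, and the rank-$(2m+1)$ free quadratic module with standard form $\tilde{\phi}_{2m+1}$ keeps its standard shape under any localization at a maximal ideal of $R[X]$. The crucial input is Theorem \ref{squareinelementary}, which supplies the elementarity of $\beta(X)_{\mathfrak{M}}^{\,2}$ at each local ring; every other step is either a commutator bookkeeping argument or an appeal to normality.
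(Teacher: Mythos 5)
Your proposal is correct and follows essentially the same route as the paper: define $\gamma(X,T)=[\alpha(TX),\beta(X)^2]$, note $\gamma(X,0)=Id$, use Theorem~\ref{squareinelementary} at each localization $R[X]_{\mathfrak{M}}$ to place $\beta(X)_{\mathfrak{M}}^{2}$ in the elementary subgroup, invoke normality to get $\gamma(X,T)_{\mathfrak{M}}$ elementary, and conclude by Lemma~\ref{ALG} and the specialization $T=1$. Your extra remark verifying that the local-global principle applies with base ring $R[X]$ and variable $T$ is a point the paper leaves implicit, but the argument is the same.
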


\begin{proof}

	Define $\gamma(X,T):=[\alpha(TX),\beta(X)^2]$. Then clearly $\gamma(X,0)=Id$. Then for any maximal ideal $\mathfrak{m}$ of $R[X],\; \gamma(X,T)_{\mathfrak{m}}=[\alpha(TX)_{\mathfrak{m}},\beta(X)_{\mathfrak{m}}]$, whereas $\beta(X)_{\mathfrak{m}} \in {\rm SO}_{2m+1}(R[X]_{\mathfrak{m}})$. Then by Lemma \ref{squareinelementary}, $\beta(X)_{\mathfrak{m}}^2 \in {\rm EO}_{2m+1}(R[X]_{\mathfrak{m}})  \subseteq {\rm EO}_{2m+1}(R[X]_{\mathfrak{m}}[T])$. Now by normality of the elementary subgroup ${\rm EO}_{2m+1}(R[X]_{\mathfrak{m}}[T]) $ in ${\rm O}_{2m+1}(R[X]_{\mathfrak{m}}[T])$, we have $ \gamma(X,T)_{\mathfrak{m}} \in {\rm EO}_{2m+1}(R[X]_{\mathfrak{m}}[T])$. Now by Lemma \ref{ALG}, we have $\gamma(X,T) \in {\rm EO}_{2m+1}(R[X,T])$. In particular, for $T=1$, we get $\gamma(X,1)=[\alpha(X),\beta(X)^2] \in {\rm EO}_{2m+1}(R[X])$.
 
\end{proof}

Next, we study the nilpotency of the quotient group $\frac{{\rm SO}_{2m+1}(R[X],(X))}{{\rm EO}_{2m+1}(R[X],(X))}$.
\begin{theorem}\label{nilpotency}
	Let $R$ be a local ring and $m\geq 2$. Then the nilpotency class of the quotient group $\frac{{\rm SO}_{2m+1}(R[X],(X))}{{\rm EO}_{2m+1}(R[X],(X))}$ is at most 2.
\end{theorem}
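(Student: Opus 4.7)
The plan is to show that for arbitrary $\alpha, \beta, \gamma \in {\rm SO}_{2m+1}(R[X],(X))$, the iterated commutator $[[\alpha,\beta],\gamma]$ lies in ${\rm EO}_{2m+1}(R[X],(X))$; this forces the second term of the lower central series of the quotient to vanish, giving nilpotency class at most two. The strategy combines the substitution-plus-local-global template of Theorem~\ref{SK_1analogfree} and Theorem~\ref{relativesolvabilityodd} with the local abelianness supplied by Corollary~\ref{Abeliangrouprelative}.

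First, I would introduce an auxiliary polynomial variable $T$ and set
$$\xi(X,T) := [[\alpha(X),\beta(X)],\gamma(TX)].$$
Since $\gamma(0)=Id$, we obtain $\xi(X,0)=Id$, putting $\xi$ in the setting required by the relative local-global principle (Theorem~\ref{RLG}) applied with $R[X]$ in the role of base ring, $T$ as the polynomial variable, and $(X)\subset R[X]$ as the ideal; the legitimacy of this viewpoint in the classical free case rests on Lemma~\ref{freecase}, which identifies the classical ${\rm EO}_{2m+1}$ with the DSER elementary group for a free quadratic space of rank one.

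Next, for each maximal ideal $\mathfrak{n}$ of $R[X]$, the localization $R[X]_{\mathfrak{n}}$ is a local ring, so Corollary~\ref{Abeliangrouprelative} asserts that every element of $\frac{{\rm SO}_{2m+1}(R[X]_{\mathfrak{n}},(X)_{\mathfrak{n}})}{{\rm EO}_{2m+1}(R[X]_{\mathfrak{n}},(X)_{\mathfrak{n}})}$ has order two, and a group of exponent two is automatically abelian. Consequently $[\alpha_{\mathfrak{n}},\beta_{\mathfrak{n}}]\in {\rm EO}_{2m+1}(R[X]_{\mathfrak{n}},(X)_{\mathfrak{n}})\subseteq {\rm EO}_{2m+1}(R[X]_{\mathfrak{n}}[T],(X)_{\mathfrak{n}}[T])$. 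The normality of this relative elementary subgroup in the ambient orthogonal group over $R[X]_{\mathfrak{n}}[T]$ (Theorem~\ref{relativenormality}, applied in the free case via Lemma~\ref{freecase}) then forces
$$\xi_{\mathfrak{n}}(X,T)=[[\alpha_{\mathfrak{n}},\beta_{\mathfrak{n}}],\gamma(TX)_{\mathfrak{n}}]\in {\rm EO}_{2m+1}(R[X]_{\mathfrak{n}}[T],(X)_{\mathfrak{n}}[T]).$$

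Finally, Theorem~\ref{RLG} produces $\xi(X,T)\in {\rm EO}_{2m+1}(R[X,T],(X))$, and specializing $T=1$ yields the desired membership $[[\alpha,\beta],\gamma]\in {\rm EO}_{2m+1}(R[X],(X))$. The step I expect to require most attention is the invocation of Theorem~\ref{RLG} with $R[X]$ (rather than $R$) playing the role of base ring: one must confirm that the free rank hypotheses ($n=1$, $m\geq 2$) and the standardness of the hyperbolic form persist under this change of base, which is exactly what Lemma~\ref{freecase} secures. The remaining steps are direct adaptations of arguments already carried out in this section.
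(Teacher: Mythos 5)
Your proposal is correct and follows essentially the same substitution-plus-local-global template as the paper's own proof: introduce a dummy variable $T$, use the local abelianness of the relative quotient over $R[X]_{\mathfrak{n}}$ (Corollary~\ref{Abeliangrouprelative}) to make the inner commutator locally elementary, push the outer commutator into the elementary subgroup by normality, globalize via the local-global principle, and specialize at $T=1$. The only differences are cosmetic: the paper attaches $T$ to the first factor, working with $[\alpha(XT),\beta(X)]$ for $\beta$ in the derived subgroup and then descending from ${\rm EO}_{2m+1}(R[X])$ to the relative group at the very end, whereas you attach $T$ to the outer element of a triple commutator and invoke the relative principle (Theorem~\ref{RLG}) with the ideal $(X)$ directly, which lands in ${\rm EO}_{2m+1}(R[X],(X))$ in one step.
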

\begin{proof}

Let $\alpha(X)\in {\rm SO}_{2m+1}(R[X],(X))$. Then there exists $\delta(X)\in {\rm M}_{2m+1}(R[X])$ such that $\alpha(X)= Id +X\delta(X)$ . Therefore $\alpha(0)=Id$. For $\beta(X) \in [{\rm SO}_{2m+1}(R[X],(X)),{\rm SO}_{2m+1}(R[X],(X))]$, define the commutator $\gamma(X,T):=[\alpha(XT),\beta(X)]$. Clearly $\gamma(X,0)=Id$. Then for any maximal ideal $\mathfrak{m}$ of $R[X]$, we have $ \gamma(X,T)_{\mathfrak{m}}=[\alpha(XT)_{\mathfrak{m}},\beta(X)_{\mathfrak{m}}]$. But after localization, $\beta(X)_{\mathfrak{m}}\in [{\rm SO}_{2m+1}(R[X]_{\mathfrak{m}},(X)_{\mathfrak{m}}),{\rm SO}_{2m+1}(R[X]_{\mathfrak{m}},(X)_{\mathfrak{m}})]\subseteq {\rm EO}_{2m+1}(R[X]_{\mathfrak{m}}) \subseteq{\rm EO}_{2m+1}(R[X]_{\mathfrak{m}}[T])$. By normality of ${\rm EO}_{2m+1}(R[X]_{\mathfrak{m}}[T])$ in  ${\rm SO}_{2m+1}(R[X]_{\mathfrak{m}}[T])$ from Theorem \ref{relativenormality}, we have $\gamma(X,T)_{\mathfrak{m}} \in {\rm EO}_{2m+1}(R[X]_{\mathfrak{m}}[T])$. Now by Lemma \ref{RLG}, we get $\gamma(X,T) \in {\rm EO}_{2m+1}(R[X,T])$. In particular for $T=1,\;\gamma(X,1)=[\alpha(X),\beta(X)] \in {\rm EO}_{2m+1}(R[X]).$	Since $\alpha(0)=Id$, we have $\gamma(0,1)=[\alpha(0),\beta(0)]=Id$. Therefore, we get $[\alpha(X),\beta(X)] \in {\rm EO}_{2m+1}(R[X],(X))$. 

\end{proof}

Now we generalize the theorem to prove that the nilpotency class of the quotient group $\frac{{\rm SO}_{(R[X],(X))}(M[X],XM[X])}{{\rm EO}_{(R[X],(X))}(Q[X],\mathbb{H}(P[X]))}$ is at most 2.

\begin{theorem}\label{DSERnilpotencycase1}
	Let $(Q,q)$ be a quadratic $R$-space of rank $n\geq 1.$ Let $M=Q\perp \mathbb{H}(R)^m$ for $m\geq 2$. If for all maximal ideal $\mathfrak{m}$ of $R,\; q_{\mathfrak{m}}\perp h_{\mathfrak{m}}=\tilde{\phi}_{n+2m}$, where $h$ is the hyperbolic form on $\mathbb{H}(R)^m$, the nilpotency class 
 of the group $\frac{{\rm SO}_{(R[X],(X))}(M[X])}{{\rm EO}_{(R[X],(X))}(M[X])}$ at most 2.
\end{theorem}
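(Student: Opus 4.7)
The strategy is to mimic the proof of Theorem~\ref{DSERSolvability}, replacing the local solvability input with the corresponding local nilpotency input from Theorem~\ref{nilpotency}, and conclude via the relative local-global principle (Theorem~\ref{RLG}). Specifically, take $\alpha(X) \in {\rm SO}_{(R[X],(X))}(M[X], XM[X])$ and $\beta(X) \in [{\rm SO}_{(R[X],(X))}(M[X], XM[X]), {\rm SO}_{(R[X],(X))}(M[X], XM[X])]$; to establish that the nilpotency class is at most $2$, it suffices (by the definition in terms of $G^{(2)} = [G, [G,G]] = \{e\}$) to show that the commutator $\gamma(X) := [\alpha(X), \beta(X)]$ lies in ${\rm EO}_{(R[X],(X))}(Q[X], \mathbb{H}(R[X])^m)$. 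Since $\alpha(0) = \beta(0) = Id$ (as both belong to the relative group with respect to the principal ideal $(X)$), we have $\gamma(0) = Id$, which supplies the vanishing hypothesis required by Theorem~\ref{RLG}.

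For each maximal ideal $\mathfrak{m}$ of $R$, localization reduces the situation to the free case. The hypothesis $q_\mathfrak{m} \perp h_\mathfrak{m} = \widetilde{\phi}_{n+2m}$ together with Lemma~\ref{freecase} yields the identification
\[
{\rm EO}_{(R_\mathfrak{m}[X],(X)_\mathfrak{m})}(Q_\mathfrak{m}[X], \mathbb{H}(R_\mathfrak{m}[X])^m) = {\rm EO}_{n+2m}(R_\mathfrak{m}[X], (X)_\mathfrak{m}).
\]
The localized elements $\alpha(X)_\mathfrak{m}, \beta(X)_\mathfrak{m}$ lie in ${\rm SO}_{n+2m}(R_\mathfrak{m}[X], (X)_\mathfrak{m})$, and since localization is a ring homomorphism, $\beta(X)_\mathfrak{m}$ remains in the commutator subgroup. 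Because $R_\mathfrak{m}$ is local, one may apply Theorem~\ref{nilpotency} when $n+2m$ is odd, or its even-dimensional analogue (established in the same fashion, in parallel with \cite[Theorem~4.12]{RRaoSS2017}, which was used for the solvability version) when $n+2m$ is even. In either case we conclude $\gamma(X)_\mathfrak{m} \in {\rm EO}_{n+2m}(R_\mathfrak{m}[X], (X)_\mathfrak{m}) = {\rm EO}_{(R_\mathfrak{m}[X],(X)_\mathfrak{m})}(Q_\mathfrak{m}[X], \mathbb{H}(R_\mathfrak{m}[X])^m)$.

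With the localized hypothesis verified for every maximal ideal $\mathfrak{m}$ of $R$ and $\gamma(0) = Id$, Theorem~\ref{RLG} delivers $\gamma(X) \in {\rm EO}_{(R[X], (X))}(Q[X], \mathbb{H}(R[X])^m)$, which shows that the second term of the upper central sequence of the quotient group is trivial; hence the nilpotency class is at most $2$. The only delicate point is the parity split, since Theorem~\ref{nilpotency} is stated only for the odd case; once one observes that its even-dimensional companion is available (either from the existing literature or by an entirely parallel argument using the standard squaring trick as in Theorem~\ref{squareinelementary}), the rest of the proof is a direct transcription of the localization-and-glue template already employed for Theorem~\ref{DSERSolvability}.
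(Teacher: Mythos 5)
Your proposal is correct and follows essentially the same route as the paper: localize at each maximal ideal, use the free-case identification to reduce to the classical relative orthogonal group, invoke the local nilpotency result (Theorem~\ref{nilpotency}), and glue via the relative local-global principle (Theorem~\ref{RLG}). You are in fact slightly more careful than the paper's own write-up in flagging the parity of $n+2m$ and the need for the even-dimensional companion of Theorem~\ref{nilpotency}, a point the paper handles explicitly only in the solvability proof.
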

\begin{proof}
	Let $\alpha(X)\in {\rm SO}_{(R[X],(X))}(M[X]).$ Then $\alpha(0)=Id.$ Define $\gamma(X):=[\alpha(X),\beta(X)]$ for $\beta(X) \in [{\rm SO}_{(R[X],(X))}(M[X]),{\rm SO}_{(R[X],(X))}(M[X]).$  Clearly $\gamma(0)=Id.$ Then for any maximal ideal $\mathfrak{m}$ of $R$, we have, $\beta(X)_{\mathfrak{m}}\in [{\rm SO}_{n+2m}(R_{\mathfrak{m}}[X],(X)_{\mathfrak{m}}),{\rm SO}_{n+2m}(R_{\mathfrak{m}}[X],(X)_{\mathfrak{m}})]$. Thus by Theorem \ref{nilpotency}, we get $\gamma(X)_{\mathfrak{m}}=[\alpha(X)_{\mathfrak{m}},\beta(X)_{\mathfrak{m}}] \in {\rm EO}_{n+2}(R_{\mathfrak{m}}[X],(X)_{\mathfrak{m}})= {\rm EO}_{(R_{\mathfrak{m}}[X],(X)_{\mathfrak{m}})}(M[X]).$  Now by Lemma \ref{RLG}, we have $\gamma(X) \in {\rm EO}_{(R[X],(X))}(M_{\mathfrak{m}}[X])$. 
\end{proof}
 Now Theorem \ref{DSERNilpotency} follows naturally. Proof works same as in \ref{DSERnilpotencycase1}.

\section{Commutativity principle for odd elementary orthogonal group}

In \cite{RRaoSS2022}, S. Sharma and R. A. Rao studied the commutativity principle for even dimensional orthogonal group. The principle states that, over a local ring, the even orthogonal commutator is in the elementary orthogonal group. They extended these results over an arbitrary ring by proving that the commutator of an even orthogonal matrix and an orthogonal matrix homotopic to identity is in the elementary orthogonal group. Here we prove analogous results for odd orthogonal group.

\begin{lemma}\label{LGPEqui}
	Let $(Q,q)$ be a non-degenerate quadratic $R$-space of rank $n\geq 1$ and $P$ be a finitely generated projective $R$-module of rank $m\geq 2$. Let $M=Q\perp \mathbb{H}(P)$. Assume that, for  $\alpha(X) \in {\rm O}_{R[X]}(M[X])$, we have $\alpha_{\mathfrak{m}}(X)\in {\rm O}_{R_{\mathfrak{m}}}(M_{\mathfrak{m}})\cdot {\rm EO}_{R_{\mathfrak{m}}[X]}(M_{\mathfrak{m}}[X])$,  for each maximal ideal $\mathfrak{m}$ of $R$. Then $\alpha(X) \in {\rm O}_{R}(M)\cdot{\rm EO}_{R[X]}(M[X])$. In particular, if $\alpha(0)\in {\rm EO}_{R}(Q,\mathbb{H}(P))$, then $\alpha(X) \in {\rm EO}_{R[X]}(M[X])$. 
\end{lemma}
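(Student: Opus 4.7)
The plan is to apply the absolute local--global principle for the DSER elementary orthogonal group (Lemma~\ref{ALG}) after absorbing the value $\alpha(0)$ into a constant prefactor. Specifically, I would introduce the normalized element
\[
\beta(X) := \alpha(0)^{-1}\alpha(X),
\]
which lies in ${\rm O}_{R[X]}(M[X])$ and satisfies $\beta(0) = Id$. Since $\alpha(0) \in {\rm O}_R(M) \subseteq {\rm O}_{R[X]}(M[X])$ as a constant, the whole problem reduces to showing that $\beta(X) \in {\rm EO}_{R[X]}(M[X])$.

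To invoke Lemma~\ref{ALG}, I need to verify that $\beta_{\mathfrak{m}}(X)$ lies in the elementary group for every maximal ideal $\mathfrak{m}$ of $R$. Fixing such an $\mathfrak{m}$, the hypothesis provides a decomposition $\alpha_{\mathfrak{m}}(X) = \sigma_{\mathfrak{m}}\,\epsilon_{\mathfrak{m}}(X)$ with $\sigma_{\mathfrak{m}} \in {\rm O}_{R_{\mathfrak{m}}}(M_{\mathfrak{m}})$ and $\epsilon_{\mathfrak{m}}(X) \in {\rm EO}_{R_{\mathfrak{m}}[X]}(M_{\mathfrak{m}}[X])$. Specialising to $X=0$ yields $\alpha_{\mathfrak{m}}(0) = \sigma_{\mathfrak{m}}\epsilon_{\mathfrak{m}}(0)$, and consequently
\[
\beta_{\mathfrak{m}}(X) = \alpha_{\mathfrak{m}}(0)^{-1}\alpha_{\mathfrak{m}}(X) = \epsilon_{\mathfrak{m}}(0)^{-1}\epsilon_{\mathfrak{m}}(X) \in {\rm EO}_{R_{\mathfrak{m}}[X]}(M_{\mathfrak{m}}[X]).
\]
The structural condition in Lemma~\ref{ALG} requiring $M_{\mathfrak{m}} \cong R_{\mathfrak{m}}^{n+2m}$ with bilinear form $q_{\mathfrak{m}}\perp\widetilde{\phi}_{2m}$ is automatic: since $P$ is finitely generated projective, $P_{\mathfrak{m}}$ is free of rank $m$, and with respect to a basis of $P_{\mathfrak{m}}$ together with its dual basis the hyperbolic form on $\mathbb{H}(P_{\mathfrak{m}})$ is precisely $\widetilde{\phi}_{2m}$.

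Applying Lemma~\ref{ALG} then yields $\beta(X) \in {\rm EO}_{R[X]}(M[X])$, and hence $\alpha(X) = \alpha(0)\,\beta(X) \in {\rm O}_R(M)\cdot {\rm EO}_{R[X]}(M[X])$, establishing the first assertion. The ``in particular'' clause follows at once: if $\alpha(0)$ itself lies in ${\rm EO}_R(Q,\mathbb{H}(P))$, then viewed as a constant in ${\rm EO}_{R[X]}(M[X])$ it combines with $\beta(X)$ to place $\alpha(X)$ in the elementary group. The only point requiring care is the normalization trick at $X=0$; after that, there is no serious obstacle, and the result is a clean consequence of the already-established local--global principle.
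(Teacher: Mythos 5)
Your proof is correct, but it takes a different route from the paper for the main assertion. The paper does not reprove the first statement at all: it simply cites \cite[Theorem~5.1]{AARR2014} for the claim that $\alpha(X)\in {\rm O}_R(M)\cdot{\rm EO}_{R[X]}(M[X])$, and then devotes its short argument only to the ``in particular'' clause, writing $\alpha(X)=\alpha'\cdot\alpha''(X)$ and observing that $\alpha'=\alpha(0)\alpha''(0)^{-1}$ must then be elementary. You instead derive the first assertion directly from the absolute local--global principle (Lemma~\ref{ALG}) already quoted in the paper, by normalizing to $\beta(X)=\alpha(0)^{-1}\alpha(X)$ and checking that the constant factor $\sigma_{\mathfrak m}$ cancels locally, so that $\beta_{\mathfrak m}(X)=\epsilon_{\mathfrak m}(0)^{-1}\epsilon_{\mathfrak m}(X)$ is elementary over every $R_{\mathfrak m}[X]$; your verification that the freeness/form hypothesis of Lemma~\ref{ALG} holds automatically after localization (dual basis gives $\widetilde{\phi}_{2m}$ for $\mathbb{H}(P_{\mathfrak m})$, and $Q_{\mathfrak m}$ is free, indeed diagonalizable since $2$ is a unit in the local ring) is the right point to address and is sound. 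What your approach buys is self-containedness: the lemma becomes a clean corollary of Lemma~\ref{ALG} with an explicit decomposition $\alpha(X)=\alpha(0)\cdot\beta(X)$, which also makes the ``in particular'' clause immediate. What the paper's citation buys is brevity and reliance on a result proved in the precise projective-module setting of \cite{AARR2014}. Both arguments agree on the final step.
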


\begin{proof}

	First part of the lemma is proved in \cite[Theorem~5.1]{AARR2014}. In particular, suppose  $\alpha(0) \in  {\rm EO}_{R}(Q,\mathbb{H}(P))$ for $\alpha(X)\in {\rm O}_{R}(M)\cdot{\rm EO}_{(R[X])}(M[X])$. Then $\alpha(X)=\alpha'\cdot\alpha''(X)$ for some $\alpha' \in {\rm O}_{R}(M)$ and $\alpha''(X) \in {\rm EO}_{R[X]}(M[X])$. But $\alpha(0)=\alpha'\cdot\alpha''(0) \in {\rm EO}_{R}(M)$. Thus $\alpha' \in {\rm EO}_{R}(Q,\mathbb{H}(P))$ which implies that $\alpha(X)\in {\rm EO}_{(R[X])}(M[X])$.
 
\end{proof}
\begin{theorem}
	Let $R$ be a local ring. Then for $m\geq 3$, we have 
 $$[{\rm O}_{2m-1}(R[X]),{\rm O}_{2m-1}(R[X])]  \perp I_2 \subseteq {\rm EO}_{2m+1}(R[X]).$$
 
\end{theorem}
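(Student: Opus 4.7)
The plan is to reduce the claim over $R[X]$ to its value at $X=0$ via Lemma~\ref{LGPEqui}, and then handle the resulting over-$R$ commutator using the splitting lemma together with an exponent-$2$ argument for $\frac{{\rm O}_{2m+1}(R)}{{\rm EO}_{2m+1}(R)}$.

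Fixing $\alpha(X),\beta(X)\in {\rm O}_{2m-1}(R[X])$, set $\gamma(X):=[\alpha(X),\beta(X)]\perp I_2\in {\rm O}_{2m+1}(R[X])$. For the local ring $R$ the hypothesis of Lemma~\ref{LGPEqui} is automatic: the unique localization is $R$ itself and the decomposition $\gamma(X)\in {\rm O}_{2m+1}(R)\cdot {\rm EO}_{2m+1}(R[X])$ is supplied by \cite[Theorem~5.1]{AARR2014}, as used in the proof of that lemma. The ``in particular'' clause then reduces the problem to showing $\gamma(0)=[\alpha(0),\beta(0)]\perp I_2\in {\rm EO}_{2m+1}(R)$. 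For this, Lemma~\ref{splitting} (valid since $\dim\max(R)=0$ and $m-1\geq 2$) furnishes ${\rm O}_{2m-1}(R)={\rm EO}_{2m-1}(R)\cdot {\rm O}_R(\mathbb{H}(R)^{m-1})$, and writing $\alpha(0)=e_\alpha h_\alpha$, $\beta(0)=e_\beta h_\beta$ accordingly, the normality of ${\rm EO}_{2m-1}(R)$ in ${\rm O}_{2m-1}(R)$ (Theorem~\ref{NormalityAbsolute}, needs $m-1\geq 2$) produces $[\alpha(0),\beta(0)]=e\cdot [h_\alpha,h_\beta]$ for some $e\in {\rm EO}_{2m-1}(R)$. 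Since $e\perp I_2\in {\rm EO}_{2m+1}(R)$, it remains to prove $[h_\alpha\perp I_2,h_\beta\perp I_2]=[h_\alpha,h_\beta]\perp I_2\in {\rm EO}_{2m+1}(R)$.

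To close the argument I would prove that $\frac{{\rm O}_{2m+1}(R)}{{\rm EO}_{2m+1}(R)}$ has exponent~$2$, and hence is abelian. Lemma~\ref{Abeliangroup} gives abelianness for the ${\rm SO}$-quotient, and Theorem~\ref{squareinelementary} applied with $I=R$ upgrades this to exponent~$2$. To extend exponent~$2$ to the full orthogonal quotient, decompose any $\alpha\in {\rm O}_{2m+1}(R)$ via Lemma~\ref{splitting} as $\alpha=\alpha_e\alpha_s$ with $\alpha_s\in {\rm O}_R(\mathbb{H}(R))$; normality yields $\alpha^2\equiv\alpha_s^2\pmod{{\rm EO}_{2m+1}(R)}$. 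A rank-$2$ analysis of orthogonal matrices for the hyperbolic form gives $\alpha_s^2\in {\rm EO}_{2m+1}(R)$ in both cases: the $\det=+1$ case follows from the spinor-norm computation already used in the proof of Theorem~\ref{squareinelementary}, and the $\det=-1$ case follows from the fact that the anti-diagonal orthogonal matrices for the hyperbolic form are involutions, so $\alpha_s^2=I_2$. A group of exponent~$2$ is automatically abelian, so every commutator in ${\rm O}_{2m+1}(R)$ lies in ${\rm EO}_{2m+1}(R)$, completing the proof.

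The main obstacle is the extension of the exponent-$2$ property from the special orthogonal group to the full orthogonal group: the excision-ring lifting argument of Theorem~\ref{squareinelementary} is phrased only for ${\rm SO}$-elements, so the $\det=-1$ case of the splitting factor $\alpha_s\in {\rm O}_R(\mathbb{H}(R))$ needs a separate, elementary verification that $\alpha_s^2=I_2$, which in turn relies on the classification of rank-$2$ hyperbolic orthogonal matrices over a local ring with $2$ invertible.
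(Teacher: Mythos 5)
Your second half (reducing $[\alpha(0),\beta(0)]\perp I_2$ to the hyperbolic factors via Lemma~\ref{splitting} and then proving that $\frac{{\rm O}_{2m+1}(R)}{{\rm EO}_{2m+1}(R)}$ has exponent $2$) is workable, but the reduction that gets you there has a genuine gap. Over a local ring $R$, Lemma~\ref{LGPEqui} applied with base ring $R$ and variable $X$ is vacuous: the only localization is $R$ itself, so the hypothesis $\gamma(X)_{\mathfrak m}\in {\rm O}_{2m+1}(R_{\mathfrak m})\cdot{\rm EO}_{2m+1}(R_{\mathfrak m}[X])$ is literally the conclusion $\gamma(X)\in {\rm O}_{2m+1}(R)\cdot{\rm EO}_{2m+1}(R[X])$, and nothing makes it ``automatic''. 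The citation \cite[Theorem~5.1]{AARR2014} is the patching (local-to-global) statement used to prove Lemma~\ref{LGPEqui}; it does not supply the local decomposition. The assertion ${\rm O}_{2m+1}(R[X])={\rm O}_{2m+1}(R)\cdot{\rm EO}_{2m+1}(R[X])$ for an arbitrary local ring is an extendability statement of Bass--Quillen type that is exactly the hard content here (and is established in \cite{AARR2014} only for equicharacteristic regular local rings, a hypothesis absent from the theorem). So your step ``it remains to show $\gamma(0)\in{\rm EO}_{2m+1}(R)$'' is unjustified, and everything after it rests on this.

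The paper circumvents precisely this obstruction by introducing an auxiliary variable: it sets $\gamma(X,T):=[\alpha(TX)\perp I_2,\ \beta(X)\perp I_2]$ and runs Lemma~\ref{LGPEqui} over the base ring $R[X]$ in the variable $T$, where the local-global principle has real content. At each maximal ideal $\mathfrak m$ of $R[X]$ the ring $R[X]_{\mathfrak m}$ is local, so Lemma~\ref{splitting} gives $(\beta(X)\perp I_2)_{\mathfrak m}=(I_{2m-1}\perp\rho(X))\tau(X)$ with $\rho(X)$ an ${\rm O}_2$-block on the last two coordinates; since $I_{2m-1}\perp\rho(X)$ commutes with $\alpha(TX)_{\mathfrak m}\perp I_2$, normality of the elementary subgroup makes the localized commutator elementary, and the base case $\gamma(X,0)$ is handled by the same splitting applied to $\alpha(0)\perp I_2$ over $R$. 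To repair your argument you would have to prove the decomposition $\gamma(X)\in{\rm O}_{2m+1}(R)\cdot{\rm EO}_{2m+1}(R[X])$, which essentially forces the $T$-variable argument with maximal ideals of $R[X]$ anyway. Note also that once the commuting ${\rm O}_2$-block observation is available, your exponent-$2$ analysis of the full orthogonal quotient (including the $\det=-1$ involution case) becomes unnecessary.
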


\begin{proof}

	Let $\alpha(X), \beta(X) \in {\rm O}_{2m-1}(R[X])$. Define $\gamma(X,T):=[\alpha(TX)\perp I_2,\ \beta(X)\perp I_2]$. Then for every maximal ideal $\mathfrak{m}$ of $R[X],$ we have $\gamma(X,T)_\mathfrak{m}=[\alpha(TX)_\mathfrak{m}\perp I_2,\ \beta(X)_\mathfrak{m}\perp I_2]$. By Lemma \ref{splitting}, we get $(\beta(X)\perp I_2)_{\mathfrak{m}} = (I_{2m-1} \perp \rho(X))(\tau(X))$, for some $\rho(X) \in {\rm O}_2(R[X]_{\mathfrak{m}})$ and $\tau(X) \in {\rm EO}_{2m+1}(R[X]_{\mathfrak{m}}[T])$. Therefore, we get 
  $$\gamma(X,T)_{\mathfrak{m}} = (\alpha(TX)_\mathfrak{m}\perp I_2)(I_{2m-1} \perp \rho(X))\tau(X)(\alpha(TX)_\mathfrak{m}^{-1}\perp I_2)\tau(X)^{-1}(I_{2m-1}\perp \rho(X)^{-1}).$$
 Clearly, $\gamma(X,T)_{\mathfrak{m}} \in {\rm EO}_{R}(M_{\mathfrak{m}}[X])$. Now, consider $\alpha(0)\perp I_2 =(I_{2m-1}\perp \rho)\tau$, where $\rho \in {\rm O}_2{(R[X])}$ and $\tau \in {\rm EO_{2m+1}(R)}$. Then, by normality of the elementary orthogonal subgroup ${\rm EO}_{2m+1}(R[X])$ in ${\rm O}_{2m+1}(R[X])$, we have $$\gamma(X,0) = (\alpha(0)\perp I_2)(I_{2m-1} \perp \rho)\tau (\alpha(0)^{-1}\perp I_2)\tau^{-1}(I_{2m-1}\perp \rho^{-1})\in {\rm EO}_{2m+1}(R[X]).$$ On account of Lemma \ref{LGPEqui}, we get $\gamma(X,T) \in {\rm EO}_{2m+1}(R[X,T])$. In particular, for $T=1,$ we have, $[\alpha(X)\perp I_2,\beta(X)\perp I_2] \in {\rm EO}_{2m+1}(R[X])$.
 
\end{proof}

Now we prove the odd orthogonal group over an arbitrary commutative ring with $2$ is invertible.

{\bf Proof of theorem \ref{homotopytheorem}.}

\begin{proof}

	Let $\alpha \in {\rm SO}_{2m-1}(R)$ be homotopic to $Id$ and $\beta \in {\rm O}_{2m-1}(R)$. Then there exists $\alpha(X)\in  {\rm SO}_{2m-1}(R[X])$ such that $\alpha(0)=Id$ and $\alpha(1)=\alpha $. Define $\gamma(X):=[\alpha(X) \perp I_2,\beta \perp I_2]$. Then for all maximal ideal $\mathfrak{m}$ of $R$, $\gamma(X)_{\mathfrak{m}}=[\alpha(X)\perp I_2)_{\mathfrak{m}},(\beta\perp I_2)_{\mathfrak{m}}]$. By Lemma \ref{splitting}, we get $(\beta \perp I_2)_{\mathfrak{m}}=(I_{2m-1} \perp \rho)\tau$, for some $\rho \in {\rm O}_2(R_{\mathfrak{m}})$ and $\tau \in {\rm EO}_{2m-1}(R_{\mathfrak{m}})$. Thus $\gamma(X)_{\mathfrak{m}} \in {\rm EO}_{2m+1}(R_{\mathfrak{m}}[X])$. Also, we have  $\gamma(0)=Id$. Thus by Lemma \ref{ALG}, we get $\gamma(X) \in {\rm EO}_{2m+1}(R[X])$ and putting $X=1, \gamma(1) = [\alpha\perp I_2,\beta \perp I_2] \in {\rm EO}_{2m+1}(R)$.
 
\end{proof}

	\noindent \textbf{Acknowledgment:}
	The first author would like to thank KSCSTE Young Scientist Award Scheme (2021-KSYSA-RG), Govt. of Kerala and RUSA, Govt. of India,  for providing grant to support this work. She would also like to thank SERB for SURE grant (SUR/2022/004894). The second author would like to thank the Council of Scientific and Industrial Research (CSIR), India (09/0239(13173)/2022-EMR-I) for the research fellowship. The authors would like to thank Ms. Aparna Pradeep V. K. and Dr. S. Sharma for their valuable suggestions and discussions.

	\bibliographystyle{elsearticle-num}

\end{document}